\def\nc{\newcommand}
\def\ep{\epsilon}
\def\om{\omega}
 \def\Om{\Omega}
\def\ra{\rightarrow}
\nc\pa{\partial}
\nc\CC{\mathbb{C}}
\nc\RR{\mathbb{R}}
\nc\QQ{\mathbb{Q}}
\nc\ZZ{\mathbb{Z}}
\nc\NN{\mathbb{N}}
\nc\m[1]{\left| #1\right|}
\nc\norm[1]{\left\| #1\right\|}
\newtheorem{theorem}{Theorem}[section]
\newtheorem{lemma}[theorem]{Lemma}
\newtheorem{corollary}[theorem]{Corollary}
\newtheorem{proposition}[theorem]{Proposition}
\newtheorem{definition}[theorem]{Definition}% Use {\rm ...}
\newtheorem{remark}[theorem]{Remark}        % Use {\rm ...}
\numberwithin{equation}{section}
\begin{document}

\title[Quasilinear equations with source terms on  Carnot groups]
{Quasilinear equations with source terms on  Carnot groups}

\author[Nguyen Cong Phuc]{Nguyen Cong Phuc$^{*}$}
\address{Department of Mathematics, Louisiana State University, 303 Lockett Hall,
Baton Rouge, LA  70803, USA}
\email{pcnguyen@math.lsu.edu}

\author[Igor E. Verbitsky]{Igor E. Verbitsky$^{**}$}
\address{Department of Mathematics,
University of Missouri,
Columbia, MO 65211, USA}
\email{verbitskyi@missouri.edu}

\thanks{2010 Mathematics Subject Classification: Primary 35H20, Secondary 35A01, 20F18}
\thanks{$^*$Supported in part by NSF Grant DMS-0901083}
\thanks{$^{**}$Supported in part by NSF Grant DMS-0901550}

\begin{abstract} 
In this paper we give necessary and sufficient conditions for the existence of solutions 
to quasilinear equations of Lane--Emden type with measure data on a Carnot group $\mathbb G$ of  arbitrary step. The quasilinear part 
involves operators of the $p$-Laplacian type $\Delta_{\mathbb G,\,p}\,$, $1<p<\infty$. These results are based on 
new a priori estimates of solutions in terms of  nonlinear potentials of Th. Wolff's type. As a consequence, we characterize completely removable singularities, 
and prove a Liouville type theorem 
for supersolutions of quasilinear equations with source terms which has been 
known only for equations involving the sub-Laplacian ($p=2$) on the Heisenberg group. 
\end{abstract}

\maketitle

\section{Introduction}\label{Introduction}

In this paper we study the solvability problem and pointwise estimates of solutions  for a class of quasilinear  Lane-Emden type equations with measure data 
on Carnot groups of  arbitrary step. A complete characterization of removable singularities  for the 
corresponding homogeneous equations as well as a Liouville type theorem for supersolutions 
 will also be obtained  as a consequence. 

The basic setting of our study is a given  Carnot group $\mathbb{G}$  of step $r\geq 1$, i.e., a connected and simply connected stratified nilpotent Lie group
whose Lie algebra $\mathcal{G}$ admits a stratification $\mathcal G=V_1\oplus V_2 \oplus\cdots\oplus V_r$ and is generated via commutations 
by its first (horizontal) layer $V_1$ (see Sect. \ref{Cgroup}). Given a basic $\{X_j\}_{j=1}^{m}$ of  $V_1$, the associated $p$-Laplacian operator $\Delta_{\mathbb G,\,p}\,$, $1<p<\infty$, is defined by
$$\Delta_{\mathbb G,\,p}\, u=\sum_{i=1}^{m}X_{i}(|Xu|^{p-2}X_{i}u),$$  
where $$Xu=X_{1}uX_{1}+X_{2}uX_{2}+\cdots+X_{m}uX_{m},$$ 
is the {\it horizontal gradient} of $u$ with length $|Xu|=\left(\sum_{i=1}^{m}|X_{i}u|^2\right)^{1/2}.$

We study the following Lane-Emden type equation on a bounded open set $\Om\subset \mathbb{G}$:  
\begin{eqnarray}\label{L-E}
\left\{\begin{array}{rcl}
-\Delta_{\mathbb G,\,p}\, u&=&u^{q}+\om \quad{\rm in~}\Om,\\
u&=&0 \quad {\rm on~} \partial\Om,
\end{array}
\right.
\end{eqnarray}
where $q>p-1>0$, and $\om$ is a given nonnegative finite measure on $\Om$. 
 Our objective is to obtain necessary and sufficient conditions on 
the measure  $\om$ for the existence of solutions to \eqref{L-E},  and to give a complete characterization of removable singularities for the corresponding homogeneous equation: 
\begin{equation}\label{L-E-homo}
-\Delta_{\mathbb G,\,p}\, u=u^{q} \quad{\rm in~}\Om.
\end{equation}

Equations similar to \eqref{L-E} in the entire group $\mathbb G$ are also considered with applications to Liouville type theorems for the differential inequality
\begin{equation}\label{diffin}
-\Delta_{\mathbb G,\,p}\, u\geq u^{q}\quad{\rm in~}\mathbb G.
\end{equation}

Such problems have been studied in depth in our previous work  \cite{PV1}, \cite{PV2} in the standard Euclidean setting; see also earlier work in \cite{BP}, \cite{AP}, \cite{BV1}, and \cite{BV2}. However, in the setting of Carnot groups, the failure of the Besicovitch covering lemma (see \cite{SW}, \cite{KR}) and the lack of  a perfect dyadic grid of cubes cause major difficulties. We observe that even in the setting of the Heisenberg group, the simplest model
of a non-commutative Carnot group, Liouville type theorems for the differential inequality \eqref{diffin} are known only in the sub-Laplacian case, i.e.,  $p=2$ (see \cite{GL}, \cite{BCC}, \cite{PVe}).

A substantial part of our study of   \eqref{L-E} is devoted to integral inequalities for both linear and nonlinear potential operators 
and their discrete analogues  over  ``approximate" dyadic grids of cubes constructed in \cite{SW} and \cite{Chr} in the general setting of 
homogeneous spaces.

For each $\alpha>0$, the Bessel potential of a locally integrable function  $f$ in this setting is defined by
$${\mathbf G}_\alpha (f)(x)={\mathbf G}_\alpha * f(x)=\int_{\mathbb G} {\mathbf{G}}_\alpha(y^{-1} x) f(y) dy, \qquad x\in \mathbb G,  $$
where $\mathbf{G}_\alpha$ is the Bessel kernel of order $\alpha$ on $\mathbb G$ given by
\begin{equation}\label{BesselK}
\mathbf{G}_\alpha(x)=\frac{1}{\Gamma(\alpha/2)} \int_{0}^{\infty} t^{\alpha/2-1} e^{-t} h(x, t) dt. 
\end{equation}
In \eqref{BesselK} $h(x, t)$ is the heat kernel associated with the sub-Laplacian $\Delta_{\mathbb G}=\Delta_{\mathbb G, \, 2}$ whose basic properties
can be found in  \cite{Fol}, \cite{VSC}.
We also write 
$${\mathbf G}_\alpha (fd\mu)(x)={\mathbf G}_\alpha *(fd\mu)(x)=\int_{\mathbb G} {\mathbf{G}}_\alpha(y^{-1} x) f(y) d\mu(y), \qquad x\in \mathbb G,  $$
for each locally $\mu$-integrable function $f$. 

When dealing with  solutions on the entire group $\mathbb G$ and Liouville type theorems we need to use another linear potential, the Riesz potential. For each $0<\alpha<M$ and 
$f\in L^1_{\rm loc}(\mathbb G)$, it is defined by  
$${\mathbf I}_\alpha (f)(x)={\mathbf I}_\alpha * f(x)=\int_{\mathbb G}  \frac{f(y)}{d_{cc}(x, y)^{M-\alpha}} dy, \qquad x\in \mathbb G,  $$
where $d_{cc}$ is the Carnot-Carath\'eodory distance on $\mathbb G$, and $M$ is the homogeneous dimension of $\mathbb G$ (see Sect. \ref{Cgroup}). 

Associated with the kernel ${\mathbf G}_\alpha$ is the Bessel capacity ${C}_{\alpha, \, s }(\cdot)$, $s>1$, defined
by (see \cite{AH}, Sec. 2.6, in the Euclidean case) 
$${C}_{\alpha, \, s } (E)=\inf\{\norm{f}_{L^s(\mathbb G)}^{s}: {\mathbf G}_\alpha (f)\geq 1 {\rm ~on~} E, ~ f \in L^s(\mathbb G), ~ f\geq 0\}$$
for each compact set $E\subset \mathbb G$. Similarly,  the Riesz capacity $\dot{{C}}_{\alpha, \, s } (\cdot)$, $0<\alpha<M$, $s>1$, is defined,  
for a compact set $E\subset \mathbb G$, by
$$\dot{{C}}_{\alpha, \, s } (E)=\inf\{\norm{f}_{L^s(\mathbb G)}^{s}: {\mathbf I}_\alpha (f)\geq 1 {\rm ~on~} E, ~ f \in L^s(\mathbb G), ~ f\geq 0\}.$$

These capacities will play an essential role  in our characterizations of the existence of solutions and removable singularities, as well as Liouville type theorems for the Lane-Emden type equation. We will also need the following dual definition of these capacities (see \cite[Theorem 2.10]{Lu}; 
  \cite[Theorem 2.2.7]{AH} in the Euclidean case):
\begin{equation}\label{dualB}
{C}_{\alpha, \, s } (E)=\sup_{\mu\in \mathcal{M}^+(E), \,  \mu \not= 0} \left( \frac{\mu(E)}{\norm{{\rm\bf G}_{\alpha}*\mu}_{L^{\frac{s}{s-1}}(\mathbb G)}}\right)^s,
\end{equation}
and similarly,
\begin{equation}\label{dualR}
\dot{{C}}_{\alpha, \, s } (E)=\sup_{\mu\in \mathcal{M}^+(E),  \, \mu \not= 0} \left( \frac{\mu(E)}{\norm{{\rm\bf I}_{\alpha}*\mu}_{L^{\frac{s}{s-1}}(\mathbb G)}}\right)^s,
\end{equation}
where $\mathcal{M}^+(E)$ denotes the set of all nonnegative measures supported on $E$.
  
The nonlinear potential  we use  below is the (truncated) Wolff's potential $\mathbf{W}_{\alpha,\,p}^{R}$ originally introduced in \cite{HW}. 
In our setting, for  $\alpha>0$,  $p>1$, and $0<R\leq \infty$,   it is defined for each nonnegative measure $\mu$ on $\mathbb G$   by
\begin{equation*}
\mathbf{W}_{\alpha,\,p}^{R}\mu(x)=\int_{0}^{R}\Big[\frac{\mu(B_{t}(x))}{t^{M-\alpha p}}\Big]^{\frac{1}{p-1}}\frac{dt}{t},
\quad\quad x\in \mathbb G,
\end{equation*}
where $B_t(x)$ is the Carnot-Carath\'eodory ball centered at $x$ of radius $t$ (see Sect. \ref{Cgroup}).

For our purpose  we introduce the following notion of solutions  for $p$-Laplace equations with general measure as data. This will serve as an efficient 
substitution for the notion of renormalized solutions introduced in \cite{DMOP} in the Euclidean setting.
 
\begin{definition} \label{DEFEQ}
For a nonnegative finite measure $\mu$ on $\Om$, we say that $u$ is a solution to
\begin{eqnarray}\label{eqme}
\left\{\begin{array}{rcl}
-\Delta_{\mathbb G,\, p}\, u&=&\mu \quad {\rm in~} \Om,\\
u&=&0 \quad {\rm on~} \partial\Om,
\end{array}
\right.
\end{eqnarray}
in the potential theoretic sense if $u$ is $p$-superharmonic in $\Om$,  $\min\{u,k\}\in S_{0}^{1,\, p}(\Om)$ for every $k>0$, 
$u$ satisfies a pointwise bound
\begin{equation}\label{Kconst}
u(x)\leq A\,\mathbf W_{1,\,p}^{2{\rm diam}(\Om)}(x), \qquad\forall  x\in \Om, 
\end{equation}
and for every $\varphi\in C^{\infty}_{0}(\Om)$ one has
$$\int_{\Om}|Xu|^{p-2}Xu \cdot X\varphi dx=\int_{\Om}\varphi d\mu.$$
\end{definition}

From this definition we see right away that potential theoretic solutions to \eqref{eqme} are also distributional solutions. However, 
the converse is not necessarily true as easily seen by a simple example (see \cite{Kil}).
The existence of potential theoretic solutions to \eqref{eqme} will be obtained in Corollary \ref{corexist},  whereas their uniqueness is unknown
 even in the Euclidean setting.

In Definition \ref{DEFEQ} the notation $S_{0}^{1,\, p}(\Om)$ stands for the completion of $C^{\infty}_{0}(\Om)$ under the 
norm of the horizontal Sobolev space $S^{1,\, p}(\Om)$ (see Sect. \ref{sec3}), and 
in \eqref{Kconst}, $A$ is a universal constant independent of $x, u, \mu,$ and $\Om$. 
For the notion of $p$-superharmonic functions on Carnot groups see Sect. \ref{sec3}.
We are now ready to state the first result of the paper. 

\begin{theorem}\label{Dmain1}
Let $p>1$, $q>p-1$, and $R={\rm diam}(\Omega)$.
Suppose that $\omega$ is a nonnegative finite measure on $\Omega$ such that
${\rm supp}(\om)\Subset\Om$. If the equation
\begin{equation}\label{DMA1}
\left\{\begin{array}{rcl}
-\Delta_{\mathbb{G},\,p}\, u&=&u^{q}+ \omega \quad{\rm in~} \Omega,\\
u&=&0 \quad {\rm on~} \partial\Omega
\end{array}
\right.
\end{equation}
has a nonnegative $p$-superharmonic distributional solution  $u\in L^{q}(\Omega)$,  then there exists a
constant $C>0$ such that  statements {\rm(i)}--{\rm (v)} below hold true.
\begin{itemize}
\item[(i)] The inequality
\begin{equation}\label{DMA2'}
\int_{\mathbb{G}}\mathbf{G}_{p}(f)^{\frac{q}{q-p+1}} d\omega\leq C \, \int_{\mathbb{G}}f^{ \frac{q}{q-p+1}}dx
\end{equation}
holds for all $f\in L^{\frac{q}{q-p+1}}$, $f\geq 0$.
\item[(ii)] For every compact set $E\subset\Om$,
$$\omega(E)\leq C\,{C}_{p,\,\frac{q}{q-p+1}}(E).$$
\item[(iii)] The inequality
\begin{equation}\label{DMA2}
\int_{\mathbb{G}}[{\rm\bf W}^{2R}_{1,\,p}(g d\omega) (x)]^{q} \,
dx\leq C \, \int_{\mathbb{G}}g^{ \frac{q}{p-1}}d\omega
\end{equation}
holds for all $g\in L^{\frac{q}{p-1}} (d \omega)$, $g\geq 0$.
\item[(iv)] The inequality
\begin{equation}\label{DMA4}
\int_{B}[{\rm\bf W}_{1,\,p}^{2R}\omega_{B}(x)]^{q} \, dx\leq C \, \omega(B)
\end{equation}
holds for all Carnot-Carath\'eodory balls $B\subset\mathbb{G}$.
\item[(v)] For all $x\in\Omega$,
\begin{equation}\label{DMA5}
{\rm\bf W}_{1,\, p}^{2R}[({\rm\bf W}_{1,\, p}^{2R}\omega)^{q}](x)\leq C  \,
{\rm\bf W}_{1,\, p}^{2R}\omega(x).
\end{equation}
\end{itemize}

Conversely, there exists a constant $C_{0}=C_{0}(M,p,q)>0$ such
that if any one of the  statements {\rm (i)}--{\rm (v)} holds with
$C\leq C_{0}$ then equation (\ref{DMA1}) has a  nonnegative potential theoretic solution
$u\in L^{q}(\Omega)$ for any nonnegative finite measure $\om$. Moreover, $u$ satisfies
the following pointwise estimate
$$u\leq \kappa\,{\rm\bf W}^{2R}_{1,\, p}\omega.$$
\end{theorem}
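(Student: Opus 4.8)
The plan is to first show that the five conditions (i)--(v) are equivalent with a \emph{quantitative} dependence of the constants — if one of them holds with constant $C$, the others hold with constant bounded by $C'(M,p,q)\max\{C,C^{\gamma}\}$ for some $\gamma=\gamma(p,q)>0$ — and then, assuming the pointwise inequality \eqref{DMA5} holds with small enough $C$, to build the solution by monotone iteration. The equivalence $\text{(i)}\Leftrightarrow\text{(ii)}$ is the capacitary (Maz'ya type) characterization of the trace inequality for $\mathbf{G}_{p}$, obtained by testing \eqref{DMA2'} against the near-extremal measures of the dual capacity \eqref{dualB}. For $\text{(ii)}\Leftrightarrow\text{(iv)}$ we use the pointwise comparison of $\mathbf{W}^{2R}_{1,\,p}\om$ with the capacity $C_{p,\,q/(q-p+1)}$ on balls, together with Wolff's inequality relating $\int(\mathbf{W}^{2R}_{1,\,p}\mu)^{q}\,dx$ to a nonlinear energy of $\mu$. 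The equivalences $\text{(iii)}\Leftrightarrow\text{(iv)}\Leftrightarrow\text{(v)}$ are the analytic core: $\text{(iii)}\Rightarrow\text{(iv)}$ follows at once by taking $g$ to be the indicator of a ball, $\text{(iv)}\Rightarrow\text{(iii)}$ is a Sawyer-type testing theorem for the Wolff potential, proved by a dyadic/good-$\lambda$ argument over the ``approximate'' dyadic grids of \cite{SW} and \cite{Chr}, and $\text{(iv)}\Leftrightarrow\text{(v)}$ follows by a duality argument after raising to the power $p-1$, using the homogeneity $\mathbf{W}^{2R}_{1,\,p}(c\,\mu)=c^{1/(p-1)}\,\mathbf{W}^{2R}_{1,\,p}\mu$.

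Granting the equivalences, assume \eqref{DMA5} with $C\le C_{0}$, where $C_{0}=C_{0}(M,p,q)$ is to be fixed. Set $u_{0}\equiv 0$ and, invoking the existence of potential theoretic solutions for measure data (Corollary \ref{corexist}), let $u_{j+1}$ be the nonnegative potential theoretic solution of $-\Delta_{\mathbb{G},\,p}\,u_{j+1}=u_{j}^{q}+\om$ in $\Om$, $u_{j+1}=0$ on $\pa\Om$. The comparison principle for $p$-superharmonic functions gives $0=u_{0}\le u_{1}\le u_{2}\le\cdots$. The upper pointwise Wolff estimate for $p$-Laplace equations with measure data on $\mathbb{G}$ (the Kilpel\"ainen--Mal\'y analogue; see Sect. \ref{sec3}), together with subadditivity of $\mu\mapsto\mathbf{W}^{2R}_{1,\,p}\mu$, yields
$$u_{j+1}\le C_{1}\,\mathbf{W}^{2R}_{1,\,p}\big(u_{j}^{q}\,dx+\om\big)\le C_{1}\,\mathbf{W}^{2R}_{1,\,p}\big(u_{j}^{q}\,dx\big)+C_{1}\,\mathbf{W}^{2R}_{1,\,p}\om .$$
Inductively suppose $u_{j}\le\kappa\,\mathbf{W}^{2R}_{1,\,p}\om$. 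Then $u_{j}^{q}\le\kappa^{q}(\mathbf{W}^{2R}_{1,\,p}\om)^{q}$, so by monotonicity, homogeneity, and \eqref{DMA5},
$$\mathbf{W}^{2R}_{1,\,p}\big(u_{j}^{q}\,dx\big)\le\kappa^{\frac{q}{p-1}}\,\mathbf{W}^{2R}_{1,\,p}\big[(\mathbf{W}^{2R}_{1,\,p}\om)^{q}\big]\le C\,\kappa^{\frac{q}{p-1}}\,\mathbf{W}^{2R}_{1,\,p}\om ,$$
whence $u_{j+1}\le C_{1}\big(C\,\kappa^{q/(p-1)}+1\big)\mathbf{W}^{2R}_{1,\,p}\om$. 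Since $q>p-1$ we have $q/(p-1)>1$; choosing $\kappa=2C_{1}$ and then $C_{0}=(2C_{1})^{-q/(p-1)}$ makes the right-hand side $\le\kappa\,\mathbf{W}^{2R}_{1,\,p}\om$, closing the induction. Hence $u_{j}\le\kappa\,\mathbf{W}^{2R}_{1,\,p}\om$ for all $j$, uniformly.

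Let $u=\lim_{j\to\infty}u_{j}$. Condition (iii) with $g\equiv 1$ gives $\mathbf{W}^{2R}_{1,\,p}\om\in L^{q}$, hence $u\in L^{q}(\Om)$ and $u_{j}^{q}\to u^{q}$ in $L^{1}(\Om)$ by monotone convergence; testing the equation for $u_{j+1}$ against $\min\{u_{j+1},k\}$ and using that the masses $\int_{\Om}(u_{j}^{q}+\om)$ are uniformly bounded shows that $\{\min\{u_{j},k\}\}_{j}$ is bounded in $S^{1,\,p}_{0}(\Om)$ for each $k$. The stability theory of $p$-superharmonic functions on Carnot groups (a.e.\ convergence of horizontal gradients under monotone convergence of solutions; see Sect. \ref{sec3}) then shows that $u$ is $p$-superharmonic in $\Om$, that $\min\{u,k\}\in S^{1,\,p}_{0}(\Om)$ for all $k>0$, and that $\int_{\Om}|Xu|^{p-2}Xu\cdot X\vp\,dx=\int_{\Om}\vp\,u^{q}\,dx+\int_{\Om}\vp\,d\om$ for every $\vp\in C^{\infty}_{0}(\Om)$. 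Since $u^{q}\,dx+\om$ is a finite measure and $u\le\kappa\,\mathbf{W}^{2R}_{1,\,p}\om\le\kappa\,\mathbf{W}^{2R}_{1,\,p}(u^{q}\,dx+\om)$, the bound \eqref{Kconst} holds, so $u$ is a potential theoretic solution of \eqref{DMA1} with $u\le\kappa\,\mathbf{W}^{2R}_{1,\,p}\om$.

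The main obstacle will be the potential-theoretic input on the Carnot group rather than the iteration itself: proving the Sawyer-type self-improvement $\text{(iv)}\Rightarrow\text{(iii)}$ and the capacity--Wolff comparison $\text{(ii)}\Leftrightarrow\text{(iv)}$ without a genuine dyadic grid and without the Besicovitch covering lemma forces one to replace the classical dyadic decompositions by the approximate grids of \cite{SW}, \cite{Chr} and to re-derive every weighted-norm and testing estimate there with constants depending only on $M,p,q$; one also needs the two-sided pointwise Wolff estimates and the monotone stability theory for $p$-superharmonic functions in the sub-Riemannian setting, the analogues of the results of Kilpel\"ainen--Mal\'y and Trudinger--Wang. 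The exponent hypothesis $q>p-1$ enters only through the superlinearity $q/(p-1)>1$ that makes the fixed-point inequality solvable once $C_{0}$ is chosen small.
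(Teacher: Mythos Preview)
Your proposal addresses only half of the theorem. You establish the equivalence of (i)--(v) among themselves and then, assuming (v) with small constant, construct a solution by monotone iteration. That iteration is essentially the paper's argument (Theorem \ref{SU2}), and the equivalences you outline are close to the paper's chain $({\rm i})\Leftrightarrow({\rm ii})$, $({\rm i})\Leftrightarrow({\rm iii})$ via duality and Theorem \ref{W-G}, $({\rm iii})\Rightarrow({\rm iv})$, $({\rm iv})\Rightarrow({\rm v})$. But nowhere do you prove the \emph{necessity} direction: that the existence of a nonnegative $p$-superharmonic solution $u\in L^{q}(\Om)$ of \eqref{DMA1} forces any one of (i)--(v) to hold. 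Equivalence of (i)--(v) is vacuous for this; you must start from $u$ and produce a condition on $\om$.

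In the paper this is the substantive part of the proof. One sets $\mu=u^{q}+\om$ and uses the \emph{lower} Wolff potential bound from Theorem \ref{potential},
\[
u(x)\ge C\,\mathbf{W}^{\delta(x)/3}_{1,\,p}\mu(x),\qquad \delta(x)={\rm dist}(x,\partial\Om),
\]
to obtain $(\mathbf{W}^{\delta(x)/3}_{1,\,p}\mu)^{q}\,dx\le C\,d\mu$. This self-referential inequality is then discretized over the approximate dyadic grids $\mathcal{D}_m$ via Lemma \ref{DZE}, upgraded from $\mu$ to $g\,d\mu$ by inserting the dyadic maximal function $\mathbf{M}^{{\rm dy},\,\mathcal{D}_m}_{\mu}$ and using its $L^{q/(p-1)}(d\mu)$-boundedness, and the compact support hypothesis ${\rm supp}(\om)\Subset\Om$ is used to pass from the variable truncation radius $\delta(x)/3$ to a fixed radius comparable to $r_0={\rm dist}({\rm supp}(\om),\partial\Om)$. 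After summing over the grid and applying Proposition \ref{propB} and Theorems \ref{cont-discrete}, \ref{W-G}, one arrives at the trace inequality \eqref{mu-mu} for $\mathbf{G}_p$, and duality yields (i). None of this mechanism appears in your plan; the lower pointwise estimate, the maximal-function self-improvement, and the role of the support condition are all missing.

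A minor point on what you did write: the map $\mu\mapsto\mathbf{W}^{2R}_{1,\,p}\mu$ is not subadditive when $1<p<2$, only quasi-subadditive with constant $\max\{1,2^{p'-2}\}$; the paper carries this constant explicitly through the iteration in Theorem \ref{SU2}.
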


Our second result is about  removable singularities of solutions to homogeneous equations, which is in fact a
consequence of Theorem \ref{Dmain1}.   

\begin{theorem}\label{removeforp} Let $q>p-1>0$ and  let $E$ be a compact subset of $\Om$.
Then any solution $u$ to the problem
\begin{equation}\label{omE}
\left\{\begin{array}{c}
 u {\rm ~is~} p{\text-}{\rm superharmonic~in~}\Om\setminus E,\\
u\in L^{q}_{\rm loc}(\Om\setminus E), \quad u \ge 0,\\
-\Delta_{\mathbb{G},\,p}\, u=u^{q} \quad {\rm in} \quad\mathcal{D}'(\Om\setminus E)
\end{array}
\right.
\end{equation}
is also a solution to a similar problem with $\Om$ in place of $\Om\setminus E$
if and only if $${C}_{p,\,\frac{q}{q-p+1}}(E)=0.$$
\end{theorem}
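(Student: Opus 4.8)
The plan is to reduce the removability question to the solvability criterion of Theorem \ref{Dmain1} applied to the measure $\omega$ supported on $E$, together with the classical fact that $p$-superharmonic functions in $\Om\setminus E$ whose associated Riesz measure does not charge $E$ extend across $E$ exactly when $E$ has zero $p$-capacity. More precisely, for the "if" direction, assume $C_{p,\,q/(q-p+1)}(E)=0$. Given a solution $u$ of \eqref{omE}, the first step is to show that $u$, restricted to a neighborhood of $E$, is locally bounded in $L^q$ and that the distributional equation $-\Delta_{\mathbb G,\,p}\,u=u^q$ in $\mathcal D'(\Om\setminus E)$ can be upgraded to $-\Delta_{\mathbb G,\,p}\,u=u^q$ in $\mathcal D'(\Om)$; this uses that sets of zero $C_{p,\,q/(q-p+1)}$-capacity have zero $p$-capacity (since $q/(q-p+1)>1$ and the Bessel capacities are monotone in the way needed), hence are removable for bounded $p$-superharmonic functions and do not support any admissible Riesz measure. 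The second step is to observe that $u$ then solves the global homogeneous problem, and one extends it to a solution on $\Om$ with zero boundary data using the a priori Wolff potential bound.

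For the "only if" direction, I would argue by contradiction: suppose $C_{p,\,q/(q-p+1)}(E)>0$ but every solution in $\Om\setminus E$ extends to $\Om$. The idea is to produce, on $\Om\setminus E$, a solution that \emph{cannot} extend, by exploiting that a set of positive capacity carries a nontrivial measure $\omega$ with finite energy in the relevant sense. Concretely, by the dual formula \eqref{dualB} and the positivity of $C_{p,\,q/(q-p+1)}(E)$, one can choose a nonzero $\omega\in\mathcal M^+(E)$ for which condition (ii) of Theorem \ref{Dmain1} \emph{fails} for the would-be extension — or more directly, one builds a solution whose singular part of the Riesz measure is supported on $E$. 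The cleanest route is: if $u$ extended to a $p$-superharmonic solution on all of $\Om$ with $-\Delta_{\mathbb G,\,p}\,u = u^q$ in $\mathcal D'(\Om)$ and $u\in L^q_{\rm loc}(\Om)$, then the Riesz measure of $u$ equals $u^q\,dx$, which is absolutely continuous, so $u$ would in fact solve the homogeneous equation across $E$ with no extra mass; but by the necessity part of Theorem \ref{Dmain1} (statement (ii)) applied with $\omega$ being any measure we are allowed to add — here we instead use that the obstruction is precisely measured by $C_{p,\,q/(q-p+1)}$ — a solution with a genuine singularity on $E$ exists when $C_{p,\,q/(q-p+1)}(E)>0$. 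To make this rigorous I would invoke the converse direction of Theorem \ref{Dmain1}: rescaling $\omega$ (a capacitary-type measure on $E$) by a small constant so that the constant $C$ in (ii) is below $C_0$, one gets a potential theoretic solution $u$ of $-\Delta_{\mathbb G,\,p}\,u=u^q+\omega$; this $u$ is a solution of \eqref{omE} on $\Om\setminus E$ (where $\omega$ vanishes) but its Riesz measure on $\Om$ has the singular part $\omega\neq 0$ concentrated on $E$, so it is not a solution of the homogeneous problem on $\Om$, contradicting removability.

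The main obstacle I anticipate is the bookkeeping in the "only if" part: one must ensure that the solution $u$ produced by Theorem \ref{Dmain1} with datum $\omega$ supported on $E$ is genuinely non-extendable, i.e. that its Riesz measure really retains the singular mass $\omega$ on $E$ rather than being "absorbed". This requires knowing that for potential theoretic solutions the decomposition $-\Delta_{\mathbb G,\,p}\,u = u^q\,dx + \omega$ holds as an equality of measures with the stated singular part — which should follow from the construction of $u$ as an increasing limit of solutions and from the fact that $u^q\in L^1(\Om)$ (so $u^q\,dx$ carries no mass on the $p$-polar... rather, $q/(q-p+1)$-polar set $E$, while $\omega$ does). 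A secondary technical point is passing between the two relevant capacities and the correct notion of "polar" set on the Carnot group $\mathbb G$; here I would lean on the equivalence of conditions (i)--(v) in Theorem \ref{Dmain1} and standard nonlinear potential theory on homogeneous spaces, and on the fact (built into Definition \ref{DEFEQ} and \eqref{Kconst}) that potential theoretic solutions are controlled pointwise by $\mathbf W_{1,\,p}^{2R}\omega$, so their behavior near $E$ is dictated exactly by whether $\omega$ can be nontrivial on $E$, i.e. by $C_{p,\,q/(q-p+1)}(E)$.
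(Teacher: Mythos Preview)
Your overall strategy matches the paper's: both directions are reduced to Theorem~\ref{Dmain1}, with a capacitary measure on $E$ playing the role of $\omega$ in the ``only if'' part. That part of your sketch is essentially correct; the paper proceeds exactly as you suggest, taking $\omega=\epsilon\,\mu^{E}$ for the capacitary measure of $E$ and invoking the sufficiency half of Theorem~\ref{Dmain1}. One point you leave implicit is \emph{why} $\mu^{E}$ satisfies condition~(ii) with a finite constant before rescaling: the paper uses the capacitary potential bound ${\mathbf G}_{p}*[{\mathbf G}_{p}*\mu^{E}]^{(q-p+1)/(p-1)}\leq 1$ on ${\rm supp}(\mu^{E})$ together with the dual definition \eqref{dualB} to deduce $\mu^{E}(K)\leq C_{p,\,q/(q-p+1)}(K)$ for every compact $K\subset E$.

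The ``if'' direction, however, has a genuine gap. You correctly note that $C_{p,\,q/(q-p+1)}(E)=0$ forces ${\rm cap}_{1,\,p}(E,\Om)=0$, and this does allow one to extend $u$ to a $p$-superharmonic $\tilde u$ on all of $\Om$ (no boundedness of $u$ is required; the restriction to ``bounded $p$-superharmonic functions'' is unnecessary). But your claim that a set of zero $p$-capacity ``does not support any admissible Riesz measure'' is false as stated: a single point is $p$-polar yet carries the Riesz measure of the fundamental solution. What one actually obtains after extension is
\[
-\Delta_{\mathbb G,\,p}\,\tilde u=\tilde u^{\,q}+\mu\quad\text{in }\mathcal D'(\Om)
\]
for some nonnegative measure $\mu$ supported on $E$, and the entire content of the ``if'' direction is to show $\mu=0$. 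The paper does this not via any general property of $p$-polar sets, nor via the Wolff potential bound you mention, but by applying the \emph{necessity} half of Theorem~\ref{Dmain1} to the equation just displayed: condition~(ii) yields $\mu(E)\leq C\,C_{p,\,q/(q-p+1)}(E)=0$. You never invoke Theorem~\ref{Dmain1} in your ``if'' argument, and without it the excess mass $\mu$ cannot be ruled out. (A preliminary step you also omit: one must first check $\tilde u\in L^{q}_{\rm loc}(\Om)$, which the paper obtains by a Fatou argument with cutoffs $\varphi_{n}\in C_{0}^{\infty}(\Om\setminus E)$ approximating a given $\varphi\in C_{0}^{\infty}(\Om)$, so that Theorem~\ref{Dmain1} becomes applicable.)
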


The proof of  Theorems \ref{Dmain1} and \ref{removeforp} will be given at the end of Sect. \ref{sec4}.  In  case the bounded domain 
$\Om$ in Theorem \ref{Dmain1} is replaced by the whole group $\mathbb G$, then Riesz potentials and the corresponding Riesz capacity must be used, 
and  we have the following  result.

\begin{theorem}\label{Gmain1}
Let $1<p<M$, $q>p-1$ and let  $\omega$ be a nonnegative locally finite measure on $\mathbb G$. If the equation
\begin{equation}\label{global}
\left\{\begin{array}{rcl}
-\Delta_{\mathbb{G},\,p}\, u&=&u^{q}+ \omega \quad{\rm in~} \mathbb G,\\
\inf_{\mathbb G} u&=&0
\end{array}
\right.
\end{equation}
has a nonnegative $p$-superharmonic distributional solution  $u\in L^{q}_{\rm loc}(\mathbb G)$,  then there exists a
constant $C>0$ such that  statements {\rm(i)}--{\rm (vi)} below hold true.
\begin{itemize}
\item[(i)] For every compact set $E\subset\mathbb G$,
$$\int_{E} u^q\, dx \leq C\,\dot{{C}}_{p,\,\frac{q}{q-p+1}}(E).$$
\item[(ii)] The inequality
\begin{equation*}
\int_{\mathbb{G}}\mathbf{I}_{p}(f)^{\frac{q}{q-p+1}} d\omega\leq C \, \int_{\mathbb{G}}f^{ \frac{q}{q-p+1}}dx
\end{equation*}
holds for all $f\in L^{\frac{q}{q-p+1}}$, $f\geq 0$.
\item[(iii)] For every compact set $E\subset\mathbb G$,
$$\omega(E)\leq C\,\dot{{C}}_{p,\,\frac{q}{q-p+1}}(E).$$
\item[(iv)] The inequality
\begin{equation*}
\int_{\mathbb{G}}[{\rm\bf W}^{\infty}_{1,\,p}(g d\omega) (x)]^{q} \,
dx\leq C \, \int_{\mathbb{G}}g^{ \frac{q}{p-1}}d\omega
\end{equation*}
holds for all $g\in L^{\frac{q}{p-1}}(d \omega)$, $g\geq 0$.
\item[(v)] The inequality
\begin{equation*}
\int_{B}[{\rm\bf W}_{1,\,p}^{\infty}\omega_{B}(x)]^{q} \, dx\leq C \, \omega(B)
\end{equation*}
holds for all Carnot-Carath\'eodory balls $B\subset\mathbb{G}$.
\item[(vi)] For all $x\in\Omega$,
\begin{equation*}
{\rm\bf W}_{1,\, p}^{\infty}[({\rm\bf W}_{1,\, p}^{\infty}\omega)^{q}](x)\leq C  \,
{\rm\bf W}_{1,\, p}^{\infty}\omega(x).
\end{equation*}
\end{itemize}

Conversely, there exists a constant $C_{0}=C_{0}(M,p,q)>0$ such
that if any one of the  statements {\rm (ii)}--{\rm (vi)} holds with
$C\leq C_{0}$ then equation (\ref{global}) has a  nonnegative $p$-superharmonic solution
$u\in L^{q}_{\rm loc}(\mathbb G)$. Moreover, $u$ satisfies
the following pointwise two-sided estimate
$$\kappa_1\,{\rm\bf W}^{\infty}_{1,\, p}\omega \leq u\leq \kappa_2\,{\rm\bf W}^{\infty}_{1,\, p}\omega.$$
\end{theorem}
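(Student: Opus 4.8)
The plan is to reduce the global statement to the local one (Theorem \ref{Dmain1}) by exhausting $\mathbb G$ with an increasing sequence of Carnot--Carath\'eodory balls $B_j = B_j(x_0)$ and passing to the limit, while keeping track of how Bessel potentials and the truncated Wolff potential $\mathbf W_{1,p}^{2R}$ degenerate into Riesz potentials and $\mathbf W_{1,p}^{\infty}$ as $R = {\rm diam}(B_j)\to\infty$. First I would record the elementary scaling/comparison facts: on a ball $B$ of radius $R$ one has $\mathbf W_{1,p}^{2R}\mu \asymp \mathbf W_{1,p}^{\infty}\mu$ up to additive terms controlled by $\mu(\mathbb G)$ and $R$, and the Bessel kernel $\mathbf G_p$ and the Riesz kernel $\mathbf I_p(x,y) = d_{cc}(x,y)^{\alpha - M}$ are comparable at small scales (from \eqref{BesselK} and the heat-kernel bounds in \cite{Fol}, \cite{VSC}), so that the corresponding capacities $C_{p,q/(q-p+1)}$ and $\dot C_{p,q/(q-p+1)}$ are comparable on compact sets of bounded diameter. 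This, together with the dual formulas \eqref{dualB}--\eqref{dualR}, is what lets the list (i)--(vi) be the honest ``global'' counterpart of the list in Theorem \ref{Dmain1}.

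For the necessity direction: given a nonnegative $p$-superharmonic distributional solution $u \in L^q_{\rm loc}(\mathbb G)$ of \eqref{global} with $\inf_{\mathbb G} u = 0$, I would restrict $u$ to each ball $B_j$ and compare it there with the solution of the corresponding Dirichlet problem, using the fact that $u$ is a supersolution of $-\Delta_{\mathbb G,p} v = u^q + \omega$ on $B_j$ and the minimum principle for $p$-superharmonic functions. Theorem \ref{Dmain1} then yields, for each $j$, the local estimates with constants that a priori depend on $R_j = {\rm diam}(B_j)$; the point is that the normalization $\inf u = 0$ forces the pointwise lower Wolff bound (which is standard for $p$-superharmonic functions, via the Wolff potential estimates of Kilpel\"ainen--Mal\'y type valid in this setting — the ingredient behind \eqref{Kconst}), and combining the lower bound $u \gtrsim \mathbf W_{1,p}^{R_j}\mu_u$ where $\mu_u = -\Delta_{\mathbb G,p}u \ge u^q + \omega$ with the integrability $u \in L^q_{\rm loc}$ gives the scale-invariant inequalities \eqref{DMA4}-type for all balls, hence (v), and then (iv), (ii), (iii), (i) follow by the standard equivalences (Maz'ya-type capacitary/trace duality, and the Wolff-potential characterization of the trace inequality — exactly the chain of implications proved for Theorem \ref{Dmain1} but now with $R = \infty$). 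Letting $j\to\infty$ and using monotone convergence upgrades everything to the global kernels.

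For the sufficiency direction: assuming one of (ii)--(vi) with small constant $C \le C_0$, I would run the iteration scheme $u_0 = c\,\mathbf W_{1,p}^{\infty}\omega$, $u_{n+1}$ the potential-theoretic solution of $-\Delta_{\mathbb G,p} u_{n+1} = u_n^q + \omega$ on an exhausting ball (then on $\mathbb G$), using Theorem \ref{Dmain1} to produce each $u_{n}$ together with the two-sided Wolff bound, and the self-improving inequality (v)/(vi) to close the induction $u_n \le \kappa_2 \mathbf W_{1,p}^{\infty}\omega$ with a fixed $\kappa_2$. Monotone convergence of the $u_n$ (they can be taken increasing) gives a limit $u$ with $\kappa_1 \mathbf W_{1,p}^{\infty}\omega \le u \le \kappa_2 \mathbf W_{1,p}^{\infty}\omega$; the lower bound forces $\inf_{\mathbb G} u = 0$ precisely because $1 < p < M$ makes $\mathbf W_{1,p}^{\infty}\omega(x) \to 0$ as $d_{cc}(x,x_0)\to\infty$ for a locally finite $\omega$ (this is where the hypothesis $p < M$ is essential). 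Passing the equation to the limit uses stability of $p$-superharmonic functions under increasing limits and the fact that $u_n^q \to u^q$ in $L^1_{\rm loc}$, which is controlled by the uniform upper bound and $\mathbf W_{1,p}^{\infty}\omega \in L^q_{\rm loc}$ (equivalent to (v)).

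The main obstacle I anticipate is the passage from finite-scale to infinite-scale \emph{uniformly} in the exhaustion — i.e. showing that the constants coming out of Theorem \ref{Dmain1} applied on $B_j$ do not blow up as $R_j\to\infty$, and that the truncated Wolff potentials converge to the genuine $\mathbf W_{1,p}^{\infty}\omega$ without losing the small-constant bookkeeping needed for the fixed-point argument. This is delicate because the ``approximate dyadic grids'' of \cite{SW}, \cite{Chr} used to prove the dyadic analogues of (iii)--(vi) in the bounded case have a scale-dependent structure; I would handle it by first proving the global dyadic Wolff inequalities directly on a single grid adapted to $\mathbb G$ (rather than taking a limit of bounded-domain grids), and only then transferring back to the continuous Wolff potential via the standard grid-comparison lemma. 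A secondary technical point is verifying $\inf_{\mathbb G}u = 0$ rather than merely $\liminf = 0$, which again reduces to the decay of $\mathbf W_{1,p}^{\infty}\omega$ at infinity under $p < M$.
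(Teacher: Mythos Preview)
Your overall strategy is sound and close in spirit to the paper, but the route you take for the \emph{necessity} direction is an unnecessary detour that creates precisely the uniformity obstacle you flag. The paper does \emph{not} reduce to Theorem~\ref{Dmain1} on an exhaustion $B_j$ and then fight to keep the constants bounded as $R_j\to\infty$. Instead it reruns the proof of Theorem~\ref{Dmain1} directly in the global setting, replacing each local ingredient by its global analogue: the condition $\inf_{\mathbb G}u=0$ gives immediately the \emph{global} lower Wolff bound \eqref{IKM-G}, namely $u(x)\geq C_1\,\mathbf W_{1,p}^{\infty}\mu(x)$ with $\mu=u^q+\omega$, and from there one obtains
\[
\int_{\mathbb G}\bigl[\mathbf I_p(g\,d\mu)\bigr]^{\frac{q}{p-1}}\,dx\leq C\int_{\mathbb G}g^{\frac{q}{p-1}}\,d\mu
\]
for all $g\geq 0$ in $L^{q/(p-1)}(d\mu)$, with \emph{no} restriction on ${\rm supp}(g)$. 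Theorem~\ref{W-G-R} (the Riesz version of the Wolff inequality) then replaces Theorem~\ref{W-G}, and the chain of equivalences (ii)$\Leftrightarrow$\dots$\Leftrightarrow$(vi) goes through exactly as before. There is no limit in $R_j$ to control, because the lower bound is already untruncated. Your plan to ``prove the global dyadic Wolff inequalities directly on a single grid'' is in fact what the paper does (Theorems~\ref{cont-discrete} and \ref{W-G-R}); you should invoke those results rather than recover them as limits of the bounded-domain versions.

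For the \emph{sufficiency} direction your iteration scheme is essentially the paper's: it is packaged as a global analogue of Theorem~\ref{SU2}, proved by approximation on large balls as in \cite[Theorem~5.3]{PV1}. One point needs correction: your claim that $\mathbf W_{1,p}^{\infty}\omega(x)\to 0$ as $d_{cc}(x,x_0)\to\infty$ for an arbitrary locally finite $\omega$ is false (take $\omega$ equal to Haar measure). The conclusion $\inf_{\mathbb G}u=0$ does not come from decay of the Wolff potential; it is inherited from the approximating potential-theoretic solutions on balls, which vanish on $\partial B_j$, together with the upper bound $u\leq \kappa_2\,\mathbf W_{1,p}^{\infty}\omega$ and the hypothesis $\mathbf W_{1,p}^{\infty}\omega<\infty$ a.e.\ (which follows from any of (ii)--(vi)).
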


Theorem \ref{Gmain1} yields the following Liouville type theorem. 
We observe that for $p\not=2$ this Liouville type theorem is new even in the Heisenberg group. 
For $p=2$, as mentioned earlier,  such a result was obtained in \cite{GL}, \cite{BCC}, and \cite{PVe} in the setting of the Heisenberg group.
However, the approach of using test functions and integration by parts in these papers does not seem to work in the general setting 
of Carnot groups of arbitrary step.

\begin{corollary}\label{LiouvilleT} If $q\leq \frac{M(p-1)}{M-p}$, then the inequality
$-\Delta_{\mathbb G,\, p}\, u \geq u^q$ admits no nontrivial nonnegative $p$-superharmonic distributional solutions
in $\mathbb G$.
\end{corollary}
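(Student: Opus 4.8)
The plan is to derive Corollary \ref{LiouvilleT} from Theorem \ref{Gmain1} by contradiction. Suppose $u$ is a nontrivial nonnegative $p$-superharmonic distributional solution of $-\Delta_{\mathbb G,\,p}\,u\geq u^q$ in $\mathbb G$. Writing $-\Delta_{\mathbb G,\,p}\,u=\mu$ for the associated nonnegative Riesz measure, we have $\mu\geq u^q\,dx$, so in particular $u$ solves an equation of the form $-\Delta_{\mathbb G,\,p}\,u=u^q+\omega$ with $\omega=\mu-u^q\,dx\geq 0$, and $\inf_{\mathbb G}u=0$ (this last point uses that a nonnegative $p$-superharmonic function on all of $\mathbb G$ which is bounded below by a positive constant would have to be identically a large constant, contradicting $-\Delta_{\mathbb G,\,p}\,u\geq u^q>0$; alternatively one invokes the minimum principle on large balls together with the local integrability $u\in L^q_{\rm loc}$). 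Thus the hypotheses of Theorem \ref{Gmain1} are met, and statement (i) of that theorem applies.

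Next I would exploit statement (i): for every compact $E\subset\mathbb G$,
$$\int_E u^q\,dx\leq C\,\dot C_{p,\,\frac{q}{q-p+1}}(E).$$
The key is a scaling/capacity computation. Taking $E=\bar B_\rho$, a Carnot--Carath\'eodory ball of radius $\rho$, the Riesz capacity $\dot C_{p,\,s}$ with $s=\frac{q}{q-p+1}$ of a ball scales homogeneously: by the dilation structure of $\mathbb G$ (homogeneous dimension $M$) and the homogeneity of the Riesz kernel $d_{cc}(x,y)^{-(M-p)}$, one gets
$$\dot C_{p,\,s}(\bar B_\rho)=c\,\rho^{\,M-ps}$$
for a constant $c=c(M,p,q)>0$, provided $M-ps\geq 0$; the exponent is $M-ps=M-\frac{pq}{q-p+1}$, which is $\leq 0$ exactly when $q\leq\frac{M(p-1)}{M-p}$ (and equals $0$ at the critical exponent). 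Hence in the subcritical/critical range the capacity of $\bar B_\rho$ stays bounded as $\rho\to\infty$ — indeed it tends to $0$ in the strictly subcritical case and is a fixed constant in the critical case (for the critical case one should instead use that $\dot C_{p,s}$ of a ball is comparable to a constant times $\rho^{M-ps}=$ const, then let the left side grow). Letting $\rho\to\infty$ in $\int_{\bar B_\rho}u^q\,dx\leq C\,\dot C_{p,\,s}(\bar B_\rho)$ forces $\int_{\mathbb G}u^q\,dx<\infty$ in the subcritical case, and then a second application — rescaling $u$, or using the dilation invariance of the inequality $-\Delta_{\mathbb G,\,p}\,u\geq u^q$ only up to the homogeneity deficit — should push $\int_{\mathbb G}u^q\,dx=0$, whence $u\equiv 0$, a contradiction.

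More cleanly, I would argue as follows in all cases $q\leq\frac{M(p-1)}{M-p}$ at once. From the converse/pointwise part one has access to the lower bound philosophy, but here the cheapest route is: since $u\not\equiv 0$ and $u$ is $p$-superharmonic with $-\Delta_{\mathbb G,\,p}\,u=\mu\geq u^q\,dx\not\equiv 0$, the measure $\omega$ (or simply $\mu$) has $\mu(\bar B_{\rho_0})=:c_0>0$ for some fixed ball; by the lower Wolff potential estimate for $p$-superharmonic functions on $\mathbb G$ (the standard Kilpeläinen--Malý bound, valid in this setting), $u(x)\geq c\,{\mathbf W}^\infty_{1,p}\mu(x)\gtrsim \big(\mu(\bar B_{\rho_0})\big)^{1/(p-1)}\,d_{cc}(x,x_0)^{-\frac{M-p}{p-1}}$ for $d_{cc}(x,x_0)$ large, since $\mu(B_t(x))\geq c_0$ for $t\gtrsim d_{cc}(x,x_0)$. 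Then
$$\int_{\{d_{cc}(x,x_0)>\rho_0\}} u^q\,dx\gtrsim \int_{\{d_{cc}(x,x_0)>\rho_0\}} d_{cc}(x,x_0)^{-\frac{(M-p)q}{p-1}}\,dx,$$
and this integral diverges precisely when $\frac{(M-p)q}{p-1}\leq M$, i.e. $q\leq\frac{M(p-1)}{M-p}$. So $\int_{\mathbb G}u^q\,dx=\infty$; on the other hand statement (i) with $E=\bar B_\rho$ and the capacity bound $\dot C_{p,s}(\bar B_\rho)\leq c\,\rho^{M-ps}\leq c$ (bounded uniformly in $\rho$ when $M-ps\leq 0$) gives $\int_{\bar B_\rho}u^q\,dx\leq C$ for all $\rho$, hence $\int_{\mathbb G}u^q\,dx<\infty$ — contradiction. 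Therefore no nontrivial solution exists.

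The main obstacle I anticipate is making the two quantitative estimates fit together cleanly: on the one hand the sharp computation of the Riesz capacity of a Carnot--Carath\'eodory ball, $\dot C_{p,\,s}(\bar B_\rho)\asymp \rho^{M-ps}$, including the boundary behavior at the critical exponent $M-ps=0$ where one must be slightly careful (the capacity of large balls is bounded but need not tend to $0$); and on the other hand the lower bound for $u$ via the Wolff potential ${\mathbf W}^\infty_{1,p}\mu$, which requires knowing that the Kilpeläinen--Malý-type lower pointwise estimate for $p$-superharmonic functions is available on Carnot groups and that $u$ genuinely inherits a nontrivial Riesz measure with positive mass on some fixed ball (this in turn uses $u\not\equiv 0$ together with $-\Delta_{\mathbb G,\,p}\,u\geq u^q$). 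Both ingredients are routine in the Euclidean theory and their analogues on $\mathbb G$ follow from the homogeneous structure and the potential-theoretic framework set up in the paper, but the bookkeeping of constants and the treatment of the endpoint case $q=\frac{M(p-1)}{M-p}$ is where the real care is needed.
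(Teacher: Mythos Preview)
Your argument is correct, but it takes a longer route than the paper's and carries a small misconception about the critical case. The paper's proof is much shorter: it shows directly that whenever $\alpha s\ge M$ (here $\alpha=p$, $s=q/(q-p+1)$, and $\alpha s\ge M$ is exactly the condition $q\le M(p-1)/(M-p)$) the Riesz capacity $\dot C_{\alpha,s}(E)$ vanishes for \emph{every} compact set $E$, not merely that it stays bounded as $E$ grows. This is done via the dual definition \eqref{dualR}: for any nonzero measure $\mu$ supported in a ball $B_R(e)$ one has ${\mathbf I}_\alpha*\mu(x)\gtrsim \mu(B_R(e))\,(|x|+R)^{-(M-\alpha)}$, and since $(M-\alpha)s'\le M$ this forces $\|{\mathbf I}_\alpha*\mu\|_{L^{s'}(\mathbb G)}=\infty$. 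Once the capacity is known to be zero, statement~(i) of Theorem~\ref{Gmain1} gives $\int_E u^q\,dx=0$ for every compact $E$, hence $u\equiv0$ immediately; no lower Wolff-potential estimate is needed. In particular, your worry about the endpoint $q=M(p-1)/(M-p)$ evaporates: there $(M-\alpha)s'=M$ exactly, so $\int(|x|+R)^{-M}\,dx$ still diverges (logarithmically), and the capacity is again zero, not a positive constant as you guessed. Your detour through the decay bound $u(x)\gtrsim d_{cc}(x,x_0)^{-(M-p)/(p-1)}$ and the divergence of $\int u^q$ is valid and gives a nice alternative way to see the obstruction, but it is superfluous once one realizes the capacity itself vanishes.
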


The proofs of Theorem \ref{Gmain1} and Corollary \ref{LiouvilleT} will be given in Sect. \ref{secv}. 

\section{Preliminaries on Carnot groups}\label{Cgroup}
Let $\mathbb G$ be a Lie group, i.e., a differentiable manifold endowed with a group structure such that the map
$\mathbb G\times\mathbb G\rightarrow\mathbb G$ defined by $(x,y)\mapsto xy^{-1}$ is $C^\infty$. Here $y^{-1}$ is
the inverse of $y$ and $xy^{-1}$ denotes the group multiplication of $x$ by $y^{-1}$. We will denote by
$$L_{x_{0}}(x)=x_{0}x,\quad\quad R_{x_{0}}(x)=xx_{0},$$
respectively, the left- and right-translations on $\mathbb G$. A vector field $X$ on $\mathbb G$ is called
left-invariant if for each $x_{0}\in\mathbb G$,
$$dL_{x_0}(X(x))=X(x_0 x)$$
for all $x\in\mathbb G$, i.e., $dL_{x_0}\circ X=X\circ L_{x_0}$. Here $dL_{x_0}$ is the differential
of $L_{x_0}$. Under the Lie bracket operation on vector fields, the set of left-invariant
vector fields on $\mathbb G$ forms a Lie algebra called the Lie algebra of $\mathbb G$ and is
denoted by  $\mathcal{G}$.
Note that we can identify $\mathcal{G}$ with the tangent space $\mathbb G_{e}$
to $\mathbb G$ at the {\it identity} $e\in \mathbb{G}$  via the isomorphism $\alpha: \mathcal{G}\rightarrow \mathbb G_{e}$
defined by $\alpha(X)=X(e)$ and thus ${\rm dim}\, \mathcal G={\rm dim}\, \mathbb G= N$, the {\it topological dimension}
of $\mathbb G$.

A Carnot group $\mathbb G$ of step $r$ is a connected and simply connected Lie group whose Lie algebra $\mathcal G$ admits a
nilpotent stratification of step $r$, i.e., $\mathcal G=V_1\oplus V_2 \oplus\cdots\oplus V_r$ with
$[V_{1}, V_{i}]=V_{i+1}$ for $i=1, \dots, r-1$, $V_r\not=\{0\}$ and $[V_1, V_r]={0}$, where $[\cdot,\cdot]$ denotes the Lie bracket.

Let  $\{X_{j}\}_{j=1}^{m}$ be a basis for the first layer $V_{1}$ (also called the {\it horizontal layer}) of
$\mathcal{G}$. Then for $2\leq i\leq r$, we can choose
a basis $\{X_{ij}\}$, $1\leq j\leq {\rm dim}(V_{i})$, for $V_{i}$ consisting of
commutators of length $i$. In particular, $X_{1j}=X_{j}$ for $j=1,\dots, m$, and
$m={\rm dim}(V_1)$. We then define an inner product $<\cdot \ , \cdot>$ on $\mathcal G$ by declaring the $X_{ij}$'s to be orthonormal.
Since $\mathbb G$  is connected and  simply connected, the exponential map $\rm exp$ is a global diffeomorphism from $\mathcal{G}$ onto
$\mathbb G$ (see \cite{VSC}, \cite{Va}). Thus
for each $x\in \mathbb G$, there is a unique $\hat{x}=(x_{ij})\in \RR^N$, $1\leq i\leq
r$, $1\leq j\leq {\rm dim}(V_{i})$, and $N=\sum_{i=1}^{r}{\rm dim}(V_{i})$, the
 topological dimension of $\mathbb{G}$, such that
$$x={\rm exp}\Big(\sum x_{ij}X_{ij}\Big).$$
Thus the maps $\phi_{ij}: \mathbb G\rightarrow \RR$, $1\leq i\leq r$, $1\leq j\leq {\rm dim}(V_i)$, defined by
$$\phi_{ij}(x)=x_{ij}\quad {\rm for~} x={\rm exp}\Big(\sum x_{ij}X_{ij}\Big),$$
form a system of global coordinates on $\mathbb G$ which are called the {\it exponential coordinates}. Henceforth we will always use
these  coordinates and simply write
$$x=(\phi_{ij}(x))=(x_{ij})\quad {\rm for~} x={\rm exp}\Big(\sum x_{ij}X_{ij}\Big).$$
Such an identification of $\mathbb G$ with its Lie algebra is justified by the Baker-Cambell-Hausdorff formula
(see, e.g., \cite{Va})
$${\rm exp}\Big(\sum x_{ij}X_{ij}\Big){\rm exp}\Big(\sum y_{ij}X_{ij}\Big)={\rm exp}\Big[H\Big(\sum x_{ij}X_{ij},
\sum y_{ij}X_{ij}\Big)\Big],$$
where $H(X,Y)=X+Y+\frac{1}{2}[X,Y]+\cdots$ with the dots indicating a finite linear combination of terms
containing commutators of order two and higher. If we define a group law $*$ on $\mathcal G$ by
$$X*Y=H(X,Y)$$
then the group $\mathbb G$ can be identified with $(\mathcal G,*)$ via the exponential coordinates.
Note that from the Baker-Cambell-Hausdorff formula we have
$$\phi_{ij}(x_0 x)=\phi_{ij}(x_0)+\phi_{ij}(x)+ P_{ij}(x_0,x),$$
where $P_{ij}(x_0,x)$ depends only on the coordinates $\phi_{kl}(x_0)$ and $\phi_{kl}(x)$ with $k<i$.
Thus the determinant of $dL_{x_0}$ is equal to $1$, and the same properties hold for the right translation
$R_{x_0}$ and its differential $dR_{x_0}$ as well. It follows that Lebesgue measure on $\mathcal G$
is lifted via the exponential mapping $\rm {exp}$ to a bi-invariant Haar measure on $\mathbb G$, which we will denote
by  $dx$.

For a given function $f: \mathbb G\rightarrow \RR$, the action of $X\in \mathcal G$ on $f$ is specified by the equation
$$X f(x)= \lim_{t\rightarrow 0}\frac{f(x \,{\rm exp}(tX))-f(x)}{t}=\frac{d}{dt}f(x\, {\rm exp}(tX))|_{t=0}.$$

For $t>0$, we define the dilation $\delta_{t}: \mathbb G\rightarrow \mathbb G$ by
$$\delta_{t}(x)=(t^{i}\phi_{ij}(x))$$
whose Jacobian determinant is everywhere equal to $t^{M}$, where
$$M=\sum_{i=1}^{r}i{\rm ~dim}(V_i)$$
is the {\it homogeneous dimension} of $G$. A {\it homogeneous norm} $|\cdot|$ on $\mathbb G$ is defined by
$$|x| =\Big( \sum \m{\phi_{ij}(x)}^{2r!/i}\Big)^{1/2r!},$$
which obviously satisfies $|\delta_{t}(x)|=t|x|$ and $|x^{-1}|=|x|$. This homogeneous norm
generates a quasi-metric $\rho(x,y)=|x^{-1}y|$ equivalent to the {\it Carnot-Carath\'eodory
metric} $d_{cc}$ on $\mathbb G$ (see \cite{NSW}, \cite{VSC}). Here
$$d_{cc}(x,y)=\inf_{\gamma}\int_{a}^{b}\sqrt{<\dot{\gamma}(t),\dot{\gamma}(t)>}dt,$$
where the infimum is taken over all curves $\gamma:[a,b]\rightarrow \mathbb G$ such that
$\gamma(a)=x$, $\gamma(b)=y$ and $\dot{\gamma}(t)\in V_1$ for all $t$. Such a curve is called a {\it horizontal curve}
connecting $x, y\in \mathbb G$. By Chow-Rashevsky's accessibility theorem (see \cite{Cho}, \cite{Ra}), any two points $x, y\in\mathbb G$
can be joined by a horizontal curve of finite length and hence $d_{cc}$ is  a left-invariant metric on $\mathbb G$.
We will denote by
$$B_{R}(x)=\{y\in \mathbb G: d_{cc}(x,y)<R\}$$
the Carnot-Carath\'eodory metric ball centered at $x$ with radius $R$. Note that there is $c=c(\mathbb G)$ such that
$$|B_{R}(x)|=c R^{M},$$
where for a Borel set $E\subset\mathbb G$ we write $|E|$ for $\int_{E}dx$. Moreover, by homogeneity and left-invariance we have
$$|\delta_{t}(E)|=t^{M}|E|,\quad d(\delta_{t}(x))=t^{M}dx,$$
and for $x, x', y\in \mathbb G$,
$$\quad d_{cc}(yx,yx')=d_{cc}(x,x'),\quad B_{R}(x)=x B_{R}(e).$$

\section{$p$-superharmonic functions on Carnot groups}\label{sec3}

Let $p> 1$ and let $\Om$ be an open set in $\mathbb G$. Recall from the previous section that
$X=(X_1,X_2,\dots,X_m)=(X_{11},X_{12},\dots,X_{1m})$ is an 
orthonormal basic for the first layer $V_1$ of $\mathcal G$. The horizontal Sobolev space $S^{1,\,p}(\Om)$ associated with the system $X$ is defined
by
$$S^{1,\,p}(\Om)=\{u\in L^{p}(\Om): X_{i}u\in L^{p}(\Om),~i=1,\dots,m\},$$
where $X_{i}u$ is understood in the sense of distributions, i.e.,
$$X_{i}u(\varphi)=-\int_{\Om}u X_{i}\varphi dx$$
for every $\varphi\in C_{0}^{\infty}(\Om)$. It is a Banach space equipped with the norm
$$||u||_{S^{1,\,p}(\Om)}=\Big(\int_{\Om}(|u|^p+|Xu|^p)\Big)^{\frac{1}{p}}.$$
The corresponding local Sobolev space  $S_{\rm loc}^{1,\,p}(\Om)$
is defined similarly, with $L^p_{\rm loc}(\Om)$ in place of $L^{p}(\Om)$. We will denote by $S^{1,\,p}_{0}(\Om)$
the completion of $C_{0}^{\infty}(\Om)$ under the norm  $||\cdot||_{S^{1,\,p}(\Om)}$.

Recall that for a smooth function $u$ on $\mathbb G$, the {\it $p$-Laplacian} of $u$ is defined by
$$\Delta_{\mathbb G,\,p}\, u=\sum_{i=1}^{m}X_{i}(|Xu|^{p-2}X_{i}u),$$
where $Xu=X_{1}uX_{1}+X_{2}uX_{2}+\cdots+X_{m}uX_{m}$ is  the {\it horizontal gradient} of $u$, and $|Xu|^{2}=\sum_{i=1}^{m}|X_{i}u|^2$.
A  function $u \in S_{\rm loc}^{1,\,p}(\Om)$ is said to be a weak solution to
\begin{equation}\label{weaksol}
\Delta_{\mathbb G,\,p}\, u=0
\end{equation}
if
$$\int_{\Om}|Xu|^{p-2}Xu\cdot X\varphi dx=0$$
for every $\varphi\in C_{0}^{\infty}(\Om)$. Here $Xu\cdot X\varphi=\sum_{i=1}^{m}X_iu X_i \varphi $.
It is known that every weak solution to \eqref{weaksol} has a continuous
representative (see \cite{TW}, \cite{HKM}), and such continuous solutions are called {\it $p$-harmonic} functions on $\Om$.
On the other hand, if $u\in S^{1,\,p}_{\rm loc}(\Om)$ and
$$\int_{\Om}|Xu|^{p-2}Xu\cdot X\varphi dx\geq0$$
for every $\varphi\in C_{0}^{\infty}(\Om)$, $\varphi\geq 0$ then $u$ is called a {\it supersolution} to \eqref{weaksol}.

A lower semicontinuous function $u:\Om\rightarrow(-\infty,\infty]$
is called {\it $p$-superharmo-nic} if $u$ is not identically infinite in each component of $\Om$, and if for all open sets $D$ such that
$\overline{D}\subset\Om$, and all functions $v\in C(\overline{D})$, $p$-harmonic in $D$, it follows that $v\leq u$ on $\partial D$
implies $v\leq u$ in $D$.

The following fundamental connection between supersolutions to \eqref{weaksol} and $p$-superharmonic functions can be found
in \cite{TW}.

\begin{proposition}\label{pro2.1} Let $u\in S^{1,\,p}_{\rm loc}(\Om)$ be a supersolution to \eqref{weaksol}. Let
$$\underline{u}(x)={\rm ess}\liminf_{y\rightarrow x} u(y).$$
Then $\underline{u}$ is $p$-superharmonic and $u=\underline{u}$ a.e.
\end{proposition}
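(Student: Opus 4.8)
The plan is to follow the standard scheme of nonlinear potential theory (cf.\ \cite{HKM} in the Euclidean case), fed by the subelliptic De~Giorgi--Nash--Moser estimates for $\Delta_{\mathbb G,\,p}$ available in \cite{TW}: continuity of $p$-harmonic functions, local boundedness of subsolutions, and the weak Harnack inequality for nonnegative supersolutions. Write $\underline u(x)={\rm ess}\liminf_{y\to x}u(y)=\lim_{r\to 0}{\rm ess}\inf_{B_r(x)}u$, the last equality holding because $r\mapsto{\rm ess}\inf_{B_r(x)}u$ is nondecreasing as $r\downarrow 0$. First I would dispose of two elementary facts. (a) $\underline u$ is lower semicontinuous: if $\underline u(x_0)>t$ then ${\rm ess}\inf_{B_r(x_0)}u>t$ for some $r$, whence $\underline u>t$ on $B_{r/2}(x_0)$. (b) $\underline u\le u$ a.e.: for each rational $t$, at every density point of $\{u<t\}$ one has $\underline u\le t$, so outside a null set $\underline u(x)\le\inf\{t\in\QQ:\ t>u(x)\}=u(x)$. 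I would also record that $\min\{u,\lambda\}$ is again a supersolution and $(\lambda-u)^+$ a subsolution, so that the local boundedness estimate for subsolutions makes $u$ locally essentially bounded below; in particular $m_R(x):={\rm ess}\inf_{B_{2R}(x)}u$ is finite for small $R$ and increases to $\underline u(x)$ as $R\downarrow 0$.

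Next I would prove $u=\underline u$ a.e.\ using the weak Harnack inequality. Fix $x_0$ and a small rational $R$ with $\overline{B_{2R}(x_0)}\subset\Om$. Since $u-m_R(x_0)\ge 0$ is a supersolution in $B_{2R}(x_0)$, the weak Harnack inequality gives, for $0<\rho\le R$, $\left(\frac{1}{|B_\rho(x_0)|}\int_{B_\rho(x_0)}(u-m_R(x_0))^{s_0}\,dy\right)^{1/s_0}\le C\big({\rm ess}\inf_{B_\rho(x_0)}u-m_R(x_0)\big)$, with structural constants $s_0>0$, $C\ge 1$. Letting $\rho\to 0$ at a Lebesgue point $x_0$ of $u$ --- a.e.\ point, and simultaneously for all rational $R$ --- this becomes $u(x_0)-m_R(x_0)\le C\big(\underline u(x_0)-m_R(x_0)\big)$, i.e.\ $u(x_0)-\underline u(x_0)\le (C-1)\big(\underline u(x_0)-m_R(x_0)\big)$. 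Letting $R\to 0$ so that $m_R(x_0)\to\underline u(x_0)$ forces $u(x_0)\le\underline u(x_0)$; together with (b) this yields $u=\underline u$ a.e.

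It remains to show $\underline u$ is $p$-superharmonic. It is lower semicontinuous by (a), and, being a.e.\ equal to $u\in L^p_{\rm loc}$, is finite a.e., hence not identically $+\infty$ on any component of $\Om$. Let $D$ be open with $\overline D\subset\Om$ and $v\in C(\overline D)$ be $p$-harmonic in $D$ with $v\le\underline u$ on $\partial D$; I must deduce $v\le\underline u$ in $D$. Fix $\ve>0$. Since $v-\underline u$ is upper semicontinuous and $\le 0$ on $\partial D$, the set $K_\ve=\{x\in\overline D:\ v(x)-\underline u(x)\ge\ve\}$ is compact and contained in $D$; choose a bounded open set $D'$ with $K_\ve\Subset D'\Subset D$. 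On $D\setminus K_\ve$ one has $v-\ve<\underline u\le u$ a.e., so $\varphi:=(v-\ve-u)^+$, which lies in $S^{1,\,p}(D')$ because $v,u\in S^{1,\,p}_{\rm loc}(D)$, vanishes a.e.\ in a neighborhood of $\partial D'$ and therefore belongs to $S^{1,\,p}_0(D')$. Testing the equation for $v$ and the supersolution inequality for $u+\ve$ against the admissible function $\varphi\ge 0$, subtracting, and invoking the strict monotonicity of $\xi\mapsto|\xi|^{p-2}\xi$ together with the Poincar\'e inequality on $D'$ forces $X\varphi=0$ a.e.\ and then $\varphi=0$, i.e.\ $v\le u+\ve$ a.e.\ in $D'$. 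Hence, for every $x\in K_\ve\subset D'$, $\underline u(x)\ge{\rm ess}\liminf_{y\to x}(v(y)-\ve)=v(x)-\ve$ since $v$ is continuous, so $v(x)\le\underline u(x)+\ve$; as $v<\underline u+\ve$ off $K_\ve$, we obtain $v\le\underline u+\ve$ on all of $\overline D$, and letting $\ve\to 0$ gives $v\le\underline u$ in $D$, completing the proof.

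The main obstacle is twofold. One part is purely technical: having the subelliptic Moser iteration (weak Harnack for supersolutions, local boundedness for subsolutions, continuity of $p$-harmonic functions) in exactly the form needed in the geometry of $\mathbb G$ --- this is what \cite{TW} supplies, and it is the only place where the structure of the Carnot group genuinely enters. The other, internal to the argument, is the passage in the last paragraph from the pointwise boundary inequality in the definition of $p$-superharmonicity to the Sobolev boundary condition $\varphi\in S^{1,\,p}_0(D')$ needed to run the comparison principle; the localization through $K_\ve$ and the slightly smaller domain $D'$ is the device that bridges that gap.
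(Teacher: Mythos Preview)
The paper does not give a proof of this proposition at all; it simply attributes the result to Trudinger--Wang \cite{TW} (see the sentence introducing the proposition). So there is no ``paper's own proof'' to compare with. Your argument is the standard one from nonlinear potential theory (essentially the scheme in \cite{HKM}, transplanted to $\mathbb G$ via the subelliptic estimates of \cite{TW}), and it is correct in outline and in substance.

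Two small points are worth tightening. First, in the weak Harnack step you pass to the limit $\rho\to 0$ on the left-hand side ``at a Lebesgue point of $u$'' to get $u(x_0)-m_R(x_0)$. When $s_0<1$ the $s_0$-power mean is \emph{below} the arithmetic mean, so an ordinary $L^1$-Lebesgue point does not directly give the lower bound you need; you should note that an $L^1$-Lebesgue point is automatically an $L^{s_0}$-Lebesgue point for $s_0\le 1$ (by the concavity inequality $|a^{s_0}-b^{s_0}|\le|a-b|^{s_0}$), which makes $\big(\fint_{B_\rho}(u-m_R)^{s_0}\big)^{1/s_0}\to u(x_0)-m_R$ cleanly. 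Second, the weak Harnack inequality is normally stated with a fixed ratio between the two balls (e.g.\ $B_\rho$ versus $B_{\rho/2}$), not with the same ball on both sides; this is harmless for your argument since both ${\rm ess\,inf}_{B_\rho}u$ and ${\rm ess\,inf}_{B_{\rho/2}}u$ converge to $\underline u(x_0)$, but it would be cleaner to quote the inequality in its standard form. Neither issue is a genuine gap.
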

From this proposition it follows that we may assume all supersolutions to be lower semicontinuous. Therefore a function $u$ is a supersolution to
\eqref{weaksol} if and only if $u$ is $p$-superharmonic and belongs to $S^{1,\,p}_{\rm loc}(\Om)$.

Note that a $p$-superharmonic function $u$ does not
necessarily belong to $S^{1,\,p}_{{\rm loc}}(\Om)$, but its
truncation $\min\{u,k\}$ does for every integer $k$. Using this we set
\begin{eqnarray*}
Xu=\lim_{k\ra\infty} \, X [ \, \min\{u,k\}],
\end{eqnarray*}
defined a.e. If either $u\in L^{\infty}(\Om)$ or $u\in S^{1,\,1}_{{\rm loc}}(\Om)$, then $Xu$
coincides with the regular distributional horizontal gradient of $u$. In general we have the following
gradient estimate \cite{TW} (see also \cite{HKM}).

\begin{proposition}[\cite{TW}]\label{gradient} Suppose u is $p$-superharmonic in $\Om$. Then $Xu$ belongs to $L^{r}_{{\rm loc}}(\Om)$,
where $r<\frac{M(p-1)}{M-1}$.
\end{proposition}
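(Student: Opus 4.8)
The plan is to reduce the claim to the corresponding Euclidean-type estimate by exploiting the by-now standard machinery for quasilinear equations, namely Caccioppoli-type inequalities combined with a truncation/weak-$L^r$ argument going back to the work on $p$-superharmonic functions in weighted Euclidean spaces (Kilpeläinen--Malý, Heinonen--Kilpeläinen--Martio). First I would recall that, since $u$ is $p$-superharmonic, each truncation $u_k=\min\{u,k\}$ lies in $S^{1,\,p}_{\rm loc}(\Om)$ and is a supersolution of \eqref{weaksol}; thus it suffices to obtain, for a fixed ball $B=B_\rho(x_0)$ with $B_{2\rho}(x_0)\Subset\Om$, a bound on $\|Xu_k\|_{L^r(B)}$ that is \emph{uniform in $k$}, for every $r<\frac{M(p-1)}{M-1}$. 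Letting $k\to\infty$ and using the definition $Xu=\lim_k X u_k$ (a.e.) together with Fatou's lemma then yields $Xu\in L^r_{\rm loc}$.

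The core of the argument is a distribution-function estimate for $|Xu_k|$ on $B$. The key steps, in order, are: (1) a Caccioppoli inequality: testing the supersolution inequality for $u_k-\lambda$-type truncations (more precisely, for the functions $v_{j}=\min\{(u - T_{j})^+, T_{j+1}-T_{j}\}$ along a dyadic sequence of levels $T_j=2^j$, cut off by a fixed $\varphi\in C_0^\infty(B_{2\rho})$, $\varphi\equiv 1$ on $B$) to get
$$\int_{\{T_j<u\le T_{j+1}\}\cap B}|Xu|^{p}\,dx\le \frac{c}{\rho^{p}}\,(T_{j+1}-T_j)^{p-1}\int_{\{u>T_j\}\cap B_{2\rho}}(u-T_j)\,dx\le c\,\rho^{M-p}\,2^{j(p-1)}\,\frac{|\{u>T_j\}\cap B_{2\rho}|}{|B_{2\rho}|}\,;$$
here one also uses that $u\in L^1_{\rm loc}$, which follows from $u$ being $p$-superharmonic (it lies in $L^s_{\rm loc}$ for some $s>0$, or one may invoke the pointwise Wolff-potential lower bound). (2) A weak-type estimate for $u$ itself: the measure of the superlevel sets $|\{u>t\}\cap B|$ decays — again a consequence of the standard theory, e.g. $u\in L^{s}_{\rm loc}$ for all $s<\frac{M(p-1)}{M-p}$ when $p<M$ (and all $s<\infty$ when $p\ge M$), which is proved by the same Caccioppoli/Moser-type iteration adapted to the homogeneous-space geometry. (3) Chebyshev/interpolation: combine (1) and (2) by summing a geometric-type series over $j$; one estimates, for a threshold $L>0$,
$$|\{x\in B:\ |Xu_k(x)|>L\}|\ \le\ \sum_{j}|\{T_j<u\le T_{j+1}\}\cap\{|Xu|>L\}\cap B|\ \le\ \sum_{j}\frac{1}{L^{p}}\int_{\{T_j<u\le T_{j+1}\}\cap B}|Xu|^{p}\,dx\ \Big/\ \cdots$$
and, balancing the contribution of low levels ($u\le L^{a}$ for a suitable exponent $a$) against high levels using (2), arrives at $|\{|Xu_k|>L\}\cap B|\le c\,L^{-r}$ for every $r<\frac{M(p-1)}{M-1}$, with $c$ independent of $k$. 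Since weak-$L^{r'}$ embeds in $L^{r}_{\rm loc}$ for $r<r'$, this gives the claim.

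The main obstacle — and the only place where the Carnot-group setting genuinely intervenes — is making the geometric ingredients work without the Besicovitch covering lemma and without a perfect dyadic grid, precisely the difficulties flagged in the introduction. Concretely, the Caccioppoli inequality and the Moser iteration both require: a valid Poincaré inequality on Carnot--Carathéodory balls for the horizontal gradient, the doubling property $|B_{2\rho}|=2^{M}|B_\rho|$ (which holds, with equality, by homogeneity), and good cutoff functions $\varphi\in C_0^\infty$ with $|X\varphi|\lesssim \rho^{-1}$ supported between concentric CC-balls. All of these are available in the literature on subelliptic equations (the horizontal Poincaré inequality of Jerison, and the regularity theory in \cite{TW}, \cite{HKM}), so the Euclidean proof transfers essentially verbatim once these tools are in hand; no dyadic decomposition or Besicovitch argument is needed for this particular proposition. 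I would therefore expect the write-up to be short, citing \cite{TW} and \cite{HKM} for the subelliptic Caccioppoli and Moser estimates and then running the level-set summation exactly as in the weighted Euclidean case, with the homogeneous dimension $M$ playing the role of the dimension throughout.
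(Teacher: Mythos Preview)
The paper does not prove this proposition at all: it is stated with attribution to \cite{TW} (see also \cite{HKM}) and used as a black box. So there is no ``paper's own proof'' to compare against; the statement is imported from Trudinger--Wang.

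That said, your sketch is the standard Kilpel\"ainen--Mal\'y\,/\,Heinonen--Kilpel\"ainen--Martio argument and is essentially what \cite{TW} carries out in the subelliptic setting: truncate, apply a Caccioppoli inequality on dyadic level sets $\{T_j<u\le T_{j+1}\}$, combine with the weak-$L^{M(p-1)/(M-p)}$ bound for $u$ itself, and sum to obtain a weak-$L^{M(p-1)/(M-1)}$ bound on $|Xu|$ uniform in the truncation parameter. Your identification of the needed geometric ingredients (doubling, horizontal Poincar\'e, cutoffs with $|X\varphi|\lesssim\rho^{-1}$) is accurate, and you are right that no dyadic grid or Besicovitch lemma is required here. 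The displayed Caccioppoli line and the ``$/\cdots$'' fragment are a bit garbled and would need to be written out cleanly, but the strategy is correct and matches the cited source.
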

From  Proposition \ref{gradient} and the dominated convergence theorem we have
\begin{eqnarray*}
\int_{\Om}|Xu|^{p-2}Xu\cdot X\varphi dx=\lim_{k\ra\infty}\int_{\Om}|Xu_{k}|^{p-2}Xu_{k}\cdot X\varphi dx\geq 0
\end{eqnarray*}
whenever $\varphi\in C^{\infty}_{0}(\Om)$ and $\varphi\geq 0$, where $u_{k}=\min\{u, k\}$.
Thus the map
 $$\varphi\mapsto \int_{\Om}|Xu|^{p-2}Xu\cdot X\varphi dx$$
is a nonnegative distribution in $\Om$ for a $p$-superharmonic function  $u$. It follows that there is
a positive (not necessarily finite) Radon measure denoted by $\mu[u]$ such that
$$\int_{\Om}|Xu|^{p-2}Xu \cdot X\varphi dx=\int_{\Om}\varphi d\mu[u],\quad\quad \forall \varphi\in C_{0}^{\infty}(\Om),$$
or in short we write
\begin{eqnarray*}
-\Delta_{\mathbb G,\,p}\, u=\mu[u] \quad {\rm in} \quad \Om.
\end{eqnarray*}

The close relation between $p$-superharmonic functions and measures generated by them is established in the 
weak continuity theorem due to Trudinger and Wang \cite{TW}.

\begin{theorem}[\cite{TW}]\label{weakcont} Suppose that $\{u_{n}\}$ is a sequence of nonnegative
$p$-superharmonic functions in $\Om$ that converges a.e. to  a
$p$-superharmonic function $u$. Then
the sequence of corresponding measures $\{\mu[u_{n}]\}$ converges to $\mu[u]$ weakly, i.e.,
$$\lim_{n\rightarrow\infty}\int_{\Om}\varphi \, d\mu[u_{n}]=\int_{\Om}\varphi \, d\mu[u],$$
for all $\varphi\in C_{0}^{\infty}(\Om)$.
\end{theorem}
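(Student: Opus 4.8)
The plan is to reduce the theorem to two assertions about horizontal gradients: \textbf{(A)} a uniform bound $\sup_n\norm{Xu_n}_{L^r(\Om')}<\infty$ on every $\Om'\Subset\Om$, for some exponent $r$ with $p-1<r<\frac{M(p-1)}{M-1}$; and \textbf{(B)} a.e.\ convergence $Xu_n\to Xu$ in $\Om$. Granting these, the vector fields $|Xu_n|^{p-2}Xu_n$ are bounded in $L^{r/(p-1)}(\Om')$ with $r/(p-1)>1$, hence equi-integrable on the bounded set $\Om'$, and they converge a.e.\ to $|Xu|^{p-2}Xu$ by (B); so Vitali's theorem gives convergence in $L^{1}(\Om')$, and since any fixed $\varphi\in C^\infty_0(\Om)$ can be taken to be supported in such an $\Om'$ and $X\varphi\in L^\infty$, we conclude $\int_\Om|Xu_n|^{p-2}Xu_n\cdot X\varphi\,dx\to\int_\Om|Xu|^{p-2}Xu\cdot X\varphi\,dx$, which is the asserted weak convergence $\mu[u_n]\to\mu[u]$. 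Because the limit is forced, I am free to pass to subsequences throughout.

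For \textbf{(B)} I would first do the bounded case and then truncate. Fix $k\in\NN$ and put $u_n^k=\min\{u_n,k\}$, $u^k=\min\{u,k\}$; by Proposition \ref{pro2.1} these are supersolutions in $S^{1,p}_{\rm loc}(\Om)$, and being bounded by $k$ they obey a Caccioppoli estimate (test the supersolution inequality for $u_n^k$ against $(k-u_n^k)\eta^p$) that is uniform in $n$, so after extracting a subsequence $u_n^k\rightharpoonup u^k$ weakly in $S^{1,p}(\Om')$, the limit being identified by the a.e.\ limit. To upgrade to a.e.\ convergence of gradients I would run the classical monotonicity argument: for a cut-off $\eta\in C^\infty_0(\Om')$, $\eta\ge0$, and $\delta>0$, test $-\Delta_{\mathbb G,p}u_n^k=\mu[u_n^k]$ and $-\Delta_{\mathbb G,p}u^k=\mu[u^k]$ against $\eta\,T_\delta(u_n^k-u^k)$, where $T_\delta(s)=\max\{-\delta,\min\{\delta,s\}\}$. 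Since $|T_\delta|\le\delta$ and $\sup_n\mu[u_n^k]({\rm supp}\,\eta)<\infty$ (test with a cut-off and use H\"older together with the $S^{1,p}$ bound), the right-hand sides are $O(\delta)$ uniformly in $n$; the part of the left-hand side supported on $\{|u_n^k-u^k|\ge\delta\}$ vanishes as $n\to\infty$ by the weak convergence, and one is left with
$$\limsup_{n\to\infty}\int_{\{|u_n^k-u^k|<\delta\}}\eta\,\big(|Xu_n^k|^{p-2}Xu_n^k-|Xu^k|^{p-2}Xu^k\big)\cdot(Xu_n^k-Xu^k)\,dx\le C\delta.$$
Letting $\delta\downarrow0$ (the event $\{|u_n^k-u^k|<\delta\}$ eventually holds a.e.) and invoking Fatou, the nonnegative integrand tends to $0$ a.e., so by the strict monotonicity of $\xi\mapsto|\xi|^{p-2}\xi$ we get $Xu_n^k\to Xu^k$ a.e. Finally, for a.e.\ $x$ one has $u(x)<\infty$ and $u_n(x)\to u(x)$; fixing an integer $k>u(x)+1$ gives $u_n(x)<k$ for all large $n$, so $Xu_n^k(x)=Xu_n(x)$ for those $n$ while $Xu^k(x)=Xu(x)$, and therefore $Xu_n(x)\to Xu(x)$ (the exceptional null sets, indexed by $k\in\NN$ and by $n$, have null union). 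This proves (B).

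For \textbf{(A)} I would use the quantitative local form of Proposition \ref{gradient} (cf.\ \cite{TW}): on a ball $B=B_\rho(x_0)$ with $B_{2\rho}(x_0)\Subset\Om$, $\norm{Xu_n}_{L^r(B)}$ is controlled by the average of $u_n$ over $B_{2\rho}(x_0)$ together with $\mu[u_n](B_{2\rho}(x_0))$; covering $\overline{\Om'}$ by finitely many such balls, (A) follows once $\sup_n\int_{B_{2\rho}(x_0)}u_n\,dx<\infty$ and $\sup_n\mu[u_n](B_{2\rho}(x_0))<\infty$. These uniform bounds I would get from the pointwise Wolff potential estimates of Kilpel\"ainen--Mal\'y type valid in this setting: the lower bound $\mathbf W_{1,p}^{\rho}\mu[u_n](x)\le C\,u_n(x)$ together with the upper bound $u_n(x)\le C\big(\inf_{B_\rho(x)}u_n+\mathbf W_{1,p}^{2\rho}\mu[u_n](x)\big)$. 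Since $u_n(x)\to u(x)<\infty$ for a.e.\ $x$ and $p$-superharmonic functions satisfy $u_n(x)={\rm ess}\liminf_{y\to x}u_n(y)$, one first gets $\sup_n\mathbf W_{1,p}^{\rho}\mu[u_n](x)<\infty$ for a.e.\ $x$, hence $\sup_n\mu[u_n](B_t(x))<\infty$ for each small $t$; a covering argument promotes this to uniform mass bounds on compact subsets of $\Om$, and feeding these into the upper Wolff bound gives the uniform local $L^1$ bound on $u_n$.

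I expect \textbf{(A)} to be the main obstacle: there is an intrinsic circularity between uniform control of $Xu_n$, of the masses $\mu[u_n]$, and of $\norm{u_n}_{L^1_{\rm loc}}$, and untangling it is exactly what the pointwise nonlinear potential estimates are for --- which is also why the scheme should survive on Carnot groups of arbitrary step, where the Besicovitch covering lemma fails but those estimates still hold. A lesser nuisance is the degeneracy of $\xi\mapsto|\xi|^{p-2}\xi$ for $p\ne2$ in step (B), handled by the usual two-regime inequalities $(|\xi|^{p-2}\xi-|\zeta|^{p-2}\zeta)\cdot(\xi-\zeta)\gtrsim|\xi-\zeta|^{p}$ for $p\ge2$ and $\gtrsim|\xi-\zeta|^{2}(|\xi|+|\zeta|)^{p-2}$ for $1<p<2$.
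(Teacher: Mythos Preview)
The paper does not supply a proof of this statement: Theorem~\ref{weakcont} is quoted from \cite{TW} and used as a black box, so there is no ``paper's own proof'' to compare against. Your outline is in fact close to the way \cite{TW} actually establishes the result---uniform local integrability of the sequence, then a.e.\ convergence of the horizontal gradients via truncation and the monotonicity of $\xi\mapsto|\xi|^{p-2}\xi$, and finally Vitali to pass to the limit in the distributional identity.

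That said, your step \textbf{(A)} as written has a genuine circularity, and you were right to flag it. In the development of \cite{TW} the Wolff potential bounds (Theorem~\ref{potential} here) are obtained \emph{after} weak continuity, and their proof uses weak continuity to pass from smooth data to general measures. So invoking the upper Wolff estimate to manufacture the uniform local $L^1$ bound on $u_n$ is not legitimate if your goal is to prove Theorem~\ref{weakcont} from scratch. The clean way out is the one \cite{TW} actually takes: for nonnegative $p$-superharmonic $u_n$ the weak Harnack inequality (valid in this subelliptic setting and logically independent of both Theorems~\ref{weakcont} and \ref{potential}) gives, for some small $s>0$,
\[
\Big(\fint_{B_{2\rho}}u_n^{\,s}\,dx\Big)^{1/s}\le C\,\inf_{B_\rho}u_n\le C\,u_n(x_0)
\]
for any $x_0\in B_\rho$; choosing $x_0$ to be a point of a.e.\ convergence yields $\sup_n\norm{u_n}_{L^s(B_{2\rho})}<\infty$. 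The quantitative local gradient estimate behind Proposition~\ref{gradient} then turns this into $\sup_n\norm{Xu_n}_{L^r(B_\rho)}<\infty$ for $p-1<r<\tfrac{M(p-1)}{M-1}$, and the uniform local mass bound on $\mu[u_n]$ follows \emph{a posteriori} by testing against a cutoff and using $|Xu_n|^{p-1}\in L^{r/(p-1)}_{\rm loc}$ uniformly. With this reordering your argument goes through; the rest of your sketch (part \textbf{(B)} and the Vitali step) is fine.
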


The following   pointwise estimates by means of Wolff's potentials were also proved in \cite{TW} which extend earlier results due to
Kilpel\"ainen and Mal\'y \cite{KM2} to the subelliptic setting. They will play an essential role in this paper.

\begin{theorem}[\cite{TW}]\label{potential}
Suppose  $u \ge 0$ is a $p$-superharmonic function in $B_{3r}(x)$.
If $\mu=-\Delta_{\mathbb G,\,p}\, u$, then
\begin{equation}\label{IKM}
C_{1} \, {\rm\bf W}_{1, \, p}^{r} \, \mu(x)\leq u(x)\leq C_{2}\, \inf_{B_{r}(x)} \, u + C_{3} \,
{\rm\bf W}_{1, \, p}^{2r} \, \mu(x),
\end{equation}
where $C_{1}, C_{2}$ and $C_{3}$ are positive constants depending only on $M$ and $p$. Consequently, if 
$-\Delta_{\mathbb G,\,p}\, u=\mu$ on $\mathbb G$ and $\inf_{\mathbb G}u =0$ then
\begin{equation}\label{IKM-G}
C_{1} \, {\rm\bf W}_{1, \, p}^{\infty} \, \mu(x)\leq u(x) \leq C_{3} \,
{\rm\bf W}_{1, \, p}^{\infty} \, \mu(x).
\end{equation}
\end{theorem}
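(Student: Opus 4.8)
\medskip\noindent\textbf{Proof proposal for Theorem~\ref{potential}.}

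The plan is to transplant the Kilpel\"ainen--Mal\'y iteration --- which produces Wolff potential bounds for $p$-superharmonic functions in $\RR^{n}$ --- to the Carnot setting. All the analytic ingredients are available: $(\mathbb G,d_{cc},dx)$ is an Ahlfors $M$-regular doubling metric measure space ($|B_{t}(x)|=c\,t^{M}$) on which the horizontal gradient satisfies a $(1,p)$-Poincar\'e inequality, and on which the subelliptic $p$-Laplacian carries the full De Giorgi--Nash--Moser--Serrin theory: local boundedness of subsolutions, the weak Harnack inequality for nonnegative supersolutions, Harnack and interior H\"older continuity of $p$-harmonic functions, and the comparison principle (see \cite{TW}, \cite{HKM} and the Sobolev--Poincar\'e theory on Carnot groups). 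I would also use the capacity estimate ${\rm cap}_{p}(\overline{B_{t/2}(x)},B_{t}(x))\simeq t^{M-p}$, which follows from $|B_{t}(x)|=c\,t^{M}$ and Poincar\'e. Since the estimates are purely local, the hypotheses already guarantee that $\mu=-\Delta_{\mathbb G,\,p}\,u$ is a positive Radon measure on $B_{3r}(x)$ with $\mu(B_{t}(x))<\infty$ for all $t\le r$.

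For the lower bound I would fix $x$, set $r_{j}=2^{-j}r$ and $a_{j}=\inf_{B_{r_{j}}(x)}u\in[0,\infty)$; by lower semicontinuity $a_{j}\uparrow u(x)$. For each $j$ let $w_{j}\ge 0$ solve $-\Delta_{\mathbb G,\,p}\,w_{j}=\mu_{B_{r_{j}}(x)}$ in $B_{r_{j}}(x)$ with zero boundary data. Since $u$ and $w_{j}+a_{j}$ are $p$-superharmonic in $B_{r_{j}}(x)$, obey $-\Delta_{\mathbb G,\,p}\,u\ge\mu_{B_{r_{j}}(x)}=-\Delta_{\mathbb G,\,p}\,(w_{j}+a_{j})$, and satisfy $u\ge a_{j}=w_{j}+a_{j}$ on $\partial B_{r_{j}}(x)$, the comparison principle gives $u\ge w_{j}+a_{j}$ in $B_{r_{j}}(x)$, hence $a_{j+1}\ge a_{j}+\inf_{B_{r_{j+1}}(x)}w_{j}$. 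Testing the equation for $w_{j}$ against the $p$-capacitary potential of $\overline{B_{r_{j+1}}(x)}$ in $B_{r_{j}}(x)$ and using the weak Harnack inequality gives $\inf_{B_{r_{j+1}}(x)}w_{j}\ge c\,(\mu(B_{r_{j+1}}(x))/r_{j}^{M-p})^{1/(p-1)}$. Summing the telescoping inequalities over $j$, comparing the resulting series with the integral defining $\mathbf{W}_{1,\,p}^{r}\mu(x)$ (each summand controls, via doubling, the integrand over a dyadic subinterval of $(0,r]$), and absorbing the outermost scale using the weak Harnack estimate for $u$ itself, one obtains $\mathbf{W}_{1,\,p}^{r}\mu(x)\le C\,u(x)$, which is the left inequality in \eqref{IKM}.

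For the upper bound --- the crux of the argument --- I would first replace $u$ by $\min\{u,k\}$ and pass to the limit $k\to\infty$ only at the end, so that we may assume $u$ bounded. The engine is a one-scale Caccioppoli--Sobolev estimate for the truncations $(u-\lambda)_{+}$: testing $-\Delta_{\mathbb G,\,p}\,u=\mu$ against $\eta^{p}(u-\lambda)_{+}$ with a horizontal cutoff $\eta$ (legitimate because each $X_{i}$ is skew-adjoint on $C_{0}^{\infty}$, so integration by parts in the horizontal variables is available) and then invoking the subelliptic Sobolev--Poincar\'e inequality bounds a suitable average of $(u-\lambda)_{+}$ over $B_{\rho/2}(z)$ by $C\,(\mu(B_{2\rho}(z))/\rho^{M-p})^{1/(p-1)}$ plus a term reproducing the excess of $u$ above $\lambda$ at the next scale. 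The plan is then to run the Kilpel\"ainen--Mal\'y stopping-time iteration over the balls $B_{r_{j}}(x)$ with levels $\lambda_{j}$ increasing by increments comparable to $(\mu(B_{r_{j-1}}(x))/r_{j-1}^{M-p})^{1/(p-1)}$, obtaining $\sup_{j}\lambda_{j}\le C\,\inf_{B_{r}(x)}u+C\,\mathbf{W}_{1,\,p}^{2r}\mu(x)$ while $\sup_{j}\lambda_{j}\ge c\,u(x)$ at Lebesgue points of $u$ (using that a $p$-superharmonic $u$ coincides with its ${\rm ess}\liminf$, cf. Proposition~\ref{pro2.1}); this is the right inequality in \eqref{IKM}. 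Finally \eqref{IKM-G} follows by letting $r\to\infty$ in \eqref{IKM}: both $\mathbf{W}_{1,\,p}^{r}\mu(x)$ and $\mathbf{W}_{1,\,p}^{2r}\mu(x)$ increase to $\mathbf{W}_{1,\,p}^{\infty}\mu(x)$ by monotone convergence, while $\inf_{B_{r}(x)}u\downarrow\inf_{\mathbb G}u=0$ because $\bigcup_{r>0}B_{r}(x)=\mathbb G$, so the term $C_{2}\inf_{B_{r}(x)}u$ disappears in the limit.

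The step I expect to be the main obstacle is the upper bound: the Kilpel\"ainen--Mal\'y iteration has to be executed entirely in terms of the intrinsic balls $B_{t}(x)$ and the doubling Haar measure, with every appeal to a dyadic grid or to the Besicovitch covering lemma replaced by the doubling property and a Vitali-type covering argument. This is feasible because the original scheme is metric in nature, but it is contingent on having the full subelliptic regularity package for $\Delta_{\mathbb G,\,p}$ (local boundedness, weak Harnack, H\"older continuity, comparison) together with the horizontal Sobolev--Poincar\'e inequality in force; granting these, I expect the iteration to carry over with only notational changes.
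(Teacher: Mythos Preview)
The paper does not prove this theorem; it is quoted verbatim as a result of Trudinger and Wang \cite{TW}, who carried the Euclidean Wolff potential estimates of Kilpel\"ainen and Mal\'y \cite{KM2} over to the subelliptic $p$-Laplacian. Your proposal is exactly that transplant: run the Kilpel\"ainen--Mal\'y iteration on Carnot--Carath\'eodory balls, using the subelliptic De Giorgi--Nash--Moser package (weak Harnack, Caccioppoli, Sobolev--Poincar\'e, comparison) in place of its Euclidean counterpart, and then let $r\to\infty$ for \eqref{IKM-G}. So your outline is correct and coincides with the approach of the cited source; there is nothing in the present paper to compare it against beyond the citation.

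One small caution on the lower bound: the comparison step ``$-\Delta_{\mathbb G,\,p}\,u\ge\mu_{B_{r_{j}}(x)}=-\Delta_{\mathbb G,\,p}\,w_{j}$ implies $u\ge w_{j}+a_{j}$'' is not the usual weak comparison principle, since a general $p$-superharmonic $u$ need not lie in $S^{1,p}_{\rm loc}$ and the measures on the two sides are only ordered, not equal. In practice one either works with the truncations $\min\{u,k\}$ (as you already do for the upper bound) or bypasses the auxiliary $w_{j}$ altogether and extracts the increment $a_{j+1}-a_{j}\gtrsim(\mu(B_{r_{j+1}}(x))/r_{j}^{M-p})^{1/(p-1)}$ directly from a Caccioppoli estimate for $(u-a_{j})$ tested against a capacitary potential, which is closer to the original \cite{KM2} argument and to what \cite{TW} actually does. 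Either route closes, but the second avoids the delicate measure-data comparison.
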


\section{Lane-Emden type equations and related inequalities}\label{sec4}

In this section we fix a standard mollifier $\zeta$ on $\mathbb{G}$, i.e., a function $\zeta\in C_{0}^{\infty}(\mathbb G)$ which
is radially decreasing and is supported in $\{x\in\mathbb G: |x|\leq 1\}$ such that $\int\zeta dx =1$.
Also, for $n\geq 1$ we denote by $\zeta_{n}$ the function defined by $\zeta_n(x)=\frac{1}{n}\zeta(\frac{x}{n})$. The following theorem
gives an existence result and global pointwise estimates   for a quasilinear equation with measure data.

\begin{theorem}\label{EXIST}
Suppose that $\Om$ is bounded and $\mu$ is a nonnegative finite measure on $\Om$. Let $u_n$ be the unique solution in $S_{0}^{1,\,p}(\Om)$ of
\begin{equation}\label{equ}
-\Delta_{\mathbb G,\, p}\, u_n= \zeta_{n}*\mu \quad {\rm in}\quad \Om.
\end{equation}
Then there is a subsequence $\{u_{n_k}\}$ of $\{u_n\}$ and a $p$-superharmonic function $u$ on $\Om$ such that
$$u=\lim_{k\rightarrow \infty}u_{n_{k}} \quad {\rm a.e.}$$

Moreover, $u$ solves the equation 
\begin{eqnarray}\label{TRUN}
\left\{\begin{array}{rcl}
-\Delta_{\mathbb G,\, p}\, u&=&\mu \quad {\rm in~} \Om,\\
u&=&0 \quad {\rm on~} \partial \Om,
\end{array}
\right.
\end{eqnarray}
in the sense of Definition \ref{DEFEQ}. 
\end{theorem}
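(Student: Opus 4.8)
The plan is to construct $u$ as an a.e. limit of the solutions $u_n$ to the regularized problems \eqref{equ}, and then to verify each requirement of Definition \ref{DEFEQ}: $p$-superharmonicity, the truncation condition $\min\{u,k\}\in S_0^{1,p}(\Om)$, the pointwise Wolff bound \eqref{Kconst}, and finally the distributional identity with right-hand side $\mu$. The first step is a uniform a priori estimate. Since $\zeta_n * \mu$ is a nonnegative $C^\infty_0(\Om)$ function (extended by $0$), its total mass satisfies $\|\zeta_n*\mu\|_{L^1}\le \mu(\Om)$, and since $\mathrm{supp}(\mu)\Subset\Om$ one has $\mathrm{supp}(\zeta_n*\mu)\subset\Om$ for $n$ large. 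Testing \eqref{equ} against $u_n$ and using a Sobolev--Poincar\'e inequality on the bounded set $\Om$ gives a bound for $\|u_n\|_{S_0^{1,p}(\Om)}$ in terms of $\mu(\Om)$ and $\mathrm{diam}(\Om)$, at least when $p$ is not too small; for the general range one instead works with truncations $\min\{u_n,k\}$ as test functions (the standard Boccardo--Gallou\"et device) to obtain, for each $k>0$, a bound on $\|\min\{u_n,k\}\|_{S_0^{1,p}(\Om)}$ uniform in $n$, together with the weak-type gradient estimates of Proposition \ref{gradient}. This yields, via a diagonal argument, a subsequence $u_{n_k}$ converging a.e.\ and in $L^s(\Om)$ for $s<\frac{M(p-1)}{M-p}$ (and the horizontal gradients converging a.e.) to some function $u$.

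The second step identifies the limit as a $p$-superharmonic function solving \eqref{TRUN}. Each $u_n$ is nonnegative (by the comparison principle, since $\zeta_n*\mu\ge0$) and, being a nonnegative supersolution, is $p$-superharmonic after lower-semicontinuous regularization by Proposition \ref{pro2.1}. By the Trudinger--Wang weak continuity theorem (Theorem \ref{weakcont}) the a.e.\ limit $u$ is $p$-superharmonic and the measures $\mu[u_n]=\zeta_n*\mu$ converge weakly to $\mu[u]$; but $\zeta_n*\mu\rightharpoonup\mu$ by the standard mollification argument, so $\mu[u]=\mu$, i.e.\ $u$ satisfies the distributional equation $-\Delta_{\mathbb G,p}u=\mu$ in $\Om$. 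The boundary condition $u=0$ on $\partial\Om$ is encoded in the requirement $\min\{u,k\}\in S_0^{1,p}(\Om)$: this follows from the uniform $S_0^{1,p}$-bound on $\min\{u_n,k\}$ established in step one, since $S_0^{1,p}(\Om)$ is weakly closed and $\min\{u_n,k\}\to\min\{u,k\}$ a.e.\ and weakly in $S^{1,p}$.

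The third step is the pointwise bound \eqref{Kconst}. Here I would apply the upper Wolff potential estimate of Theorem \ref{potential}: for $u$ nonnegative and $p$-superharmonic in $B_{3r}(x)$ with $-\Delta_{\mathbb G,p}u=\mu$ one has $u(x)\le C_2\inf_{B_r(x)}u + C_3\mathbf{W}_{1,p}^{2r}\mu(x)$. Taking $r$ comparable to $\mathrm{diam}(\Om)$ and using $u=0$ on $\partial\Om$ to control the infimum term — concretely, picking $r\asymp\mathrm{diam}(\Om)$ so that $B_r(x)$ reaches outside $\Om$ where $u$ vanishes (or applying the boundary comparison via $\min\{u,k\}\in S_0^{1,p}$ to push $\inf_{B_r(x)}u$ to $0$) — one obtains $u(x)\le A\,\mathbf{W}_{1,p}^{2\mathrm{diam}(\Om)}\mu(x)$ with $A$ depending only on $M,p$. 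Finally, passing the identity $\int_\Om|Xu_n|^{p-2}Xu_n\cdot X\varphi\,dx=\int_\Om\varphi\,(\zeta_n*\mu)\,dx$ to the limit for $\varphi\in C^\infty_0(\Om)$, using the a.e.\ convergence of $Xu_n$ together with the uniform $L^{p-1}$-type control from Proposition \ref{gradient} (so $|Xu_n|^{p-2}Xu_n$ is equi-integrable and converges in $L^1_{\mathrm{loc}}$) and the weak convergence $\zeta_n*\mu\rightharpoonup\mu$, completes the verification of Definition \ref{DEFEQ}.

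\textbf{Main obstacle.} I expect the delicate point to be the passage to the limit in the nonlinear term $|Xu_n|^{p-2}Xu_n$: a.e.\ convergence of the gradients is not automatic from weak $S^{1,p}$-convergence of the truncations, and one needs the Boccardo--Murat--type argument (testing with $T_\delta(u_n-u)$ or a regularized version, exploiting monotonicity of the $p$-Laplacian) to upgrade to a.e.\ gradient convergence, compounded by the fact that $u$ itself need not lie in $S^{1,p}_{\mathrm{loc}}(\Om)$ so that all such manipulations must be carried out on truncations and then stabilized in $k$. The subelliptic setting adds only notational overhead here, since the monotonicity inequality for $|Xv|^{p-2}Xv$ and the truncation calculus are purely algebraic in the horizontal gradient; the genuinely Carnot-group-specific inputs (Theorems \ref{weakcont} and \ref{potential} of Trudinger--Wang) have already been quoted, so the remaining work is adapting the Euclidean measure-data theory of \cite{KM2}, \cite{DMOP} line by line.
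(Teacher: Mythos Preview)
Your handling of the truncation estimates $\int_\Om|X(\min\{u_n,k\})|^p\le k\,\mu(\Om)$, the extraction of an a.e.\ convergent subsequence via \cite{KM1}, and the passage to the limit in the distributional equation through Theorem \ref{weakcont} matches the paper. The gap is in your derivation of the pointwise Wolff bound \eqref{Kconst}. Theorem \ref{potential} requires the function to be nonnegative and $p$-superharmonic on $B_{3r}(x)$, but $u$ is only known to be $p$-superharmonic in $\Om$. Taking $r\asymp{\rm diam}(\Om)$ ``so that $B_r(x)$ reaches outside $\Om$ where $u$ vanishes'' tacitly assumes the zero extension of $u$ is $p$-superharmonic on a larger set, which you have not shown and which is not automatic for an arbitrary bounded open $\Om$. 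Choosing instead $r\le\tfrac13\,{\rm dist}(x,\partial\Om)$ so that $B_{3r}(x)\subset\Om$ gives no uniform control on $\inf_{B_r(x)}u$ in terms of $R$ and $\mu$ when $x$ is near $\partial\Om$, and the membership $\min\{u,k\}\in S_0^{1,\,p}(\Om)$ does not by itself force this infimum to vanish.

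The paper's device is to compare with a solution on an enlarged ball. With $B=B(a,2R)\supset\Om$ (where $a\in\Om$, $R={\rm diam}(\Om)$), let $v_n\in S_0^{1,\,p}(B)$ solve $-\Delta_{\mathbb G,\,p}\,v_n=\zeta_n*\mu$ in $B$; testing both equations with $\min\{v_n-u_n,0\}\in S_0^{1,\,p}(\Om)\cap S_0^{1,\,p}(B)$ and using monotonicity gives $u_n\le v_n$, hence $u\le v$ for the limits. Now $v$ \emph{is} $p$-superharmonic on all of $B$, and for $x\in\Om$ one has $d(x)={\rm dist}(x,\partial B)\ge R$, so Theorem \ref{potential} applies legitimately with $r=\tfrac13 d(x)$. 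The resulting term $\inf_{B_{d(x)/3}(x)}v$ is bounded by $C\,d(x)^{-M/(p-1)}\|v\|_{L^{p-1}(B)}$, and the same weak-type argument that yields \eqref{NES} gives $\|v\|_{L^{p-1}(B)}\le C R^{p/(p-1)}\mu(\Om)^{1/(p-1)}$; since $d(x)\ge R$ this contribution is absorbed by the tail of ${\rm\bf W}_{1,\,p}^{2R}\mu(x)$. This comparison with the larger-ball solution is the missing idea in your outline.
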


\begin{proof}
Let $\mu_n= \zeta_{n}*\mu$ and let $v_n$ be the unique solution in $S_{0}^{1,\,p}(B)$ of
\begin{equation}\label{eqv}
-\Delta_{\mathbb G,\, p}\, v_n=\mu_n \quad {\rm in}\quad B,
\end{equation}
where $B=B(a, 2R)$ with $R={\rm diam}(\Om)$ and $a\in\Om$ so that $\Om\subset B$.
We now extend $u_n$ by zero outside $\Om$ and set $\varphi=\min\{v_n-u_n,0\}$. Since $\varphi\in S_{0}^{1,\,p}(\Om)
\cap S_{0}^{1,\,p}(B)$ we can use it as a test function in \eqref{equ} and \eqref{eqv} to obtain
$$\int_{B}|Xv_n|^{p-2}Xv_n\cdot X\varphi dx-\int_{\Om}|Xu_n|^{p-2}Xu_n\cdot X\varphi dx =0,$$
or
$$\int_{B\cap\{v_n<u_n\}}|Xv_n|^{p-2}Xv_n\cdot X\varphi dx-\int_{B\cap\{v_n<u_n\}}|Xu_n|^{p-2}Xu_n \cdot X\varphi dx =0.$$

This gives
$$\int_{B\cap\{v_n<u_n\}}(|Xv_n|^{p-2}Xv_n -|Xu_n|^{p-2}Xu_n)\cdot X(v_n-u_n) dx =0.$$

Thus $\varphi=0$ a.e., or equivalently we have
\begin{equation}\label{unvn}
u_n\leq v_n \quad {\rm a.e.}
\end{equation}

Since $\mu_n(\Om)\leq \mu(\Om)$, for each $k>0$ we have the estimate
\begin{eqnarray}\label{GRAD}
\int_{\Om}|X(\min\{u_n,k\})|^p&=&\int_{\Om}|Xu_n|^{p-2}Xu_n\cdot X(\min\{u_n,k\}) \\
&=&\int_{\Om}\min\{u_n, k\}d\mu_n\leq k\mu_n(\Om)\nonumber\\
&\leq& k\mu(\Om).\nonumber
\end{eqnarray}
Consequently, by Sobolev's embedding theorem we obtain
$$\norm{\min\{u_n,k\}}_{L^{\frac{Mp}{M-p}}(\Om)}\leq C(k\mu(\Om))^{\frac{1}{p}}.$$

Hence
\begin{eqnarray*}
|\{u_n>k\}|&\leq& \Big(\frac{1}{k}\norm{\min\{u_n,k\}}_{L^{\frac{Mp}{M-p}}(\Om)}\Big)^{\frac{Mp}{M-p}}\\
&\leq& C \mu(\Om)^{\frac{M}{M-p}} k^{\frac{M(1-p)}{M-p}}.
\end{eqnarray*}

This gives
\begin{equation}\label{NES}
\norm{u_n}_{L^{p-1}(\Om)}\leq |\Om|^{\frac{p}{M(p-1)}}\norm{u_n}_{L^{\frac{M(p-1)}{M-p},\,\infty}}\leq
C |\Om|^{\frac{p}{M(p-1)}}\mu(\Om)^{\frac{1}{p-1}}.
\end{equation}

Now arguing as in \cite{KM1} we can find subsequences $\{u_{n_k}\}$, $\{v_{n_k}\}$ and $p$-superharmonic functions
$u$, $v$ on $\Om$ such that $u_{n_k}\rightarrow u$, $v_{n_k}\rightarrow v$ a.e. Hence from \eqref{GRAD}
and Theorem \ref{weakcont}  we see that $u$ is a distributional solution of \eqref{TRUN}. Similarly, $v$ also solves
\eqref{TRUN} in the distributional sense with $B$ in place of $\Om$, and \eqref{GRAD}, \eqref{NES} hold for $v_n$ with
$B$ in place of $\Om$ as well. In particular, this implies
\begin{equation}\label{NNES}
\norm{v}_{L^{p-1}(B)}\leq C R^{\frac{p}{(p-1)}}\mu(\Om)^{\frac{1}{p-1}}.
\end{equation}

Thus in view of \eqref{unvn} and Theorem \ref{potential} we get
\begin{eqnarray*}
u(x)&\leq& v(x)\leq C\,{\rm\bf W}_{1,\,p}^{\frac 2 3 d(x)}\mu(x) + C\inf_{B_{\frac 1 3d(x)}(x)}v\\
&\leq& C\,{\rm\bf W}_{1,\,p}^{2R}\mu(x) +
C d(x)^{\frac{-M}{p-1}}||v||_{L^{p-1}(B)}\\
&\leq& C\,{\rm\bf W}_{1,\,p}^{2R}\mu(x) +
C R^{\frac{-M}{p-1}}||v||_{L^{p-1}(B)},
\end{eqnarray*}
where $x\in \Om$ and $d(x)={\rm dist}(x, \partial B)$.  Note that  we have used the fact that
$d(x)\geq R$ in the last inequality. Finally, from this and \eqref{NNES} we obtain the pointwise estimate
$$u(x)\leq C\,\mathbf W_{1,\,p}^{2{\rm diam}(\Om)}(x)$$ 
for all $x\in \Om$. Thus $u$ solves \eqref{TRUN} in the potential theoretic sense and the proof is complete.
\end{proof}

\begin{corollary}\label{corexist} For any nonnegative finite measure $\mu$ on $\Om$, there exists a potential theoretic solution to equation
\eqref{eqme}.
\end{corollary}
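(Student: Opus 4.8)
The assertion is essentially an immediate consequence of Theorem~\ref{EXIST}, so the plan is to check that its hypotheses are satisfied and to supply the one ingredient that Theorem~\ref{EXIST} takes for granted, namely the existence and uniqueness in $S_0^{1,\,p}(\Om)$ of the approximating solutions $u_n$ to $-\Delta_{\mathbb G,\,p}\, u_n = \zeta_n * \mu$ with zero boundary data. Since $\mu$ is a nonnegative finite measure on the bounded set $\Om$, each mollification $\mu_n := \zeta_n * \mu$ is a nonnegative function in $C_0^\infty(\mathbb G)$, in particular bounded and compactly supported, so $\varphi \mapsto \int_\Om \varphi\, \mu_n\, dx$ is a bounded linear functional on the reflexive Banach space $S_0^{1,\,p}(\Om)$.

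To produce $u_n$ I would invoke a standard variational (equivalently, monotone operator) argument: $u_n$ is the unique minimizer over $S_0^{1,\,p}(\Om)$ of the functional $v \mapsto \frac{1}{p}\int_\Om |Xv|^p\, dx - \int_\Om v\, \mu_n\, dx$, which is strictly convex, weakly lower semicontinuous, and coercive thanks to the Poincar\'e inequality on the bounded set $\Om$; equivalently, the operator $v \mapsto -\Delta_{\mathbb G,\,p}\, v$ is strictly monotone, coercive and hemicontinuous from $S_0^{1,\,p}(\Om)$ to its dual, so the Dirichlet problem $-\Delta_{\mathbb G,\,p}\, u_n = \mu_n$ in $\Om$, $u_n = 0$ on $\partial\Om$, has a unique solution $u_n \in S_0^{1,\,p}(\Om)$.

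With the $u_n$ in hand I would simply apply Theorem~\ref{EXIST}: it furnishes a subsequence $\{u_{n_k}\}$ converging a.e.\ to a $p$-superharmonic function $u$ on $\Om$ that solves \eqref{TRUN} in the sense of Definition~\ref{DEFEQ}. Since \eqref{TRUN} is exactly equation \eqref{eqme}, this $u$ is by definition a potential theoretic solution of \eqref{eqme}, which is the claim. There is no real obstacle at this stage: all the analytic content --- the uniform bounds \eqref{GRAD}, \eqref{NES}, the compactness argument yielding the a.e.\ limit, the weak continuity of the generated measures (Theorem~\ref{weakcont}), and above all the pointwise bound \eqref{Kconst} extracted from the Wolff potential estimate of Theorem~\ref{potential} --- is already packaged inside Theorem~\ref{EXIST}, and this corollary merely records the resulting existence statement in the form needed later, for instance in the converse direction of Theorem~\ref{Dmain1}.
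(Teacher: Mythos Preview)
Your proposal is correct and matches the paper's approach exactly: the paper provides no separate proof for this corollary, treating it as an immediate consequence of Theorem~\ref{EXIST}, which is precisely what you do. Your additional justification of the existence and uniqueness of the approximants $u_n\in S_0^{1,\,p}(\Om)$ via the standard variational/monotone-operator argument is a detail the paper takes for granted in the statement of Theorem~\ref{EXIST}, and filling it in does no harm.
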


We now construct a solution to a nonlinear equation with a power source term under a certain iterated Wolff's potential condition.
This condition  turns out to be  sharp as we will see later.

\begin{theorem}\label{SU2}
Let $\om$ be a nonnegative finite measure on $\Om$. Let $p>1 $ and $q>p-1$.
Suppose that $R={\rm diam}(\Om)$, and
\begin{equation}\label{CONDP}
{\rm\bf W}_{1,\,p}^{2R}\,({\rm\bf W}_{1,\,p}^{2R}\om)^{q}\leq C\,{\rm\bf W}_{1,\,p}^{2R}\om
\quad  {\rm a.e.,}
\end{equation}
where
\begin{equation}\label{condC}
C \leq \Big(\frac{q-p+1}{q A\max\{1,2^{p'-2}\}}\Big)^{q(p'-1)}\Big(\frac{p-1}{q-p+1}\Big),
\end{equation}
and $A$ is the constant in Definition \ref{DEFEQ}.
Then there is a solution $u\in L^{q}(\Om)$ to the equation
\begin{eqnarray}
\label{suff2}
\left\{\begin{array}{rcl}
-\Delta_{\mathbb G,\,p}\, u&=&u^{q}+\om \quad{\rm in~} \Om,\\
u&=&0 \quad {\rm on~} \partial\Om.
\end{array}
\right.
\end{eqnarray}
Moreover, for every $x$ in $\Om$,
$$ u(x)\leq \kappa \, {\rm\bf W}_{1,\,p}^{2R}\om(x),$$
where the constant $\kappa$ depends  only on $p, q, M$.
\end{theorem}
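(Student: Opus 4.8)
The plan is to construct $u$ as the limit of a monotone iteration modelled on $-\Delta_{\mathbb G,\,p}\,u_{j+1}=u_j^q+\om$. Start with $u_0\equiv0$; assuming $u_j\ge0$ is defined and $u_j^q\,dx$ is a finite measure on $\Om$, let $u_{j+1}$ be the potential theoretic solution of
$$-\Delta_{\mathbb G,\,p}\,u_{j+1}=u_j^q+\om\quad {\rm in~}\Om,\qquad u_{j+1}=0\quad {\rm on~}\partial\Om$$
furnished by Theorem \ref{EXIST} (cf.\ Corollary \ref{corexist}). By the very definition of a potential theoretic solution, each iterate satisfies the pointwise bound $u_{j+1}(x)\le A\,{\rm\bf W}_{1,\,p}^{2R}(u_j^q\,dx+\om)(x)$ for all $x\in\Om$, where $A$ is the universal constant of Definition \ref{DEFEQ}.

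The first and crucial step is a self-improving a priori estimate. Writing ${\rm\bf W}={\rm\bf W}_{1,\,p}^{2R}\om$ and $\kappa=\frac{q\,A\max\{1,2^{p'-2}\}}{q-p+1}$, I claim that $0\le u_j\le\kappa\,{\rm\bf W}$ a.e.\ for every $j$. This is immediate for $j=0$ and $j=1$ (since $u_1\le A\,{\rm\bf W}\le\kappa\,{\rm\bf W}$); and if $u_j\le\kappa\,{\rm\bf W}$, then combining the pointwise bound with the quasi-subadditivity ${\rm\bf W}_{1,\,p}^{2R}(\mu_1+\mu_2)\le\max\{1,2^{p'-2}\}({\rm\bf W}_{1,\,p}^{2R}\mu_1+{\rm\bf W}_{1,\,p}^{2R}\mu_2)$ (a consequence of $(a+b)^{1/(p-1)}\le\max\{1,2^{p'-2}\}(a^{1/(p-1)}+b^{1/(p-1)})$), the homogeneity ${\rm\bf W}_{1,\,p}^{2R}(c\,\mu)=c^{p'-1}\,{\rm\bf W}_{1,\,p}^{2R}\mu$, and the hypothesis \eqref{CONDP}, one gets
$$u_{j+1}\le A\max\{1,2^{p'-2}\}\big(\kappa^{q(p'-1)}\,{\rm\bf W}_{1,\,p}^{2R}({\rm\bf W}^q)+{\rm\bf W}\big)\le A\max\{1,2^{p'-2}\}\big(C\,\kappa^{q(p'-1)}+1\big)\,{\rm\bf W}.$$
Since $q>p-1$, the exponent $\theta:=q(p'-1)=q/(p-1)$ exceeds $1$, and an elementary one‑variable optimization (write $\kappa=a/(1-\theta^{-1})$ with $a=A\max\{1,2^{p'-2}\}$, so that the inequality $a(C\kappa^{\theta}+1)\le\kappa$ reduces to $a\,C\,\kappa^{\theta-1}\le\theta^{-1}$, which is precisely \eqref{condC}) shows that the stated $\kappa$ together with the smallness \eqref{condC} of $C$ is exactly what makes the induction close, giving $u_{j+1}\le\kappa\,{\rm\bf W}$. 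The same bound shows $u_j\in L^q(\Om)$, hence $u_j^q\,dx+\om$ is finite and the iteration is well defined, once we know $\int_\Om{\rm\bf W}^q\,dx<\infty$; this last fact itself follows from \eqref{CONDP}, since at a point $x_0$ with ${\rm\bf W}(x_0)<\infty$ and ${\rm\bf W}_{1,\,p}^{2R}({\rm\bf W}^q)(x_0)<\infty$ the convergence of the defining integral forces ${\rm\bf W}^q(B_t(x_0))<\infty$ for every $t<2R$, and any such ball with $t>R={\rm diam}(\Om)$ contains $\Om$.

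Next I would establish the monotonicity $0=u_0\le u_1\le u_2\le\cdots$ and pass to the limit. Monotonicity is proved by induction: if $u_{j-1}\le u_j$, the source measures are ordered, and the weak comparison principle for $\Delta_{\mathbb G,\,p}$, applied to the $S_0^{1,\,p}(\Om)$ approximations $u_n^{(j)},u_n^{(j+1)}$ of $u_j,u_{j+1}$ from Theorem \ref{EXIST} (with test function $(u_n^{(j)}-u_n^{(j+1)})^{+}$, exactly as in the derivation of \eqref{unvn}), gives $u_n^{(j)}\le u_n^{(j+1)}$ for every $n$; passing to the a.e.\ limit along one subsequence good for all $j$ (a standard diagonal extraction) yields $u_j\le u_{j+1}$. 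Hence $u_j\uparrow u$ a.e.\ for some $u$ with $0\le u\le\kappa\,{\rm\bf W}\in L^q(\Om)$, and $u$ is $p$-superharmonic (an increasing limit of $p$-superharmonic functions, finite a.e.). By dominated convergence $u_j^q\to u^q$ in $L^1(\Om)$, so $u_{j-1}^q\,dx+\om\to u^q\,dx+\om$ in total variation; comparing this with the weak convergence $\mu[u_j]\to\mu[u]$ from the weak continuity theorem (Theorem \ref{weakcont}) identifies $\mu[u]=u^q\,dx+\om$, i.e.\ $-\Delta_{\mathbb G,\,p}\,u=u^q+\om$ in $\Om$. Letting $j\to\infty$ in the uniform energy bound $\int_\Om|X\min\{u_j,k\}|^p\le k\,(u_{j-1}^q\,dx+\om)(\Om)$ (as in \eqref{GRAD}) gives $\min\{u,k\}\in S_0^{1,\,p}(\Om)$ for every $k$, and letting $j\to\infty$ in $u_{j+1}\le A\,{\rm\bf W}_{1,\,p}^{2R}(u_j^q\,dx+\om)$ (monotone convergence inside the Wolff potential) yields the bound \eqref{Kconst}. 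Thus $u$ is a potential theoretic solution of \eqref{suff2}, and the desired pointwise estimate $u\le\kappa\,{\rm\bf W}_{1,\,p}^{2R}\om$ is the $j\to\infty$ limit of Step~1.

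I expect the main obstacle to be the first step: making the self-improving pointwise estimate close with a quantitatively sharp constant. That is the only place where the iterated Wolff inequality \eqref{CONDP} genuinely enters, and the precise admissible range \eqref{condC} of $C$ is dictated by requiring that the affine‑in‑$\kappa^{q(p'-1)}$ bound map the ordered set $\{v:0\le v\le\kappa\,{\rm\bf W}\}$ into itself. Once that is arranged, the remaining points—finiteness of $\int_\Om{\rm\bf W}^q$, monotonicity via comparison, and the passage to the limit through Theorems \ref{weakcont} and \ref{EXIST}—are routine given the apparatus assembled in Sections \ref{sec3}--\ref{sec4}.
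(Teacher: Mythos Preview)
Your proposal is correct and follows essentially the same approach as the paper: both construct $u$ as the increasing limit of the iteration $-\Delta_{\mathbb G,\,p}\,u_{j+1}=u_j^q+\om$ built via Theorem \ref{EXIST}, establish the uniform bound $u_j\le\kappa\,{\rm\bf W}_{1,\,p}^{2R}\om$ by the same induction using \eqref{CONDP} and the quasi-subadditivity of the Wolff potential (the paper tracks running constants $c_k$ and shows $c_k\le\kappa$, while you verify directly that $\kappa$ closes the recursion---these are equivalent), prove monotonicity by comparison at the mollified level as in \eqref{unvn}, and pass to the limit via Theorem \ref{weakcont}. Your write-up is in fact somewhat more explicit than the paper's on a couple of points (the finiteness of $\int_\Om({\rm\bf W}_{1,\,p}^{2R}\om)^q\,dx$ and the verification that the limit $u$ satisfies all clauses of Definition \ref{DEFEQ}).
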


\begin{proof}
Let  $u^{(1)}_{n}$ solve the equation
\begin{eqnarray*}
\left\{\begin{array}{rcl}
-\Delta_{\mathbb G,\,p}\, u_n^{(1)}&=&\zeta_n*\om \quad{\rm in~}\Om,\\
u_n^{(1)}&=&0 \quad {\rm on~} \partial\Om.
\end{array}
\right.
\end{eqnarray*}
By Theorem \ref{EXIST}, there exists a function $u^{(1)}$ that satisfies
\begin{eqnarray}\label{U1}
\left\{\begin{array}{rcl}
-\Delta_{\mathbb G,\,p}\, u^{(1)}&=&\om \quad{\rm in~}\Om,\\
u^{(1)}&=&0 \quad {\rm on~} \partial\Om
\end{array}
\right.
\end{eqnarray}
in the sense of Definition \ref{DEFEQ}, and for a subsequence of $\{u^{(1)}_n\}$, still denoted by $\{u^{(1)}_n\}$, we have
\begin{equation}\label{LIM1}
u^{(1)}=\lim_{n\rightarrow\infty} u^{(1)}_{n} \quad {\rm a.e.}
\end{equation}

Similarly, let $u^{(2)}_n$ be a solution to the equation
\begin{eqnarray*}
\left\{\begin{array}{rcl}
-\Delta_{\mathbb G,\,p}\, u^{(2)}_n&=&\zeta_n*[u^{(1)}]^{q}+\zeta_n*\om \quad{\rm in~}\Om,\\
u^{(2)}_n&=&0 \quad {\rm on~} \partial\Om.
\end{array}
\right.
\end{eqnarray*}
Then by Theorem \ref{EXIST}, there exists a function $u^{(2)}$ that satisfies
\begin{eqnarray*}
\left\{\begin{array}{c}
-\Delta_{\mathbb G,\,p}\, u^{(2)}=[u^{(1)}]^{q} + \om \quad{\rm in~}\Om,\\
u^{(2)}=0 \quad {\rm on~} \partial\Om
\end{array}
\right.
\end{eqnarray*}
in the sense of Definition \ref{DEFEQ}, and for a subsequence of $\{u^{(2)}_n\}$, still denoted by $\{u^{(2)}_n\}$, we have
\begin{equation}\label{LIM2}
u^{(2)}=\lim_{n\rightarrow\infty} u^{(2)}_{n} \quad {\rm a.e.}
\end{equation}

As $u^{(1)}_n\leq u^{(2)}_n$ we see from \eqref{LIM1} and \eqref{LIM2} that
$u^{(1)}\leq u^{(2)}$ a.e. and hence everywhere since they are $p$-superharmonic.
Thus by induction we can find an
increasing sequence $\{u^{(k)}\}$ such that $u^{(1)}$ satisfies \eqref{U1} and for $k\geq 2$,
\begin{eqnarray}\label{EQUk}
\left\{\begin{array}{rcl}
-\Delta_{\mathbb G,\,p}\, u^{(k)}&=&[u^{(k-1)}]^{q} + \om \quad{\rm in~}\Om,\\
u^{(k)}&=&0 \quad {\rm on~} \partial\Om
\end{array}
\right.
\end{eqnarray}
in the sense of Definition \ref{DEFEQ}. Note that we have
$$u^{(1)}\leq A\, {\rm W}_{1,\,p}^{2R}\om, \qquad
u^{(k)}\leq A\, {\rm W}_{1,\,p}^{2R}([u^{(k-1)}]^{q}+\om)$$
for all $k\geq 2$. In view of  these estimates and the condition \eqref{CONDP} we get
\begin{eqnarray*}
u^{(2)}&\leq& A \max\{1,2^{p'-2}\}\Big[{\rm\bf W}^{2R}_{1,\,p}[u^{(1)}]^{q}
+{\rm\bf W}^{2R}_{1,\,p}\om\Big]\\
&\leq& A \max\{1,2^{p'-2}\}(c_{1}^{q(p'-1)}C+1){\rm\bf W}^{2R}_{1,\,p}\om\\
&=& c_{2}{\rm\bf W}^{2R}_{1,\,p}\om,
\end{eqnarray*}
where $c_1=A$ and $c_{2}=A \max\{1,2^{p'-2}\}(c_{1}^{q(p'-1)}C+1)$. By induction
we can find a sequence $\{c_{k}\}_{k\geq 1}$ of positive numbers
such that $$u^{(k)}\leq c_{k}{\rm\bf W}^{2R}_{1,\,p}\om,$$  with
$$c_{k}=A\max\{1,2^{p'-2}\}(c_{k-1}^{q(p'-1)}C+1)$$ for all $k\geq
2$. It is then easy to see that $c_{k}\leq \frac{A\max\{1,\,
2^{p'-2}\}q}{q-p+1}$ for all $k\geq 1$ as long as $C$ satisfies (\ref{condC}). Thus
$$u^{(k)}\leq \frac{A\max\{1,2^{p'-2}\}q}{q-p+1}{\rm\bf W}^{2R}_{1,\,p}\om.$$

Therefore, $\{u^{(k)}\}$ converges pointwise increasingly to a
nonnegative function $u$ for which
\begin{equation*}
u\leq \kappa\,{\rm W}_{1,\,p}^{2R}\om.
\end{equation*}

Finally, in view of \eqref{EQUk} and Theorem \ref{weakcont} we see that $u$ solves \eqref{suff2} in the
sense of Definition \ref{DEFEQ}. This completes the proof of the theorem.
\end{proof}

In the general context of homogeneous spaces, it was proved in \cite{SW} and \cite{Chr} 
that for $\lambda=8$, and for any (large negative) integer $m$, there are points $\{x_{j}^{k}\}\subset\mathbb G$
and a family of sets $\mathcal{D}_{m}=\{E^{k}_{j}\}$,  $k=m, m+1,\dots$ and $j=1,2,\dots$ such that

\begin{itemize}
\item[(i)] $B_{ \lambda^{k}}(x_{j}^{k})\subset E_{j}^{k}\subset B_{ \lambda^{k+1}}(x_{j}^{k}),$

\item[(ii)] For each fixed $k=m, m+1,\dots$, the sets $E^{k}_{j}$ are pairwise disjoint in $j$, and
$$\mathbb{G}=\bigcup_{j\geq  1}E_{j}^{k},$$

\item[(iii)] If $k<l$ {\rm ~then ~either~} $E^{k}_{j}\cap E^{l}_{i}=\emptyset$ {\rm ~or~} $E^{k}_{j}\subset E^{l}_{i}$.
\end{itemize}

We shall say that the family $\mathcal{D}=\bigcup_{m=-\infty}^{\infty}\mathcal{D}_{m}$ is a dyadic cube decomposition of
$\mathbb{G}$, and call sets in $\mathcal{D}$ dyadic cubes and denote them by $Q$. Note that the cubes in
$\mathcal{D}_{m_{1}}$ may have no relation to those in $\mathcal{D}_{m_{2}}$ if $m_{1}$ and $m_{2}$ are different. If
$Q=E^{k}_{j}\in \mathcal{D}_{m}$ for some $m$, we say $Q$ is centered at $x^{k}_{j}$ and define the side length of
$Q$ to be $\ell(Q)=\lambda^{k}$. We also denote by $Q^*$ the containing ball $B_{\lambda^{k+1}}(x_{j}^{k})$ of $Q$
and by $Q^{**}$ the ball $B_{ 2\lambda^{k+2}}(x_{j}^{k})$.

\begin{remark}\label{overlap}
Note  that if $Q^{**}=B_{2\lambda^{k+2}}(x^{k}_{j_{1}})$ and if
$\{E^{k}_{j_i}\}$, $i=1,\dots, d$, are the dyadic cubes that intersect $Q^{**}$ then obviously $Q^{**}\subset \cup_{i=1}^{m}E_{j_{i}}^{k}$.
Moreover, since each $E^{k}_{j_{i}}\subset B_{4\lambda^{k+2}}(x_{j_{1}}^{k})$ we obtain
$$c\, d \lambda^{k M}\leq \Big|\bigcup_{i=1}^{d} E^{k}_{j_i}\Big|\leq \left|B_{4\lambda^{k+2}}(x_{j_{1}}^{k})\right| \leq C\, \lambda^{k M},$$
which gives
$$d\leq C=C(\mathbb G).$$
This implies that the ball $Q^{**}=B_{2\lambda^{k+2}}(x^{k}_{j_{1}})$ is contained in the union of at most
$d$ dyadic cubes of side length $\lambda^{k}$ for some constant $d=d(\mathbb G)$.
\end{remark}

For an integer $m$, let  $\Lambda=\{\lambda_Q\}_{Q\in{\mathcal D}_{m}}$,
$\lambda_Q\geq 0$,  and let $\sigma$ be a positive locally finite Borel
measure on $\mathbb{G}$ such that $\lambda_Q = 0$ whenever $\sigma(Q)=0$. We will
 follow the  convention that $0 \cdot \infty = 0$. For $1<s<+\infty$, we define  \begin{equation*}\begin{split}  & A^{m}_1(\Lambda)=
\int_{\mathbb{G}}\Big[ \sum_{Q\in{\mathcal D}_{m}}
\frac{\lambda_Q}{\sigma(Q)}\chi_Q(x) \Big]^sd\sigma(x),\\   &
A_2^{m}(\Lambda)=\sum_{Q\in{\mathcal D}_{m}} \lambda_Q \Big[
\frac{1}{\sigma(Q)}\sum_{Q'\in {\mathcal D}_{m},\, Q'\subset Q}  \lambda_{Q'}\Big]^{s-1},\\
&A_3^{m}(\Lambda)=\int_{\mathbb{G}}\sup_{x\in Q\subset \mathcal{D}_{m}}\Big[
\frac1{\sigma(Q)}\sum_{Q'\in {\mathcal D}_{m},\, Q'\subset Q}\lambda_{Q'}\Big]^s  d\sigma(x).
\end{split}\end{equation*}

The  proof of  following proposition will be omitted as it is similar to the one  given in \cite{COV}
in the case $\mathbb{G}=\RR^N$ and $\mathcal{D}_{m}$ is the set of all standard dyadic cubes in $\RR^N$.

\begin{proposition}\label{proposition4.1}  Let
$\sigma$ be a positive locally finite Borel  measure on $\mathbb{G}$.  Let  $1<s<
+\infty$.  Then there exist constants $C_i >0, \, i = 1, 2, 3$, which depend
only  on $s$,  such that 
$$  A_1^{m} (\Lambda) \leq C_1 \, A_2^{m}(\Lambda) \leq C_2 \,
A_3^{m}(\Lambda)\leq C_3 \, A^{m}_1(\Lambda)$$
for all $\Lambda=\{\lambda_Q\}_{Q\in{\mathcal  D}_{m}}$ with $\lambda_Q\geq 0$, and $m\in\ZZ$.
\end{proposition}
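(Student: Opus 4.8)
The plan is to prove the cyclic chain of inequalities $A_1^m(\Lambda)\leq C_1 A_2^m(\Lambda)\leq C_2 A_3^m(\Lambda)\leq C_3 A_1^m(\Lambda)$ by treating each of the three inequalities separately, following the dyadic scheme of \cite{COV}. The only structural fact about $\mathcal D_m$ that is used is the nested tree property (iii) together with the partition property (ii): for a fixed generation $m$, the cubes $Q\in\mathcal D_m$ form a tree under inclusion, and this is all the argument needs — it makes no reference to the failure of Besicovitch covering or to the geometry of $\mathbb G$ beyond what is encoded in $\sigma$ being a positive locally finite Borel measure. I would first fix notation: write $S_Q(\Lambda)=\sum_{Q'\in\mathcal D_m,\,Q'\subset Q}\lambda_{Q'}$ for the ``sum over descendants'' and note the telescoping identity $S_Q(\Lambda)=\lambda_Q+\sum_{Q'\ \text{child of}\ Q} S_{Q'}(\Lambda)$, which drives all three estimates.

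For $A_2^m\leq C A_3^m$: since for each $x\in Q$ one has $\frac{1}{\sigma(Q)}S_Q(\Lambda)\leq \sup_{x\in Q'\in\mathcal D_m}\frac{1}{\sigma(Q')}S_{Q'}(\Lambda)=:f(x)$, I would write $A_2^m(\Lambda)=\sum_Q\lambda_Q\big[\frac{1}{\sigma(Q)}S_Q(\Lambda)\big]^{s-1}$ and bound $\big[\frac{1}{\sigma(Q)}S_Q\big]^{s-1}\leq \frac{1}{\sigma(Q)}\int_Q f^{s-1}\,d\sigma$ is not quite right — rather, I would use that on $Q$, $f(x)\geq \frac{1}{\sigma(Q)}S_Q(\Lambda)$ pointwise, sum $\sum_Q\lambda_Q\cdot\frac{1}{\sigma(Q)}\int_Q f(x)^{s-1}d\sigma(x)$, interchange sum and integral, and recognize $\sum_{Q\ni x}\frac{\lambda_Q}{\sigma(Q)}$ against $f^{s-1}$; then Hölder in the form $\big(\sum\frac{\lambda_Q}{\sigma(Q)}\chi_Q\big)\cdot f^{s-1}$ integrated against $d\sigma$ is controlled by $A_1^m(\Lambda)^{1/s}\,A_3^m(\Lambda)^{(s-1)/s}$, and a second application closes to $A_2^m\lesssim A_3^m$. (The bookkeeping here mirrors \cite{COV} and I would not reproduce it in full.)

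For $A_3^m\leq C A_1^m$: this is the maximal-function step — the operator $\Lambda\mapsto f$ is a dyadic maximal operator on the tree $\mathcal D_m$ with underlying measure $\sigma$, and $A_3^m(\Lambda)=\|f\|_{L^s(d\sigma)}^s$ while $A_1^m(\Lambda)=\|\sum_Q\frac{\lambda_Q}{\sigma(Q)}\chi_Q\|_{L^s(d\sigma)}^s$. One uses the Fefferman–Stein / Carleson-type boundedness of the dyadic maximal function on $L^s(d\sigma)$, $s>1$, which holds on any tree and gives $\|f\|_{L^s(d\sigma)}\leq C(s)\,\|\sum_Q\frac{\lambda_Q}{\sigma(Q)}\chi_Q\|_{L^s(d\sigma)}$; this is precisely where $s>1$ is essential and where the constant depends only on $s$. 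For $A_1^m\leq C A_2^m$: expand $\|\sum_Q\frac{\lambda_Q}{\sigma(Q)}\chi_Q\|_{L^s}^s$ using the layer-cake / linearization trick of \cite{COV} — write the $s$-th power as $s$ nested sums over chains $Q_1\supset Q_2\supset\cdots$, use the tree structure to organize the sum by the smallest cube, and bound by $\sum_Q\lambda_Q\big[\frac{1}{\sigma(Q)}S_Q(\Lambda)\big]^{s-1}=A_2^m(\Lambda)$ via an elementary inequality $a^s\leq s\int_0^a t^{s-1}dt$ applied discretely along the chain.

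The main obstacle is the $A_3^m\leq C A_1^m$ inequality: one must be careful that the ``sup over $Q\ni x$'' in $A_3^m$ is a genuine dyadic maximal function with respect to the tree $\mathcal D_m$ and that its $L^s(d\sigma)$-boundedness holds with constant depending only on $s$ and not on $\sigma$ or the tree — this is true because the weak-type $(1,1)$ bound for dyadic maximal operators is automatic from the stopping-time/disjointification of maximal cubes (which uses only the tree structure), and Marcinkiewicz interpolation then gives the strong $(s,s)$ bound for $s>1$ with a dimension-free constant. Since the proof is stated to be ``similar to the one given in \cite{COV}'', I would keep the presentation brief, emphasizing only that every step uses solely the tree structure (ii)–(iii) of $\mathcal D_m$ — which is available on any homogeneous space, in particular on $\mathbb G$ — and refer to \cite{COV} for the detailed computations.
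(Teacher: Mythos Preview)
Your proposal is correct and matches the paper's own treatment: the paper omits the proof entirely, stating only that it is ``similar to the one given in \cite{COV}'' for the standard Euclidean dyadic grid, and your sketch follows precisely the \cite{COV} scheme while correctly noting that only the tree properties (ii)--(iii) of $\mathcal D_m$ are needed. There is nothing further to add.
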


We next consider the following quantities. For each integer $m$, a dyadic cube $P\in \mathcal{D}_{m}$, and
a nonnegative Borel measure $\mu$ on $\mathbb{G}$ we define
\begin{eqnarray*}
&(a)& B^{m}_{1}(P,\mu)=\sum_{Q\subset P} \Big[\frac{\mu(Q)}{\m{Q}^{1-
\frac{\alpha p}{Q}}}\Big]^{\frac{q}{p-1}}\m{Q},\\
&(b)& B^{m}_{2}(P,\mu)=\int_{P}\Big[\sum_{Q\subset P} \frac{\mu(Q)^{\frac{1}{p-1}}}
{\m{Q}^{(1-\frac{\alpha p}{Q})\frac{1}{p-1}}}\chi_{Q}(x)\Big]^{q}dx,\\
&(c)& B^{m}_{3}(P,\mu)=\int_{P}\Big[\sum_{Q\subset P}\frac{\mu(Q)}{\m{Q}^{1-\frac
{\alpha p}{Q}}}\chi_{Q}(x)\Big]^{\frac{q}{p-1}}dx.
\end{eqnarray*}
Here $\alpha>0$, $p>1$, $q>p-1$, and the sum is taken over all dyadic cubes $Q\in \mathcal{D}_{m}$ such that
$Q\subset P$.

\begin{proposition}\label{propB} There exist constants $C_{i}>0$, $i=1, 2, 3$, independent of $m$, $P$, and $\mu$ such that
\begin{equation}\label{chain}
B_{1}^{m}(P, \mu)\leq C_{1}B_{2}^{m}(P, \mu)\leq C_{2}B_{3}^{m}(P, \mu)\leq C_{3}B_{1}^{m}(P, \mu).
\end{equation}
\end{proposition}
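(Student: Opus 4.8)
The plan is to deduce the proposition from Proposition~\ref{proposition4.1} by a suitable choice of weights, supplemented by one elementary ``telescoping'' estimate that encodes the hypothesis $\alpha p>0$. Fix $m$ and $P\in\mathcal D_m$; all sums below run over cubes $Q\in\mathcal D_m$ with $Q\subset P$. Put $s=\frac{q}{p-1}>1$ (possible since $q>p-1$) and
$$ b_Q=\frac{\mu(Q)}{|Q|^{1-\frac{\alpha p}{M}}},\qquad c_Q=b_Q^{\,\frac1{p-1}}, $$
so that $B_1^m(P,\mu)=\sum_{Q\subset P}c_Q^{\,q}|Q|$, $B_2^m(P,\mu)=\int_P\big(\sum_{Q\subset P}c_Q\chi_Q\big)^q\,dx$, and $B_3^m(P,\mu)=\int_P\big(\sum_{Q\subset P}b_Q\chi_Q\big)^{s}\,dx$. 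The first step is the \emph{telescoping lemma}:
$$ \mu(Q)|Q|^{\frac{\alpha p}{M}}\ \le\ \sum_{Q'\subset Q}\mu(Q')|Q'|^{\frac{\alpha p}{M}}\ \le\ C_0\,\mu(Q)|Q|^{\frac{\alpha p}{M}}\qquad(Q\subset P), $$
with $C_0=C_0(\mathbb G,\alpha,p)$. The left bound is the term $Q'=Q$; for the right bound one groups the $Q'\subset Q$ by generations and uses that, by properties (ii)--(iii) of the dyadic decomposition, the $k$-th generation descendants of $Q$ partition $Q$ (so their $\mu$-masses add up to $\mu(Q)$) and have volume $\approx\lambda^{-Mk}|Q|$; summing the geometric series in $k$, which converges precisely because $\alpha p>0$, gives the claim.

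Next I would apply Proposition~\ref{proposition4.1} with $\sigma=dx$ (so $\sigma(Q)=|Q|$), exponent $s=\frac{q}{p-1}$, and $\lambda_Q=\mu(Q)|Q|^{\alpha p/M}$ for $Q\subset P$, $\lambda_Q=0$ otherwise. Then $A_1^m(\Lambda)=B_3^m(P,\mu)$ directly; the telescoping lemma gives $\sum_{Q'\subset Q}\lambda_{Q'}\approx\lambda_Q$, whence $A_2^m(\Lambda)\approx\sum_{Q\subset P}\lambda_Q\big(\lambda_Q/|Q|\big)^{s-1}=\sum_{Q\subset P}c_Q^{\,q}|Q|=B_1^m(P,\mu)$; and, again by the lemma, $\big(\frac1{|Q|}\sum_{Q'\subset Q}\lambda_{Q'}\big)^s\lesssim b_Q^{\,s}=c_Q^{\,q}$, so the maximal quantity obeys $A_3^m(\Lambda)\lesssim\int_P\big(\sup_{x\in Q\subset P}c_Q\big)^q\,dx\le B_2^m(P,\mu)$, the last inequality because $\sup\le\sum$. (Here one must check that localizing to $P$ costs nothing: cubes $Q\supsetneq P$ and the part of the integral over $\mathbb G\setminus P$ contribute, through the unique ancestor chain $P=R_0\subset R_1\subset\cdots$ of $P$ in $\mathcal D_m$, at most $C\sum_{n\ge 0}b_P^{\,s}|P|^s|R_n|^{1-s}\lesssim b_P^{\,s}|P|=c_P^{\,q}|P|\le\int_P c_P^{\,q}\le B_2^m$, the geometric series converging since $s>1$.) Feeding these three identifications into the chain $A_1^m\lesssim A_2^m\lesssim A_3^m\lesssim A_1^m$ of Proposition~\ref{proposition4.1} yields both $B_1^m\approx B_3^m$ and $B_1^m\lesssim B_2^m$ (and $B_3^m\lesssim B_2^m$).

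It remains to bound $B_2^m$ from above, which closes the cycle. If $1<p\le 2$ this is pointwise: $t\mapsto t^{p-1}$ is subadditive, so $\big(\sum_{Q\subset P}c_Q\chi_Q\big)^{p-1}\le\sum_{Q\subset P}c_Q^{\,p-1}\chi_Q=\sum_{Q\subset P}b_Q\chi_Q$, and raising to the power $s$ and integrating over $P$ gives $B_2^m\le B_3^m$. If $p\ge 2$, then $q>p-1\ge 1$, and I would apply Proposition~\ref{proposition4.1} a second time — now with exponent $q>1$ and weights $\widetilde\lambda_Q=c_Q|Q|$ for $Q\subset P$ — for which $A_1^m=B_2^m$. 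A second telescoping estimate, $\sum_{Q'\subset Q}c_{Q'}|Q'|=\sum_{Q'\subset Q}\mu(Q')^{\frac1{p-1}}|Q'|^{\gamma}\lesssim c_Q|Q|$ with $\gamma=\frac{p-2+\alpha p/M}{p-1}$, proved as before but now estimating each generation by Hölder's inequality (concavity of $t\mapsto t^{1/(p-1)}$ when $p\ge 2$) and summing the geometric series, which converges because $\gamma>\frac{p-2}{p-1}$, i.e. again because $\alpha p>0$, yields $A_2^m\approx\sum_{Q\subset P}c_Q^{\,q}|Q|=B_1^m$, so that $B_2^m\lesssim B_1^m\approx B_3^m$.

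Combining the last two paragraphs gives $B_1^m\lesssim B_2^m\lesssim B_3^m\lesssim B_1^m$, which is the assertion, the constants depending only on $\mathbb G,p,q,\alpha$ and hence being independent of $m,P,\mu$. The step I expect to be the main obstacle is the treatment of $B_2^m$: unlike $B_1^m$ and $B_3^m$, it does not fit the framework of Proposition~\ref{proposition4.1} with the natural exponent $\frac{q}{p-1}$, so the regime $p\ge 2$ forces a genuinely separate application of that proposition with exponent $q$ together with the second telescoping estimate, which is more delicate because $\mu(Q')^{1/(p-1)}$ is no longer additive in $Q'$ and one loses a power of the cardinality of each generation that must be reabsorbed using $\alpha p>0$. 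A secondary but necessary technical point, already needed in the previous step, is checking that passing from $\mathbb G$ to a single cube $P$ does not break the equivalences — that is, that the maximal quantity $A_3^m$ does not leak mass outside $P$ — which is exactly what the ancestor-chain estimate above handles.
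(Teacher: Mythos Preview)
Your proof is correct and follows essentially the same route as the paper: apply Proposition~\ref{proposition4.1} with weights $\lambda_Q=\mu(Q)|Q|^{\alpha p/M}$ and exponent $s=q/(p-1)$, use the geometric ``telescoping'' estimate $\sum_{Q'\subset Q}\lambda_{Q'}\approx\lambda_Q$ to identify $A_2^m\approx B_1^m$ and $A_3^m\lesssim B_2^m$, and then treat $B_2^m$ separately via the pointwise bound when $p\le2$ and a second application of Proposition~\ref{proposition4.1} (exponent $q$, weights $c_Q|Q|$) together with a H\"older/concavity estimate when $p>2$.

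The one place where the paper is slightly cleaner is the localization to $P$: instead of taking $\sigma=dx$ on all of $\mathbb G$ and then controlling the leakage through the ancestor chain $P=R_0\subset R_1\subset\cdots$ as you do, the paper takes $d\sigma=\chi_P\,dx$. With that choice every ancestor $R_n\supsetneq P$ has $\sigma(R_n)=|P|$, so the ratio $\tfrac1{\sigma(R_n)}\sum_{Q'\subset R_n}\lambda_{Q'}$ equals the value at $R_0=P$ and the supremum in $A_3^m$ is automatically realized inside $P$; no separate estimate is needed. Your ancestor-chain computation is correct, but this device makes it unnecessary.
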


\begin{proof} Let $\Lambda=\{\lambda_{Q}\}_{Q\in\mathcal{D}_{m}}$ where $\lambda_{Q}=\mu(Q)|Q|^{\frac{\alpha p}{M}}$
if $Q\subset P$ and $\lambda_{Q}=0$ otherwise. Applying Proposition \ref{proposition4.1} with $d\sigma=\chi_{P}dx$
and $s=\frac{q}{p-1}>1$ we obtain
\begin{eqnarray*}
B^{m}_{3}(P,\mu)&\leq& C \sum_{Q\subset P}\lambda_{Q}\Big[\frac{1}{|Q|}\sum_{Q'\subset Q}\lambda_{Q'}
\Big]^{\frac{q}{p-1}-1}\\
&=&C \sum_{Q\subset P}\mu(Q)|Q|^{\frac{\alpha p}{M}}\Big[\frac{1}{|Q|}\sum_{Q'\subset Q}\mu(Q')|Q'|^{\frac{\alpha p}{M}}
\Big]^{\frac{q}{p-1}-1}\\
&\leq& C \sum_{Q\subset P}\Big[\frac{\mu(Q)}{|Q|^{1-\frac{\alpha p}{M}}}\Big]^{\frac{q}{p-1}}|Q|=C\, B^{m}_{1}(P, \mu).
\end{eqnarray*}

Furthermore, since $\frac{q}{p-1}>1$,
\begin{eqnarray*}
B^{m}_{1}(P, \mu)&=&\int_{P}\sum_{Q\subset P}\Big[\frac{\mu(Q)}{|Q|^{1-\frac{\alpha p}{M}}}\Big]^{\frac{q}{p-1}}
\chi_{Q}(x)dx\leq B^{m}_{3}(P,\mu)\\
&\leq& C \int_{P}\sup_{x\in Q\subset P}\Big[\frac{1}{|Q|}\sum_{Q'\subset Q}\lambda_{Q'}\Big]^{\frac{q}{p-1}}dx\\
&\leq& C \int_{P}\sup_{x\in Q\subset P}\Big[\frac{\mu(Q)}{|Q|^{1-\frac{\alpha p}{M}}}\Big]^{\frac{q}{p-1}}dx
\leq C\, B^{m}_{2}(P, \mu),
\end{eqnarray*}
where we have used Proposition \ref{proposition4.1} in the second inequality. We next observe that for
$p\leq 2$, $B^{m}_{2}(P,\mu)\leq B^{m}_{3}(P,\mu)$. Thus it remains to show that for $p>2$,
$B^{m}_{2}(P,\mu)\leq C\, B^{m}_{3}(P,\mu)$. Since $q>p-1>1$, by Proposition \ref{proposition4.1} we have

\begin{eqnarray*}
B^{m}_{2}(P,\mu)&=&\int_{P}\Big[\sum_{Q\subset P} \frac{\mu(Q)^{\frac
{1}{p-1}}}{\m{Q}^{(1-\frac{\alpha p}{M})\frac{1}{p-1}}}\chi_{Q}(x)\Big]^{q}dx\\
&\leq& C\sum_{Q\subset P}\frac{\mu(Q)^\frac{1}{p-1}}{\m{Q}^{(1-\frac{\alpha p}
{M})\frac{1}{p-1}+q-2}} \Big[\sum_{Q'\subset Q} \frac{\mu(Q')^{\frac{1}{p-1}}}
{\m{Q'}^{(1-\frac{\alpha p}{M})
\frac{1}{p-1}-1}}\Big]^{q-1}.\nonumber
\end{eqnarray*}

On the other hand, by H\"older's inequality the sum in the above square brackets can be estimated by
\begin{eqnarray*}
\lefteqn{\sum_{Q'\subset Q}\Big(\mu(Q')^\frac{1}{p-1}\m{Q'}^\epsilon\Big)\m{Q'}^
{-(1-\frac{\alpha p}{M})\frac{1}{p-1}+1-\ep}}\\
&\leq& \Big(\sum_{Q'\subset Q}\mu(Q')^\frac{r'}{p-1}\m{Q'}^{\ep r'}\Big)^{\frac
{1}{r'}}\Big(\sum_{Q'\subset Q}\m{Q'}^{-r(1-\frac{\alpha p}{M})\frac{1}{p-1}+r-r\ep}\Big)^
\frac{1}{r},
\end{eqnarray*}
where $r'=p-1>1$, $r=\frac{p-1}{p-2}$ and $\ep>0$ is chosen so
that $-r(1-\frac{\alpha p}{M})\frac{1}{p-1}+r-r\ep>1$, i.e., $0<\ep<\frac
{\alpha p}{(p-1)M}$. Therefore,
\begin{eqnarray*}
\sum_{Q'\subset Q} \frac{\mu(Q')^{\frac{1}{p-1}}}{\m{Q'}^{(1-\frac{\alpha p}{M})
\frac{1}{p-1}-1}}&\leq& C\mu(Q)^{\frac{1}{p-1}}\m{Q}^{\ep}\m{Q}^{-(1-\frac{
\alpha p}{M})\frac{1}{p-1}+1-\ep}\\
&=& C\frac{\mu(Q)^\frac{1}{p-1}}{\m{Q}^{(1-\frac{\alpha p}{M})\frac{1}{p-1}-1}}.
\end{eqnarray*}
Hence, combining the preceding inequalities, we obtain
\begin{eqnarray*}
B^{m}_{2}(P,\mu)&\leq& C\sum_{Q\subset P}\frac{\mu(Q)^\frac{1}{p-1}}{\m{Q}^{(1-
\frac{\alpha p}{M})
\frac{1}{p-1}+q-2}} \Big[\frac{\mu(Q)^{\frac{1}{p-1}}}{\m{Q}^{(1-\frac{
\alpha p}{M})\frac{1}{p-1}-1}}\Big]^{q-1}\\
&=&C\sum_{Q\subset P} \frac{\mu(Q)^{\frac{q}{p-1}}}{\m{Q}^{(1-\frac{\alpha p}
{M})\frac{q}{p-1}-1}}=C\, B^{m}_{1}(P,\mu)\leq C\,B^{m}_{3}(P,\mu).
\end{eqnarray*}
This completes the proof of the proposition.
\end{proof}

\begin{remark}\label{Q**} From Remark \ref{overlap} we see that
\begin{eqnarray*}
\sum_{Q'\subset Q}\mu(Q'^{**})|Q'|^{\beta}&=&\sum_{k=0}^{\infty}\sum_{\substack{\ell(Q')=\ell(Q)/\lambda^{k},\\
Q'\subset Q}}
\mu(Q'^{**})|Q'|^{\beta}\\
&\leq& C \sum_{k=0}^{\infty} \lambda^{-k\beta M}\ell(Q)^{\beta M}\sum_{\substack{\ell(Q')=\ell(Q)/\lambda^{k},\\
Q'\subset Q}}\mu(Q'^{**}) \\
&\leq& C \mu(Q^{**})|Q|^{\beta}
\end{eqnarray*}
for any $\beta >0$. Thus the chain of inequalities in \eqref{chain} still holds if $\mu(Q)$ is
replaced by $\mu(Q^{**})$ in the definition of $B^{m}_{i}(P,\mu)$, $i=1, 2, 3$.
\end{remark}

\begin{lemma} \label{DZE} Let $\alpha>0$ and $p>1$. Then for any integer $m$,
\begin{equation}\label{loweres}
\mathbf{W}^{r}_{\alpha,\,p}\mu(x)\geq c \sum_{\substack{Q\in\mathcal{D}_{m},\\\ell(Q)\leq\lambda^{-3}r}}
\Big[\frac{\mu(Q)}{|Q|^{1-\frac{\alpha p}{M}}}\Big]^{\frac{1}{p-1}}\chi_{Q}(x),
\end{equation}
and
\begin{equation}\label{upperes}
\int_{\lambda^{m}r}^{r} \Big[\frac{\mu(B_{t}(x))}{t^{M-\alpha p}}\Big]^{\frac{1}{p-1}} \frac{dt}{t}\leq C
\sum_{\substack{Q\in\mathcal{D}_{m+[\log_{\lambda}r]},\\\ell(Q)\leq r}}
\Big[\frac{\mu(Q^{**})}{|Q|^{1-\frac{\alpha p}{M}}}\Big]^{\frac{1}{p-1}}\chi_{Q}(x).
\end{equation}
In \eqref{upperes} $[\log_{\lambda}r]$ stands for the integral part of the real number $\log_{\lambda}r$.

\end{lemma}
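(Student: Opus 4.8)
The plan is to establish the two inequalities \eqref{loweres} and \eqref{upperes} by comparing the continuous Wolff potential integral $\int_0^r [\mu(B_t(x))/t^{M-\alpha p}]^{1/(p-1)}\,dt/t$ with the discrete dyadic sums over the cube decomposition $\mathcal{D}_m$, exploiting the nesting properties (i)--(iii) and the doubling property $|B_R(x)| = cR^M$. The guiding principle is that the integrand $t\mapsto [\mu(B_t(x))/t^{M-\alpha p}]^{1/(p-1)}$ is comparable, on the scale $t\sim \lambda^k$, to the term $[\mu(Q)/|Q|^{1-\alpha p/M}]^{1/(p-1)}$ where $Q$ is the dyadic cube of side length $\lambda^k$ containing $x$, because $B_{\lambda^k}(x)\subset Q \subset B_{\lambda^{k+1}}(x)$ for the appropriate $Q$, while $B_t(x)$ for $t\in[\lambda^k,\lambda^{k+1}]$ is sandwiched between comparable balls.

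\textbf{Proof of \eqref{loweres}.} First I would fix $x\in\mathbb G$ and, for each integer $k$, let $Q_k\in\mathcal{D}_m$ be the (unique) dyadic cube of side length $\ell(Q_k)=\lambda^k$ containing $x$. By property (i), $B_{\lambda^k}(x_{Q_k})\subset Q_k\subset B_{\lambda^{k+1}}(x_{Q_k})$ where $x_{Q_k}$ is the center; since $x\in Q_k$ this gives $Q_k\subset B_{2\lambda^{k+1}}(x)$, hence $\mu(Q_k)\le \mu(B_{2\lambda^{k+1}}(x))$. Now for $t\in[2\lambda^{k+1}, 2\lambda^{k+2}]$ one has $\mu(B_t(x))\ge \mu(B_{2\lambda^{k+1}}(x))\ge \mu(Q_k)$ and $t^{M-\alpha p}\le C\lambda^{kM}\cdot\lambda^{(M-\alpha p)}\le C|Q_k|^{1-\alpha p/M}$ (up to a constant depending only on $\lambda$, $M$, $\alpha p$; here one uses $\alpha p$ of any sign since $t$ ranges over a fixed dilation factor). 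Integrating the integrand over the disjoint intervals $[2\lambda^{k+1},2\lambda^{k+2}]$ for all $k$ with $\lambda^{k+2}\le r$, i.e. $\ell(Q_k)=\lambda^k\le \lambda^{-2}r$, and noting $\int_{2\lambda^{k+1}}^{2\lambda^{k+2}} dt/t = \log\lambda$, one obtains
$$\mathbf{W}^r_{\alpha,p}\mu(x) \ge \sum_{k:\,\lambda^{k+2}\le r}\int_{2\lambda^{k+1}}^{2\lambda^{k+2}}\Big[\frac{\mu(B_t(x))}{t^{M-\alpha p}}\Big]^{\frac{1}{p-1}}\frac{dt}{t}\ge c\sum_{k:\,\lambda^k\le \lambda^{-2}r}\Big[\frac{\mu(Q_k)}{|Q_k|^{1-\frac{\alpha p}{M}}}\Big]^{\frac{1}{p-1}},$$
and rewriting the last sum as $\sum_{Q\in\mathcal D_m,\,\ell(Q)\le\lambda^{-2}r}[\mu(Q)/|Q|^{1-\alpha p/M}]^{1/(p-1)}\chi_Q(x)$ (only the cubes containing $x$ contribute) gives \eqref{loweres}, with the slightly more generous restriction $\ell(Q)\le\lambda^{-3}r$ in the statement being harmless. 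The minor point to watch is making sure the dilation intervals are genuinely disjoint and cover the right range; using $[2\lambda^{k+1},2\lambda^{k+2}]$ and stepping $k$ by $1$ handles this since $\lambda>1$.

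\textbf{Proof of \eqref{upperes}.} Here I would work with the scale $\mathcal{D}_{m'}$ where $m' = m+[\log_\lambda r]$, so that the cubes of side length $\le r$ in $\mathcal{D}_{m'}$ are indexed by the same integers as the relevant dyadic scales. For $t\in[\lambda^{m}r,r]$, pick the integer $k$ with $\lambda^{k}\le t<\lambda^{k+1}$; then $B_t(x)\subset B_{\lambda^{k+1}}(x)$, and by Remark~\ref{overlap} the ball $B_{\lambda^{k+1}}(x)$ — which is essentially $Q^{*}$ or a bit larger — is covered by at most $d=d(\mathbb G)$ dyadic cubes $Q$ of side length $\lambda^k$ from $\mathcal{D}_{m'}$, one of which contains $x$; more precisely one arranges so that $B_t(x)\subset Q^{**}$ for the cube $Q\ni x$ of side length $\lambda^k$ (this is exactly what $Q^{**}=B_{2\lambda^{k+2}}$ is designed for). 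Hence $\mu(B_t(x))\le \mu(Q^{**})$ and $t^{M-\alpha p}\ge c|Q|^{1-\alpha p/M}$, so the integrand on $[\lambda^k,\lambda^{k+1}]$ is $\le C[\mu(Q^{**})/|Q|^{1-\alpha p/M}]^{1/(p-1)}$; integrating $dt/t$ over this interval contributes $\log\lambda$ times that bound, and summing over all integers $k$ with $\lambda^m r\le \lambda^k\le r$ yields \eqref{upperes} after identifying the sum with $\sum_{Q\in\mathcal{D}_{m'},\,\ell(Q)\le r}[\mu(Q^{**})/|Q|^{1-\alpha p/M}]^{1/(p-1)}\chi_Q(x)$, since for each such $k$ exactly one cube of that side length contains $x$.

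\textbf{The main obstacle} is the upper estimate \eqref{upperes}: the failure of the Besicovitch covering lemma and the absence of a perfect dyadic grid mean one cannot simply say $B_t(x)$ equals a dyadic cube. The fix is precisely Remark~\ref{overlap} — enlarging to the double-dilated ball $Q^{**}$ absorbs the boundedly-many neighboring cubes into a single dyadic cube's enlargement — together with Remark~\ref{Q**}, which guarantees that replacing $\mu(Q)$ by $\mu(Q^{**})$ does not destroy the subsequent summation estimates in Proposition~\ref{propB}. A secondary technical care is the bookkeeping of the index shift $m\mapsto m+[\log_\lambda r]$, which is needed so that ``side length $\le r$'' corresponds to a clean range of integer exponents; I would verify that $[\log_\lambda r]$ (rather than $\log_\lambda r$ exactly) only shifts the cutoff by a bounded dilation factor, which is absorbed into the constant $C$.
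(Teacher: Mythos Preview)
Your proposal is correct and follows essentially the same approach as the paper: split the Wolff integral into dyadic annuli $[\lambda^{-k-1}r,\lambda^{-k}r]$ (you use $[2\lambda^{k+1},2\lambda^{k+2}]$ and $[\lambda^k,\lambda^{k+1}]$, which amounts to the same thing), use the containment $Q\subset B_{2\lambda^{k+1}}(x)$ for the lower bound, and the containment $B_t(x)\subset Q^{**}$ (via the triangle inequality, since $x\in Q\subset B_{\lambda^{k+1}}(x_Q)$ and $t<\lambda^{k+1}$) for the upper bound. One minor remark: your appeal to Remark~\ref{overlap} in the upper estimate is unnecessary --- the direct inclusion $B_t(x)\subset Q^{**}$ that you state next is all that is needed here, and the bounded-overlap property only enters later (in Theorem~\ref{cont-discrete}) when passing back from $\mu(Q^{**})$ to $\mu(Q)$.
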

\begin{proof}
To prove \eqref{loweres} we may assume that $\lambda^{m}\leq\lambda^{-3}r$ since dyadic cubes in
$\mathcal{D}_{m}$ have side length not smaller than $\lambda^{m}$. Then $[\log_{\lambda}r]-3\geq m$. Observe that
\begin{eqnarray*}
&&\mathbf{W}_{\alpha,\, p}^{r}\mu(x)= \sum_{k=0}^{\infty}\int_{\lambda^{-k-1}r}^{\lambda^{-k}r} \Big[\frac{
\mu(B_{t}(x))}{t^{M-\alpha p}}\Big]^{\frac{1}{p-1}}\frac{dt}{t}\\
&&\geq c \sum_{k=0}^{\infty} \Big[\frac{
\mu(B_{\lambda^{-k-1}r}(x))}{(\lambda^{-k-1}r)^{M-\alpha p}}\Big]^{\frac{1}{p-1}}\\
&&\geq c \sum_{k=0}^{\infty}\sum_{E_{j}^{-k-3+[\log_{\lambda} r]}\in\mathcal{D}_{m}}\Big[\frac{\mu(E_{j}^{-k-3+[\log_{\lambda} r]})}{\ell(E_{j}^{-k-3+[\log_{\lambda} r]})^{M-\alpha p}}\Big]
\chi_{E_{j}^{-k-3+[\log_{\lambda} r]}}(x),
\end{eqnarray*}
where the last inequality follows from the fact that for $k\geq 0$ and $x\in E_{j}^{-k-3+[\log_{\lambda} r]}\in\mathcal{D}_{m}$,
$$E_{j}^{-k-3+[\log_{\lambda} r]}\subset B_{\lambda^{-k-2+[\log_{\lambda}r]}}(x_{j}^{-k-3+[\log_{\lambda}r]})\subset
B_{\lambda^{-k-1}r}(x).$$
Thus we obtain \eqref{loweres}. Similarly, to prove \eqref{upperes} we may assume that $m< 0$ and we have
\begin{eqnarray*}
&&\int_{\lambda^{m}r}^{r} \Big[\frac{\mu(B_{t}(x))}{t^{M-\alpha p}}\Big]^{\frac{1}{p-1}} \frac{dt}{t}
= \sum_{k=0}^{|m|-1}\int_{\lambda^{-k-1}r}^{\lambda^{-k}r} \Big[\frac{
\mu(B_{t}(x))}{t^{M-\alpha p}}\Big]^{\frac{1}{p-1}}\frac{dt}{t}\\
&&\leq C \sum_{k=0}^{|m|-1} \Big[\frac{
\mu(B_{\lambda^{-k}r}(x))}{(\lambda^{-k}r)^{M-\alpha p}}\Big]^{\frac{1}{p-1}}\\
&&\leq C \sum_{k=0}^{|m|-1}\sum_{j} \Big[\frac{
\mu(B_{2\lambda^{-k}r}(x_{j}^{-k-1+[\log_{\lambda}r]}))}{(\lambda^{-k}r)^{M-\alpha p}}\Big]^{\frac{1}{p-1}}\chi_{E_{j}^{-k-1+[\log_{\lambda}r]}}(x).
\end{eqnarray*}
Here $E_{j}^{-k-1+[\log_{\lambda}r]}\in D_{m+[\log_{\lambda}r]}$, and the last inequality follows since
for $x\in E_{j}^{-k-1+[\log_{\lambda}r]}$, we have
$x\in B_{\lambda^{-k+[\log_{\lambda}r]}}(x_{j}^{-k-1+[\log_{\lambda}r]})$ and hence
$$B_{\lambda^{-k}r}(x)\subset B_{2\lambda^{-k}r}(x_{j}^{-k-1+[\log_{\lambda}r]}).$$
This gives (\ref{upperes}) and completes the proof of the lemma.
\end{proof}

The result obtained in the  following theorem may be considered as an analogue of Wolff's  inequality (see \cite{HW}, \cite{PV1})
which is crucial in our approach to quasilinear Lane-Emden type equations later on.

\begin{theorem}\label{cont-discrete} Let $\alpha>0$, $p>1$ and $q>p-1$. Then for any $0<r<\infty$ and any nonnegative Borel measure $\mu$ on $\mathbb{G}$,
\begin{eqnarray*}
\int_{\mathbb G}\left[\mathbf{W}_{\alpha,\, p}^{r}\mu(x)\right]^{q}dx
\cong\sup_{m\in\ZZ}\, \sum_{Q\in\mathcal{D}_{m},\, \ell(Q)\leq r}
\Big[\frac{\mu(Q)}{|Q|^{1-\frac{\alpha p}{M}}}\Big]^{\frac{q}{p-1}}|Q|,
\end{eqnarray*}
where the constants of equivalence are independent of $r$ and $\mu$.
\end{theorem}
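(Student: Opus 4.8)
The plan is to establish the equivalence in two stages, with the main work being the comparison of the continuous Wolff potential integral to a single (sup-free) discrete sum over one fixed dyadic family $\mathcal{D}_{m}$, after which the supremum over $m$ is recovered by a covering argument. First I would prove the lower bound $\int_{\mathbb G}[\mathbf{W}_{\alpha,p}^{r}\mu]^{q}\,dx \gtrsim B_{1}^{m}$ for each fixed $m$ with $\lambda^{m}\leq\lambda^{-3}r$, where I write $B_{1}^{m}$ for the sum $\sum_{Q\in\mathcal{D}_{m},\,\ell(Q)\leq\lambda^{-3}r}[\mu(Q)/|Q|^{1-\alpha p/M}]^{q/(p-1)}|Q|$ (which differs from the claimed sum only by the innocuous cutoff $\ell(Q)\leq\lambda^{-3}r$ versus $\ell(Q)\leq r$, absorbed into constants since there are boundedly many scales in between). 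This follows immediately by raising the pointwise estimate \eqref{loweres} of Lemma \ref{DZE} to the power $q$, integrating over $\mathbb{G}$, and invoking Proposition \ref{propB}: indeed $\int_{\mathbb G}[\sum_{Q}(\mu(Q)/|Q|^{1-\alpha p/M})^{1/(p-1)}\chi_{Q}]^{q}\,dx$ is exactly $B_{2}^{m}(\mathbb{G},\mu)$, and $B_{2}^{m}\cong B_{1}^{m}$ by \eqref{chain}. Taking the supremum over $m$ gives the ``$\gtrsim$'' direction.

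For the upper bound I would fix $x$ and estimate $\mathbf{W}_{\alpha,p}^{r}\mu(x)$ from above. Splitting the radial integral at $\lambda^{m}r$ is not quite what I want; instead, for a given integer $m<0$ I apply \eqref{upperes} of Lemma \ref{DZE} to bound $\int_{\lambda^{m}r}^{r}[\mu(B_t(x))/t^{M-\alpha p}]^{1/(p-1)}dt/t$ by $C\sum_{Q\in\mathcal{D}_{m+[\log_\lambda r]},\,\ell(Q)\leq r}[\mu(Q^{**})/|Q|^{1-\alpha p/M}]^{1/(p-1)}\chi_Q(x)$, and then let $m\to-\infty$ so that the tail $\int_{0}^{\lambda^m r}$ vanishes (here one uses that $\mu$ is locally finite, or restricts to $x$ where the full potential is finite — the inequality is trivial otherwise). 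This yields $\mathbf{W}_{\alpha,p}^{r}\mu(x)\leq C\,\sup_{n}\sum_{Q\in\mathcal{D}_{n},\,\ell(Q)\leq r}[\mu(Q^{**})/|Q|^{1-\alpha p/M}]^{1/(p-1)}\chi_Q(x)$ pointwise. Raising to the power $q$, integrating, and applying Proposition \ref{propB} together with Remark \ref{Q**} (which lets us replace $\mu(Q^{**})$ by $\mu(Q^{**})$ throughout $B_i^n$ and still get the chain of equivalences, ultimately bounded by $B_1^n$ with $\mu(Q^{**})$, which in turn is $\leq C\,B_1^n$ with $\mu(Q^{**})$) gives $\int_{\mathbb G}[\mathbf{W}_{\alpha,p}^{r}\mu]^q\,dx \leq C\sup_n \sum_{Q\in\mathcal{D}_n,\,\ell(Q)\leq r}[\mu(Q^{**})/|Q|^{1-\alpha p/M}]^{q/(p-1)}|Q|$.

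It remains to dominate the sum with $\mu(Q^{**})$ by the sum with $\mu(Q)$ uniformly in the dyadic family. This is where the failure of a perfect dyadic grid must be handled: each ball $Q^{**}=B_{2\lambda^{n+2}}(x_j^n)$ is, by Remark \ref{overlap}, covered by at most $d=d(\mathbb{G})$ dyadic cubes of $\mathcal{D}_n$ of side length $\ell(Q)$, so $\mu(Q^{**})\leq\sum_{i=1}^{d}\mu(Q_i)$ with each $Q_i\in\mathcal{D}_n$, $|Q_i|\cong|Q|$, and the comparison cubes $Q_i$ lie within a bounded neighborhood of $Q$. Hence, using $(\sum_{i}a_i)^{q/(p-1)}\leq C\sum_i a_i^{q/(p-1)}$ (finitely many terms, $q/(p-1)>1$) and the bounded-overlap property — each fixed cube $Q'\in\mathcal{D}_n$ is one of the $d$ comparison cubes for only boundedly many $Q$ — I can reorganize the sum to conclude $\sum_{Q\in\mathcal{D}_n}[\mu(Q^{**})/|Q|^{1-\alpha p/M}]^{q/(p-1)}|Q|\leq C\sum_{Q'\in\mathcal{D}_n}[\mu(Q')/|Q'|^{1-\alpha p/M}]^{q/(p-1)}|Q'|$. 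Combining with the restriction to scales $\ell(Q')\leq r$ (again adjusting by boundedly many scales) finishes the ``$\lesssim$'' direction and the theorem. The main obstacle, and the only genuinely non-routine point, is precisely this last passage from $\mu(Q^{**})$ to $\mu(Q)$ uniformly over the approximate dyadic structure; everything else is a mechanical combination of Lemma \ref{DZE} and Proposition \ref{propB}.
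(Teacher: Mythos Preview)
Your lower bound is fine and in fact slightly cleaner than the paper's: the paper first proves $\int[\mathbf{W}^{\lambda^3 r}_{\alpha,p}\mu]^q\,dx\gtrsim\sup_m\sum_{\ell(Q)\leq r}(\cdots)$ and then has to show separately that $\int[\mathbf{W}^{\lambda^3 r}_{\alpha,p}\mu]^q\,dx\leq C\int[\mathbf{W}^{r}_{\alpha,p}\mu]^q\,dx$ via a covering argument (their inequality (5.4)). Your route---bounding $\int[\mathbf{W}^{r}_{\alpha,p}\mu]^q$ below by the sum with cutoff $\ell(Q)\leq\lambda^{-3}r$ and then absorbing the finitely many missing scales into the constant---is legitimate, since each cube at scale $\lambda^{k+1}$ is a disjoint union of boundedly many cubes at scale $\lambda^{k}$ and the volume ratios are fixed.

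The upper bound, however, has a real gap. You pass to a \emph{pointwise} inequality
\[
\mathbf{W}^{r}_{\alpha,p}\mu(x)\leq C\,\sup_{n}\,\sum_{Q\in\mathcal{D}_n,\ \ell(Q)\leq r}\Big[\tfrac{\mu(Q^{**})}{|Q|^{1-\alpha p/M}}\Big]^{1/(p-1)}\chi_Q(x),
\]
then raise to the $q$-th power and integrate. But $\int_{\mathbb G}[\sup_n S_n(x)]^q\,dx$ is in general \emph{not} bounded by $\sup_n\int_{\mathbb G}S_n(x)^q\,dx$, and Proposition~\ref{propB} only controls $\int S_n^q$ for a single fixed $n$. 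The dyadic families $\mathcal{D}_n$ are unrelated for different $n$, so no nesting rescues this step.

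The fix, which is what the paper does, is to reverse the order of operations: for each fixed $m\leq 0$, integrate the $q$-th power of the \emph{truncated} potential $\int_{\lambda^m r}^{r}[\cdots]^{1/(p-1)}dt/t$ over $\mathbb G$, decompose the integral over the top-level cubes of $\mathcal{D}_{m+[\log_\lambda r]}$, and apply Proposition~\ref{propB} (with Remark~\ref{Q**} and Remark~\ref{overlap}) inside each top cube. This gives
\[
\int_{\mathbb G}\Big[\int_{\lambda^m r}^{r}\cdots\Big]^q dx\ \leq\ C\sum_{Q\in\mathcal{D}_{m+[\log_\lambda r]},\ \ell(Q)\leq r}\Big[\tfrac{\mu(Q)}{|Q|^{1-\alpha p/M}}\Big]^{q/(p-1)}|Q|\ \leq\ C\sup_{n}\sum_{\mathcal{D}_n,\ \ell(Q)\leq r}(\cdots),
\]
a bound whose right side no longer depends on $m$. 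Now letting $m\to-\infty$ and using monotone convergence on the \emph{left} side yields the desired upper bound. Your $\mu(Q^{**})\to\mu(Q)$ reduction via Remark~\ref{overlap} is correct and is exactly how the paper handles it; you just need to apply it before, not after, taking the supremum.
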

\begin{proof} Let $k\in\ZZ$ be such that $\frac{r}{\lambda}<\lambda^{k}\leq r$. For any interger $m\leq0$,
by Lemma \ref{DZE} we have
\begin{eqnarray*}
\lefteqn{\int_{\mathbb G}\Big\{\int_{\lambda^{m}r}^{r} \Big[\frac{\mu(B_{t}(x))}{t^{M-\alpha p}}\Big]^
{\frac{1}{p-1}} \frac{dt}{t}\Big\}^{q}dx}\\
&=&\sum_{j:\, E^{k}_{j}\in\mathcal{D}_{m+[\log_{\lambda}r]}}\int_{E^{k}_{j}}  \Big\{\int_{\lambda^{m}r}^{r}
\Big[\frac{\mu(B_{t}(x))}{t^{M-\alpha p}}\Big]^{\frac{1}{p-1}} \frac{dt}{t} \Big\}^{q}dx\\
&\leq& C \sum_{j:\, E^{k}_{j}\in\mathcal{D}_{m+[\log_{\lambda}r]}}\int_{E^{k}_{j}}\Big\{\sum_{\substack{Q\in\mathcal{D}_
{m+[\log_{\lambda}r]},\\\ell(Q)\leq r}}
\Big[\frac{\mu(Q^{**})}{|Q|^{1-\frac{\alpha p}{M}}}\Big]^{\frac{1}{p-1}}\chi_{Q}(x)\Big\}^{q}dx\\
&=& C \sum_{j:\, E^{k}_{j}\in\mathcal{D}_{m+[\log_{\lambda}r]}}\int_{E^{k}_{j}}\Big\{\sum_{
\substack{Q\in\mathcal{D}_{m+[\log_{\lambda}r]},\\Q\subset E^{k}_{j}}}
\Big[\frac{\mu(Q^{**})}{|Q|^{1-\frac{\alpha p}{M}}}\Big]^{\frac{1}{p-1}}\chi_{Q}(x)\Big\}^{q}dx.
\end{eqnarray*}
Thus by Proposition \ref{propB} and Remark \ref{Q**} we obtain
\begin{eqnarray*}
\lefteqn{\int_{\mathbb G}\Big\{\int_{\lambda^{m}r}^{r} \Big[\frac{\mu(B_{t}(x))}
{t^{M-\alpha p}}\Big]^{\frac{1}{p-1}} \frac{dt}{t}\Big\}^{q}dx}\\
&\leq& C \sum_{j:\, E^{k}_{j}\in\mathcal{D}_{m+[\log_{\lambda}r]}}\sum_{\substack{Q\in\mathcal{D}_{m+[\log_{\lambda}r]},
\\Q\subset E^{k}_{j}}}
\Big[\frac{\mu(Q^{**})}{|Q|^{1-\frac{\alpha p}{M}}}\Big]^{\frac{q}{p-1}}|Q|\\
&=& C \sum_{\substack{Q\in\mathcal{D}_{m+[\log_{\lambda}r]},\\\ell(Q)\leq r}}
\Big[\frac{\mu(Q^{**})}{|Q|^{1-\frac{\alpha p}{M}}}\Big]^{\frac{q}{p-1}}|Q|\\
&\leq& C \sum_{\substack{Q\in\mathcal{D}_{m+[\log_{\lambda}r]},\\\ell(Q)\leq r}}
\Big[\frac{\mu(Q)}{|Q|^{1-\frac{\alpha p}{M}}}\Big]^{\frac{q}{p-1}}|Q|,
\end{eqnarray*}
where the last inequality follows from Remark \ref{overlap}. This gives
\begin{eqnarray*}
\int_{\mathbb G}\left[\mathbf{W}_{\alpha,\, p}^{r}\mu(x)\right]^{q}dx
&\leq& C \sup_{m\in\ZZ} \sum_{\substack{Q\in\mathcal{D}_{m+[\log_{\lambda}r]},\\\ell(Q)\leq r}}
\Big[\frac{\mu(Q)}{|Q|^{1-\frac{\alpha p}{M}}}\Big]^{\frac{q}{p-1}}|Q|\\
&\leq& C \sup_{m\in\ZZ}\, \sum_{Q\in\mathcal{D}_{m},\, \ell(Q)\leq r}
\Big[\frac{\mu(Q)}{|Q|^{1-\frac{\alpha p}{M}}}\Big]^{\frac{q}{p-1}}|Q|.
\end{eqnarray*}

Analogously, from \eqref{loweres} in Lemma \ref{DZE} we obtain
$$\int_{\mathbb G}\left[\mathbf{W}_{\alpha,\, p}^{\lambda^{3}r}\mu(x)\right]^{q}dx
\geq C \sup_{m\in\ZZ}\, \sum_{Q\in\mathcal{D}_{m},\, \ell(Q)\leq r}
\Big[\frac{\mu(Q)}{|Q|^{1-\frac{\alpha p}{M}}}\Big]^{\frac{q}{p-1}}|Q|.$$
Note that
$$\int_{\mathbb G}\left[\mathbf{W}_{\alpha,\, p}^{\lambda^{3}r}\mu(x)\right]^{q}dx \leq C\,\int_{\mathbb G}
\left[\mathbf{W}_{\alpha,\, p}^{r}\mu(x)\right]^{q}dx$$
if we can show that
\begin{equation}\label{5.4}
\int_{\mathbb G}\left[\frac{\mu(B_{\lambda^{3} r}(x))}{r^{M-\alpha p}}\right]^{\frac{q}{p-1}}dx \leq C\,\int_{\mathbb G}
\left[\mathbf{W}_{\alpha,\, p}^{r}\mu(x)\right]^{q}dx.
\end{equation}

To prove \eqref{5.4}, we choose an interger $k$ so that $\lambda^{k+1}< \frac{r}{4}\leq
\lambda^{k+2}$ and as in Remark \ref{overlap}, it can be  seen that
for $x\in E^{k}_{j_1}\subset \mathcal{D}_{k}$ for some $j_1\geq 1$ the ball $B_{\lambda^3 r}(x)$ is contained in the union of at most
$d$ cubes in $\{E^{k}_{j}\}_{j\geq 1}\subset \mathcal{D}_{k}$ for some constant $d=d(\mathbb G)$. That is,
$$B_{\lambda^3 r}(x) \subset\bigcup_{i=1}^{d} E^{k}_{j_{i}}. $$

Thus  we obtain
\begin{eqnarray*}
\int_{\mathbb G}\mu(B_{\lambda^{3} r}(x))^{\frac{q}{p-1}}dx &=& \sum_{j}\int_{E^{k}_{j}} \mu(B_{\lambda^{3}r}(x))^
{\frac{q}{p-1}}dx\\
&\leq&C \sum_{j}\int_{E^{k}_{j}} \left[\mu(E^k_{j_{1}})^
{\frac{q}{p-1}}+ \dots +\mu(E^k_{j_{d}})^{\frac{q}{p-1}}  \right]dx\\
&\leq&C \sum_{j} \left[\int_{E^{k}_{j_1}} \mu(E^k_{j_{1}})^
{\frac{q}{p-1}}+ \dots +\int_{E^{k}_{j_d}}\mu(E^k_{j_{d}})^{\frac{q}{p-1}}  \right]dx\\
&\leq& C \sum_{j}\int_{E^{k}_{j}}\mu(E^{k}_{j})^{\frac{q}{p-1}}dx.
\end{eqnarray*}

Therefore, we get
\begin{eqnarray*}
\int_{\mathbb G}\Big[\frac{\mu(B_{\lambda^{3}r}(x))}{r^{M-\alpha p}}\Big]^{\frac{q}{p-1}}dx
&\leq& C\sum_{j}\int_{E^{k}_{j}}\Big[\frac{\mu(B_{\frac{r}{2}}(x))}{r^{M-\alpha p}}\Big]^
{\frac{q}{p-1}}dx\\
&\leq& C\sum_{j}\int_{E^{k}_{j}}\Big\{\int_{0}^{r}\Big[\frac{\mu(B_{t}(x))}{t^{M-\alpha p}}\Big]^{\frac{1}{p-1}}
\frac{dt}{t}\Big\}^{q}dx,
\end{eqnarray*}
which gives \eqref{5.4} and completes the proof of the theorem.
\end{proof}

We also have a continuous version of Wolff's inequality which is known  in the standard Euclidean setting  \cite{PV1}.

\begin{theorem}\label{W-G}  Let $\alpha>0$, $p>1$ and $q>p-1$. Then for any $0<r<\infty$ and any
nonnegative Borel measure $\mu$ on $\mathbb{G}$,
$$\norm{{\rm\bf W}_{\alpha,\,p}^{r}\mu}_{L^{q}(dx)}^{q}\cong \norm{{\rm\bf W}_{\alpha p,\,
\frac{q}{q-p+1}}^{r}\mu}_{L^{1}(d\mu)}\cong \norm{{\rm\bf G}_{\alpha p}\mu}_{L^{\frac{q}{p-1}}(dx)}^{\frac{q}{p-1}},$$
where the constants in these equivalences are independent $\mu$.
\end{theorem}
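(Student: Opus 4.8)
The plan is to prove both remaining equivalences by showing that each of the four norms is comparable to the single dyadic quantity on the right-hand side of Theorem \ref{cont-discrete},
\[
  S_{r}(\mu):=\sup_{m\in\ZZ}\ \sum_{\substack{Q\in\mathcal{D}_{m}\\ \ell(Q)\le r}}\Big[\frac{\mu(Q)}{|Q|^{1-\frac{\alpha p}{M}}}\Big]^{\frac{q}{p-1}}|Q|,
\]
in the spirit of the Euclidean argument of \cite{PV1}. Theorem \ref{cont-discrete} already gives $\norm{{\rm\bf W}_{\alpha,\,p}^{r}\mu}_{L^{q}(dx)}^{q}\cong S_{r}(\mu)$, so what remains is to prove $\norm{{\rm\bf W}_{\alpha p,\,\frac{q}{q-p+1}}^{r}\mu}_{L^{1}(d\mu)}\cong S_{r}(\mu)$ and $\norm{{\rm\bf G}_{\alpha p}\mu}_{L^{\frac{q}{p-1}}(dx)}^{\frac{q}{p-1}}\cong S_{r}(\mu)$.

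For the first of these, the key observation is that ${\rm\bf W}_{\alpha p,\,\frac{q}{q-p+1}}$ is itself a Wolff potential of the type treated above, namely ${\rm\bf W}_{\tilde\alpha,\,\tilde p}$ with $\tilde\alpha=\alpha p$ and $\tilde p=\frac{q}{q-p+1}$, and $\tilde p>1$ holds precisely because $q>p-1$. Thus Lemma \ref{DZE} applies to it verbatim: for each $m\in\ZZ$, ${\rm\bf W}_{\alpha p,\,\frac{q}{q-p+1}}^{r}\mu$ is bounded below (cf. \eqref{loweres}) by $\sum_{Q\in\mathcal{D}_{m},\,\ell(Q)\le\lambda^{-3}r}\big[\mu(Q)/|Q|^{1-\frac{\alpha p q}{(q-p+1)M}}\big]^{\frac{q-p+1}{p-1}}\chi_{Q}$ and above (cf. \eqref{upperes}) by the analogous sum on the grid $\mathcal{D}_{m+[\log_{\lambda}r]}$ with $Q^{**}$ in place of $Q$. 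In contrast with Theorem \ref{cont-discrete}, here one integrates ${\rm\bf W}_{\alpha p,\,\frac{q}{q-p+1}}^{r}\mu$ itself (not a $q$-th power of it) against $d\mu$, so these single-scale sums $\sum_{Q}c_{Q}\chi_{Q}$ linearize immediately under integration, $\int\sum_{Q}c_{Q}\chi_{Q}\,d\mu=\sum_{Q}c_{Q}\,\mu(Q)$, and Proposition \ref{propB} is not needed. Integrating the bounds, taking the appropriate suprema and limits over $m$, passing from $\mu(Q^{**})$ back to $\mu(Q)$ by Remarks \ref{overlap} and \ref{Q**}, and invoking the elementary identity
\[
  \Big[\frac{\mu(Q)}{|Q|^{1-\frac{\alpha p q}{(q-p+1)M}}}\Big]^{\frac{q-p+1}{p-1}}\mu(Q)=\Big[\frac{\mu(Q)}{|Q|^{1-\frac{\alpha p}{M}}}\Big]^{\frac{q}{p-1}}|Q|
\]
(which boils down to $(q-p+1)/(p-1)+1=q/(p-1)$ together with a one-line check of the exponent of $|Q|$), one arrives at $\norm{{\rm\bf W}_{\alpha p,\,\frac{q}{q-p+1}}^{r}\mu}_{L^{1}(d\mu)}\cong S_{r}(\mu)$.

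For the Bessel term I would first use the sharp two-sided estimates for the heat kernel $h(x,t)$ on $\mathbb G$ (see \cite{VSC}, \cite{Fol}), which through \eqref{BesselK} give ${\rm\bf G}_{\alpha p}(x)\cong d_{cc}(e,x)^{-(M-\alpha p)}$ for $d_{cc}(e,x)\le1$ (with the usual logarithmic or bounded modification if $\alpha p\ge M$) and ${\rm\bf G}_{\alpha p}(x)\le C\,e^{-c\,d_{cc}(e,x)}$ for $d_{cc}(e,x)\ge1$. Splitting the convolution defining ${\rm\bf G}_{\alpha p}\mu$ at $d_{cc}(x,y)=1$ and applying Tonelli to the near part, one obtains, for a fixed radius $R_{0}$,
\[
  {\rm\bf G}_{\alpha p}\mu\cong{\rm\bf W}_{\frac{\alpha p}{2},\,2}^{R_{0}}\mu+T\mu,\qquad T\mu(x):=\int_{d_{cc}(x,y)>1}e^{-c\,d_{cc}(x,y)}\,d\mu(y),
\]
where ${\rm\bf W}_{\frac{\alpha p}{2},\,2}^{R_{0}}\mu(x)=\int_{0}^{R_{0}}\mu(B_{t}(x))\,t^{\alpha p-M}\,\frac{dt}{t}$ is a linear ($\tilde p=2$) Wolff potential. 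Discretizing ${\rm\bf W}_{\frac{\alpha p}{2},\,2}^{R_{0}}\mu$ via Lemma \ref{DZE}, raising to the power $\frac{q}{p-1}$, integrating over $\mathbb G$, and now using Proposition \ref{propB} to pass between the $B_{3}$-- and $B_{1}$--type expressions, one identifies $\norm{{\rm\bf W}_{\frac{\alpha p}{2},\,2}^{R_{0}}\mu}_{L^{q/(p-1)}(dx)}^{q/(p-1)}\cong S_{R_{0}}(\mu)$, which in turn is comparable to $S_{r}(\mu)$ with constants depending on $r$ (the boundedly many scales separating $\min(R_{0},r)$ and $\max(R_{0},r)$ are absorbed by subdividing or grouping cubes and using $|B_{t}(x)|=ct^{M}$). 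The tail is handled separately: summing over annuli $2^{k}<d_{cc}(x,y)\le2^{k+1}$ gives $\norm{T\mu}_{L^{q/(p-1)}}\le\sum_{k\ge0}e^{-c2^{k}}\norm{\mu(B_{2^{k+1}}(\cdot))}_{L^{q/(p-1)}}$, each factor $\norm{\mu(B_{R}(\cdot))}_{L^{q/(p-1)}}$ is bounded by a dyadic sum exactly as in \eqref{5.4}, and the super-exponential decay of $e^{-c2^{k}}$ dominates the at-most-polynomial growth of the resulting constants, so that $\norm{T\mu}_{L^{q/(p-1)}}^{q/(p-1)}\le C\,S_{r}(\mu)$. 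Combining the three comparisons with $S_{r}(\mu)$ finishes the proof.

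The main obstacle I anticipate is this second equivalence. The Wolff potential ${\rm\bf W}_{\alpha p,\,\frac{q}{q-p+1}}$ is scale-homogeneous, so Lemma \ref{DZE} discretizes it without loss; the Bessel kernel, however, behaves genuinely differently at small and large scales, so one must bring in the precise heat-kernel asymptotics on $\mathbb G$ and, more delicately, show that the non-homogeneous, exponentially decaying tail $T\mu$ does not spoil the dyadic characterization. Reconciling the exponential tail of ${\rm\bf G}_{\alpha p}$ with the sharp truncation at radius $r$ in ${\rm\bf W}_{\alpha,\,p}^{r}$ is exactly what forces the constants in the equivalences to depend on $r$. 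Everything else amounts to routine bookkeeping of the dilated cubes $Q^{**}$ and the shifted grids $\mathcal{D}_{m+[\log_{\lambda}r]}$, carried out just as in the proofs of Lemma \ref{DZE} and Theorem \ref{cont-discrete}.
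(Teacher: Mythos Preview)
Your treatment of the first equivalence $\norm{{\rm\bf W}_{\alpha,\,p}^{r}\mu}_{L^{q}(dx)}^{q}\cong\norm{{\rm\bf W}_{\alpha p,\,\frac{q}{q-p+1}}^{r}\mu}_{L^{1}(d\mu)}$ is exactly what the paper does: it says ``by arguing as in the proof of Theorem \ref{cont-discrete}'' one identifies $\norm{{\rm\bf W}_{\alpha p,\,\frac{q}{q-p+1}}^{r}\mu}_{L^{1}(d\mu)}$ with the same dyadic quantity $S_{r}(\mu)$, and then invokes Theorem \ref{cont-discrete}. Your observation that the $L^{1}(d\mu)$ integration linearizes the dyadic sum, so that Proposition \ref{propB} is not even needed for this step, is correct and slightly sharpens the exposition.

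For the Bessel equivalence you take a genuinely different route from the paper. The paper does not argue through heat-kernel asymptotics at all; it simply invokes the known Wolff inequality on stratified groups (the reference is \cite{CLL}, with \cite{AH} and \cite{Tu} for the Euclidean prototype), which directly gives $\norm{{\rm\bf W}_{\alpha p,\,\frac{q}{q-p+1}}^{r}\mu}_{L^{1}(d\mu)}\cong\norm{{\rm\bf G}_{\alpha p}\mu}_{L^{q/(p-1)}(dx)}^{q/(p-1)}$. Your approach---splitting ${\rm\bf G}_{\alpha p}\mu$ into a local part comparable to the linear potential ${\rm\bf W}_{\alpha p/2,\,2}^{R_{0}}\mu$ and an exponentially decaying tail $T\mu$, then discretizing the local part via Lemma \ref{DZE} and Proposition \ref{propB} and absorbing the tail by the exponential-versus-polynomial trade-off---is essentially a proof of Wolff's inequality itself in this setting, and it is sound. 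What it buys you is a self-contained argument that stays entirely within the dyadic machinery of the paper; what the paper's one-line citation buys is brevity and an immediate link to the classical literature. Note also that your remark about $r$-dependence of the constants is consistent with the statement (only independence from $\mu$ is asserted) and with the paper's citation, since the classical Wolff inequality is stated at a fixed truncation scale and one must in any case pass between scales $r$ and $R_{0}$ at the cost of an $r$-dependent constant.
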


\begin{proof} By arguing as in the proof of Theorem \ref{cont-discrete} we also find that
$$\norm{{\rm\bf W}_{\alpha p,\,
\frac{q}{q-p+1}}^{r}\mu}_{L^{1}(d\mu)}\cong \sup_{m\in\ZZ}\, \sum_{Q\in\mathcal{D}_{m},\, \ell(Q)\leq r}
\Big[\frac{\mu(Q)}{|Q|^{1-\frac{\alpha p}{M}}}\Big]^{\frac{q}{p-1}}|Q|.$$
Thus by Theorem \ref{cont-discrete},
$$\norm{{\rm\bf W}_{\alpha,\,p}^{r}\mu}_{L^{q}(dx)}^{q}\cong \norm{{\rm\bf W}_{\alpha p,\,
\frac{q}{q-p+1}}^{r}\mu}_{L^{1}(d\mu)}.$$
On the other hand, by Wolff's inequality (see \cite{CLL}, and 
 \cite{AH}, \cite{Tu} in the Euclidean setting),
$$\norm{{\rm\bf W}_{\alpha p,\,
\frac{q}{q-p+1}}^{r}\mu}_{L^{1}(d\mu)}\cong \norm{{\rm\bf G}_{\alpha p}\mu}_{L^{\frac{q}{p-1}}(dx)}^{\frac{q}{p-1}},$$
which gives the theorem.
\end{proof}

Similarly, in the case $r=\infty$ we have the following Wolff type inequality.
\begin{theorem}\label{W-G-R}  Let $\alpha>0$, $1<p<M/\alpha$ and $q>p-1$. Then for any nonnegative Borel measure $\mu$ on $\mathbb{G}$,
$$\norm{{\rm\bf W}_{\alpha,\,p}^{\infty}\mu}_{L^{q}(dx)}^{q}\cong \norm{{\rm\bf W}_{\alpha p,\,
\frac{q}{q-p+1}}^{\infty}\mu}_{L^{1}(d\mu)}\cong \norm{{\rm\bf I}_{\alpha p}\mu}_{L^{\frac{q}{p-1}}(dx)}^{\frac{q}{p-1}},$$
where the constants in these equivalences are independent 
of $\mu$.
\end{theorem}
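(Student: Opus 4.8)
The plan is to mimic the proof of Theorem \ref{W-G}, replacing the bounded-scale Wolff inequality by its $r=\infty$ counterpart, which is where the hypothesis $p<M/\alpha$ enters. First I would establish the discrete equivalence
$$\norm{{\rm\bf W}_{\alpha,\,p}^{\infty}\mu}_{L^{q}(dx)}^{q}\cong \sup_{m\in\ZZ}\, \sum_{Q\in\mathcal{D}_{m}}
\Big[\frac{\mu(Q)}{|Q|^{1-\frac{\alpha p}{M}}}\Big]^{\frac{q}{p-1}}|Q|,$$
i.e., the analogue of Theorem \ref{cont-discrete} with $r=\infty$ and no side-length restriction on $Q$. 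The argument is exactly the one given for Theorem \ref{cont-discrete}: one splits the radial integral $\int_0^\infty$ over dyadic annuli $[\lambda^{-k-1}r,\lambda^{-k}r]$, now summing over all $k\in\ZZ$ rather than finitely many, uses Lemma \ref{DZE} (both \eqref{loweres} and \eqref{upperes} are already stated for arbitrary integers $m$ and make sense in the limit), and then invokes Proposition \ref{propB} together with Remark \ref{Q**} and Remark \ref{overlap} to pass from the tent sums to the plain sums $\sum_Q [\mu(Q)/|Q|^{1-\alpha p/M}]^{q/(p-1)}|Q|$. One subtlety is that when $r=\infty$ there is no longer a ``top scale,'' so the auxiliary estimate \eqref{5.4} is not needed; instead one simply notes that both the lower bound (from \eqref{loweres}) and the upper bound (from \eqref{upperes}) hold with the same supremum over $m$, giving the two-sided estimate directly.

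Next I would record the discrete characterization of the dual Riesz-type quantity,
$$\norm{{\rm\bf W}_{\alpha p,\,\frac{q}{q-p+1}}^{\infty}\mu}_{L^{1}(d\mu)}\cong \sup_{m\in\ZZ}\, \sum_{Q\in\mathcal{D}_{m}}
\Big[\frac{\mu(Q)}{|Q|^{1-\frac{\alpha p}{M}}}\Big]^{\frac{q}{p-1}}|Q|,$$
obtained by the same annular-decomposition argument applied to the potential ${\rm\bf W}_{\alpha p,\,q/(q-p+1)}^{\infty}\mu$ integrated against $d\mu$ (here one uses that $\frac{\alpha p\cdot \frac{q}{q-p+1}}{M}$ produces the exponent $1-\frac{\alpha p}{M}$ on $|Q|$ after the arithmetic, exactly as in the proof of Theorem \ref{W-G}). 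Combining this with the first displayed equivalence yields
$$\norm{{\rm\bf W}_{\alpha,\,p}^{\infty}\mu}_{L^{q}(dx)}^{q}\cong \norm{{\rm\bf W}_{\alpha p,\,\frac{q}{q-p+1}}^{\infty}\mu}_{L^{1}(d\mu)}.$$

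For the remaining equivalence with $\norm{{\rm\bf I}_{\alpha p}\mu}_{L^{q/(p-1)}(dx)}^{q/(p-1)}$, I would appeal to the homogeneous (Riesz) version of Wolff's inequality. This is where the condition $p<M/\alpha$ is essential: it guarantees $\alpha p<M$, so that the Riesz kernel $d_{cc}(x,y)^{-(M-\alpha p)}$ and hence ${\rm\bf I}_{\alpha p}\mu$ are well defined, and it is exactly under this condition that Wolff's inequality
$$\norm{{\rm\bf W}_{\alpha p,\,\frac{q}{q-p+1}}^{\infty}\mu}_{L^{1}(d\mu)}\cong \norm{{\rm\bf I}_{\alpha p}\mu}_{L^{\frac{q}{p-1}}(dx)}^{\frac{q}{p-1}}$$
holds in the Euclidean setting (see \cite{AH}, \cite{Tu}) and, via \cite{CLL}, in the general homogeneous-space setting of Carnot groups. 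Chaining the three equivalences gives the theorem. The main obstacle, and the only place requiring genuine care, is verifying that the annular-decomposition machinery of Theorem \ref{cont-discrete} survives the passage $r\to\infty$ without the top-scale truncation argument \eqref{5.4}; once one checks that the sums over all scales $k\in\ZZ$ converge (or diverge) simultaneously on both sides, the rest is bookkeeping identical to the $r<\infty$ case.
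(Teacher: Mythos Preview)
Your proposal is correct and follows exactly the approach the paper intends: the paper gives no separate proof of Theorem \ref{W-G-R} beyond the word ``Similarly,'' meaning one is to rerun the argument of Theorem \ref{W-G} with $r=\infty$, the Riesz potential ${\rm\bf I}_{\alpha p}$ in place of the Bessel potential ${\rm\bf G}_{\alpha p}$, and the homogeneous Wolff inequality (requiring $\alpha p<M$) in place of the inhomogeneous one. Your observation that the top-scale comparison \eqref{5.4} becomes vacuous when $r=\infty$ is the only point where the transcription is not entirely mechanical, and you have handled it correctly.
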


We are now in a position to prove the first main result of the paper.

\begin{proof}[{\bf Proof of Theorem \ref{Dmain1}}]
It is known that $({\rm i})\Leftrightarrow ({\rm ii})$ at least in the elliptic case, i.e., on $\RR^N$
(see, e.g., \cite{AH}), and  the proof given in \cite{AH} works also on Carnot groups. Next,
by duality and Theorem \ref{W-G} we have
$({\rm i})\Leftrightarrow ({\rm iii})$. Also, observe that $({\rm iii})\Rightarrow ({\rm iv})$ by letting $g=\chi_{B}$ in
\eqref{DMA2}. Moreover, we have the  implication ${\rm (iv)}\Rightarrow {\rm (v)}$ by following the proof given in
\cite[Theorem 2.10]{PV1} in the elliptic case. Thus from  Theorem \ref{SU2} we obtain the last conclusion of the theorem.

Therefore, it is left to show that the existence of a solution $u$ to \eqref{DMA1} implies (i). To this end, we let
$\mu=u^q+\om$ and $\delta(x)={\rm dist}(x,\partial\Om)$.
From the lower Wolff's potential estimate in Theorem \ref{potential} we have
$$u(x)\geq C\,\mathbf{W}^{\frac{\delta(x)}{3}}_{1,\, p}\mu(x),\qquad\forall x\in\Om.$$

By Lemma \ref{DZE} we obtain for any $m\in\ZZ$,
$$\Big\{\sum_{\substack{Q\in\mathcal{D}_{m}, \\ \ell(Q)\leq\lambda^{-3}\frac{\delta(x)}{3}}}
\Big[\frac{\mu(Q)}{|Q|^{1-\frac{p}{M}}}\Big]^{\frac{1}{p-1}}\chi_{Q}(x)\Big \}^{q} \chi_{\Om}(x)dx\leq C\, d\mu,$$
and thus
\begin{eqnarray*}
&&\int_{\Om}\Big \{\sum_{\substack{Q\in\mathcal{D}_{m},\\ \ell(Q)\leq\lambda^{-3}\frac{\delta(x)}{3}}}
\Big[\frac{\mu(Q)}{|Q|^{1-\frac{p}{M}}}\Big]^{\frac{1}{p-1}}\chi_{Q}(x)\Big \}^{q}
\Big({\rm\bf M}^{{\rm dy},\, \mathcal{D}_{m}}_{\mu}g\Big)^{\frac{q}{p-1}}dx\\
&&\leq C \int_{\mathbb G}\Big({\rm\bf M}^{{\rm dy},\, \mathcal{D}_{m}}_{\mu}g\Big)^{\frac{q}{p-1}}d\mu,
\end{eqnarray*}
which holds for all $g\in L^{\frac{q}{p-1}}_{\mu}$.
Here ${\rm\bf M}^{{\rm dy},\, \mathcal{D}_{m}}_{\mu}$ denotes the dyadic Hardy-Littlewood maximal function
defined for a locally $\mu$-integrable function $f$ by
$${\rm\bf M}^{{\rm dy},\, \mathcal{D}_{m}}_{\mu}f(x)=\sup_{x\in Q\in \mathcal{D}_{m}}\frac{\int_{Q}|f|d\mu}{\mu(Q)}.$$

Next, since ${\rm\bf M}^{{\rm dy},\, \mathcal{D}_{m}}_{\mu}$ is bounded on $L^{s}_{\mu}$, $s>1$,  we get
\begin{equation}\label{dix}
\int_{\Om}\Big\{\sum_{\substack{Q\in\mathcal{D}_{m},\\\ell(Q)\leq\lambda^{-3}\frac{\delta(x)}{3}}}
\Big[\frac{\int_{Q}g d\mu}{|Q|^{1-\frac{p}{M}}}\Big]^{\frac{1}{p-1}}\chi_{Q}(x)\Big \}^{q}dx
\leq C \int_{\mathbb{G}}g^{\frac{q}{p-1}}d\mu
\end{equation}
for all $g\in L^{\frac{q}{p-1}}_{\mu}$, $g\geq 0$. We now  let
$r_{0}={\rm dist}({\rm supp}(\om),\partial\Omega)$ and $\Om'=\{x\in\Om: {\rm dist}(x, {\rm supp(\om)})<r_0/2\}$, where the distance is taken with respect to the Carnot-Carath\'eodory metric. Note that for $x\in \Om$ with $\delta(x)\leq r_0/4$ we have $Q\cap\Om'=\emptyset$ whenever $x\in Q\in \mathcal{D}_{m}$ and 
$\ell(Q)\leq\lambda^{-3}\frac{\delta(x)}{3}$. Inequality \eqref{dix} then implies that for all
 $g\in L_{\mu}^{\frac{q}{p-1}}$, $g\geq 0$  we have
\begin{equation*}
\int_{\mathbb{G}}\Big\{\sum_{\substack{Q\in\mathcal{D}_{m},\\\ell(Q)
\leq\lambda^{-3}\frac{r_{0}}{12}}}
\Big[\frac{\int_{Q}g d\mu}{|Q|^{1-\frac{p}{M}}}\Big]^{\frac{1}{p-1}}\chi_{Q}(x)\Big\}^{q}dx
\leq C \int_{\mathbb{G}}g^{\frac{q}{p-1}}d\mu
\end{equation*}
provided  ${\rm supp}(g)\subset\Om'$. 
Thus for $k\in\ZZ$ such that $\lambda^{-4}\frac{r_{0}}{12}\leq \lambda^{k}\leq \lambda^{-3}\frac{r_{0}}{12}$ we find
\begin{equation*}
\sum_{j:\, E^{k}_{j}\in \mathcal{D}_{m}}\int_{E^{k}_{j}}\Big\{\sum_{\substack{Q\in\mathcal{D}_{m},\\Q\subset
E^{k}_{j}}}
\Big[\frac{\int_{Q}g d\mu}{|Q|^{1-\frac{p}{M}}}\Big]^{\frac{1}{p-1}}\chi_{Q}(x)\Big\}^{q}dx
\leq C \int_{\mathbb{G}}g^{\frac{q}{p-1}}d\mu,
\end{equation*}
and hence by Proposition  \ref{propB},
\begin{equation}\label{dismu-mu}
\sum_{\substack{Q\in\mathcal{D}_{m},\\\ell(Q)\leq \lambda^{-3}\frac{r_{0}}{12}}}
\Big[\frac{\int_{Q}gd\mu}{|Q|^{1-\frac{p}{M}}}\Big]^{\frac{q}{p-1}}|Q|\leq C \int_{\mathbb{G}}g^{\frac{q}{p-1}}d\mu 
\end{equation}
provided ${\rm supp}(g)\subset \Om'$. As \eqref{dismu-mu} holds for all $m\in \mathbb Z$, it  follows from Theorems \ref{cont-discrete} and \ref{W-G} that
\begin{equation}\label{mu-mu}
\int_{\mathbb{G}}\left[\mathbf{G}_{p}(gd\mu)\right]^{\frac{q}{p-1}}dx \leq C \int_{\mathbb{G}}g^{\frac{q}{p-1}}d\mu
\end{equation}
for all $g\in L^{\frac{q}{p-1}}(d \mu)$, $g\geq 0$, and ${\rm supp}(g)\subset \Om'$. Inequality \eqref{mu-mu},  duality, and the facts that
$\om\leq \mu$ and ${\rm supp}(\om)\subset\Om'$ finally yield
$$\int_{\mathbb{G}}\mathbf{G}_{p}(f)^{\frac{q}{q-p+1}}d\om \leq C \int_{\mathbb{G}}f^{\frac{q}{q-p+1}}dx$$
for all $f\in L^{\frac{q}{q-p+1}}$, $f\geq 0$. This completes the proof of the theorem.
\end{proof}
 We next prove Theorem \ref{removeforp}.

\begin{proof}[{\bf Proof of Theorem \ref{removeforp}}] We first  suppose that  ${C}_{p, \, \frac{q}{q-p+1}}(E)=0$.
Since $\frac{pq}{(q-p+1)}>p$ by Theorems 4.1 and 4.9 in 
\cite{Lu} we find ${C}_{1, \, p}(E)=0$. On the other hand, by a result in \cite{Fol} we have the identification
$$S^{1,\, p}(\mathbb G)={\rm\bf G}_{1}(L^{p}(\mathbb G))$$ 
with  $\norm{u}_{S^{1,\, p}(\mathbb G)}\cong \norm{f}_{L^{p}(\mathbb G)}$
for any $u$ with $u={\rm\bf G}_{1}(f)$. Thus we also have 
\begin{equation}\label{capOm}
{\rm cap}_{1,\, p}(E, \Om)=0,
\end{equation}
where ${\rm cap}_{1,\, p}(\cdot, \Om)$ is a relative capacity adapted to $\Om$ (see \cite{TW}, \cite{HKM}) defined by
$${\rm cap}_{1,\, p}(E, \Om)=\inf\left\{ \int_{\Om} |X\varphi|^p dx: \varphi\in C^{\infty}_{0}(\Om), \varphi\geq \chi_E \right\}.$$

Let $u$ be a solution of (\ref{omE}). Using \eqref{capOm} and adapting the argument in \cite{HKM} to this setting we see that the
function
\begin{equation}
\tilde{u}(x):=\left\{\begin{array}{c}
u(x),\quad x\in\Om\setminus E,\\
\displaystyle{\liminf_{\substack{y\rightarrow x,\,y\not\in E}}} \, u(y),\quad x\in E
\end{array}
\right.
\end{equation}
is a $p$-superharmonic extension of $u$ to the whole $\Omega$. We now let $\varphi$ be an arbitrary nonnegative function in $C_{0}^{\infty}(\Om)$.
As in \cite[Lemme 2.2]{BP}, we can construct a sequence $\{\varphi_{n}\}$ of nonnegative
functions in $C_{0}^{\infty}(\Om\setminus E)$ such that
\begin{equation}0\leq \varphi_{n}\leq \varphi; \qquad
\varphi_{n}\rightarrow\varphi \quad {\rm almost ~everywhere}.
\end{equation}

By Fatou's lemma we have
\begin{eqnarray*}
\int_{\Om} \tilde{u}^{q} \, \varphi \, dx &\leq& \liminf_{n\rightarrow\infty}\int_{\Om} \tilde{u}^{q} \,
\varphi_{n} \, dx = \liminf_{n\rightarrow\infty}\int_{\Om}\varphi_{n} \, d\mu[\tilde{u}]\\
&\leq& \int_{\Om} \, \varphi \, d\mu[\tilde{u}] <+\infty.
\end{eqnarray*}
Here $\mu[\tilde{u}]$ is the measure generated by $\tilde{u}$. Therefore, $\tilde{u}\in L^{q}_{\rm loc}(\Om)$, and $\mu[\tilde{u}]\geq \tilde{u}^{q}$ in
$\mathcal{D}'(\Om)$. It is then easy to see that
$$-\Delta_{\mathbb G,\, p}\, \tilde{u}=\tilde{u}^{q} +\mu \quad {\rm in} \quad \mathcal{D}'(\Om)$$
for some nonnegative measure $\mu$ supported on $E$. Moreover, by Theorem \ref{Dmain1} we have
$$\mu(E)\leq C(E) \, {C}_{p,\,\frac{q}{q-p+1}}(E)=0.$$

This gives  $\mu=0$ and thus $\tilde{u}$ solves (\ref{omE}) with $\Om$ in place of $\Om\setminus E$.

Conversely, suppose that every solution to \eqref{omE} can be extended to the whole $\Om$. We will show  ${C}_{p, \, \frac{q}{q-p+1}}(E)=0$
by a contradiction argument.  To this end, suppose that ${C}_{p,\,\frac{q}{q-p+1}}(E)>0$ and consider  the following equation 
\begin{eqnarray}\label{epeq}
\left\{\begin{array}{rcl}
-\Delta_{\mathbb G,\, p}\, u&=& u^q + \epsilon \,
\mu^E \quad {\rm in}~ \Om,\\
u&=&0\quad {\rm on}~ \partial\Om,
\end{array}
\right.
\end{eqnarray}
where $\mu^E$ is the capacitary measure of $E$ with respect to the capacity ${C}_{p,\, \frac{q}{q-p+1}}(\cdot)$
(see \cite{Lu}, \cite{AH}). Note that $\mu^E$ has the following important property (see,  \cite{Lu}, \cite{AH}):
\begin{equation}\label{nonV}
{\mathbf G}_{p}*[{\mathbf G}_{p}*\mu^E]^{\frac{q-p+1}{p-1}} \leq 1\quad {\rm everywhere~on~} {\rm supp}(E).
\end{equation}
Let $K$ be an arbitrary compact subset of $E$ and denote by $\mu_{K}$ the restriction of $\mu^E$ to $K$. We have  
\begin{eqnarray}\label{low}
\int_{\mathbb G}[{\mathbf G}_{p}*\mu_K]^{\frac{q}{p-1}} dx&=&\int_{K}{\mathbf G}_{p}*[{\mathbf G}_{p}*\mu_K]^{\frac{q-p+1}{p-1}}d\mu_K\\
&\leq&\int_{K}{\mathbf G}_{p}*[{\mathbf G}_{p}*\mu^E]^{\frac{q-p+1}{p-1}}  d \mu_{K}\nonumber\\
&\leq &\mu_{K}(K),\nonumber
\end{eqnarray}
where we used \eqref{nonV} in the last inequality.
On the other hand, it follows from the dual definition of capacity, see \eqref{dualB},  that 
\begin{equation}\label{up}
\mu_{K}(K)\leq {C}_{p, \, \frac{q}{q-p+1}}(K)^{\frac{q-p+1}{q}}\norm{{\mathbf G}_{p}(\mu_K)}_{\frac{q}{p-1}(\mathbb G)}.
\end{equation}

 Thus we obtain from \eqref{low} and \eqref{up} that
$$\mu^{E}(K)=\mu_{K}(K)\leq {C}_{p, \, \frac{q}{q-p+1}}(K).$$

Since this holds for all compact sets $K$ by Theorem \ref{Dmain1} we see that the equation \eqref{epeq} is solvable as long as $\epsilon>0$ is chosen small enough.   
But this would give us   a contradiction and hence the proof is complete.
\end{proof}

\section{ Global solutions and Liouville type theorems}\label{secv}
In this section we sketch the proof of Theorem \ref{Gmain1} and Corollary \ref{LiouvilleT}. 
To prove Theorem \ref{Gmain1} one can proceed as in the proof of Theorem \ref{Dmain1} but using Theorem \ref{W-G-R} instead of Theorem \ref{W-G},
and the following global version of Theorem \ref{SU2}. The latest  in turn can be proved as in \cite[Theorem 5.3]{PV1} by approximations and using pointwise estimates for potential theoretic solutions over arbitrarily large balls.

\begin{theorem}%\label{sufficiency}
Let $\om \in \mathcal{M}^{+}(\mathbb G)$, $1<p<M$, and $q>p-1$. Suppose that
\begin{equation*}
{\rm\bf W}^{\infty}_{1,\,p}[({\rm\bf W}^{\infty}_{1,\,p}\om)^{q}]\leq C\,{\rm\bf W}^{\infty}_{1,\,p}\om<+\infty \quad {\rm a.e.,}
\end{equation*}
 where
\begin{equation*}
C \leq \Big(\frac{q-p+1}{q A\max\{1,2^{p'-2}\}}\Big)^{q(p'-1)}\Big(\frac{p-1}{q-p+1}\Big),
\end{equation*}
and $A$ is the constant used in Definition  \ref{DEFEQ}.
 Then there exists a $p$-superhar-monic function $u\in L^{q}_{\rm loc}(\RR^{n})$ such that
\begin{eqnarray*}
\left\{\begin{array}{rcl}
-\Delta_{\mathbb G,\, p}\, u&=&u^{q}+\om,\\
\inf_{\mathbb G}u&=&0,
\end{array}
\right.
\end{eqnarray*}
and for every $x\in \mathbb G$,
$$c_{1}{\rm\bf W}^{\infty}_{1,\,p}\om(x)\leq u(x)\leq c_{2} \,{\rm\bf W}^{\infty}_{1,\,p}\om(x),$$
where the constants $c_{1}, c_{2}$ depend only $p, q$, and $M$.
\end{theorem}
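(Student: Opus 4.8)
The plan is to construct the desired solution by the same Picard-type iteration scheme used in the proof of Theorem~\ref{SU2}, replacing the truncated Wolff potential $\mathbf{W}^{2R}_{1,\,p}$ by the non-truncated potential $\mathbf{W}^{\infty}_{1,\,p}$ and replacing the role of Theorem~\ref{EXIST} by its global counterpart obtained from pointwise estimates over arbitrarily large balls. Concretely, for a large radius $T$ let $u^{(1)}_{T}$ solve the Dirichlet problem $-\Delta_{\mathbb G,\,p}\,u = \om$ on $B_{T}$ with zero boundary data (via the mollification argument of Theorem~\ref{EXIST}), and then inductively define $u^{(k)}_{T}$ to solve $-\Delta_{\mathbb G,\,p}\,u^{(k)}_{T} = [u^{(k-1)}_{T}]^{q} + \om$ on $B_{T}$ with zero boundary data. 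As in Theorem~\ref{SU2}, the sequence $\{u^{(k)}_{T}\}_{k}$ is pointwise increasing (by comparison on $B_T$), so the key point is a uniform-in-$k$ (and uniform-in-$T$) pointwise majorization.

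\textbf{Key steps.} First I would record the pointwise bounds coming from Theorem~\ref{potential}: on $B_T$, since $\inf_{B_T}u^{(k)}_T \to 0$ as $T\to\infty$ after passing to the limit, one gets $u^{(1)}_{T}\le A'\,\mathbf{W}^{\infty}_{1,\,p}\om$ and $u^{(k)}_{T}\le A'\,\mathbf{W}^{\infty}_{1,\,p}([u^{(k-1)}_{T}]^{q}+\om)$, using that the lower-order term $C_{2}\inf_{B_{r}(x)}u$ can be absorbed by letting the ball grow (this is where ``pointwise estimates over arbitrarily large balls'' enters). Second, using the elementary inequality $\mathbf{W}^{\infty}_{1,\,p}(\mu+\nu)\le \max\{1,2^{p'-2}\}(\mathbf{W}^{\infty}_{1,\,p}\mu + \mathbf{W}^{\infty}_{1,\,p}\nu)$ together with the hypothesis $\mathbf{W}^{\infty}_{1,\,p}[(\mathbf{W}^{\infty}_{1,\,p}\om)^{q}]\le C\,\mathbf{W}^{\infty}_{1,\,p}\om$, I would run exactly the scalar recursion $c_{k}=A\max\{1,2^{p'-2}\}(c_{k-1}^{q(p'-1)}C+1)$ from the proof of Theorem~\ref{SU2} and conclude $u^{(k)}_{T}\le \frac{A\max\{1,2^{p'-2}\}q}{q-p+1}\,\mathbf{W}^{\infty}_{1,\,p}\om =: \kappa_2\,\mathbf{W}^{\infty}_{1,\,p}\om$ for all $k$ and all $T$, provided $C$ satisfies the stated smallness condition. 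Third, for fixed $T$ let $k\to\infty$: the increasing limit $u_{T}$ is $p$-superharmonic on $B_T$ by monotone-limit stability, satisfies $-\Delta_{\mathbb G,\,p}\,u_{T}=u_{T}^{q}+\om$ on $B_T$ by the weak continuity Theorem~\ref{weakcont} (note $u_{T}^{q}\le \kappa_2^{q}(\mathbf{W}^{\infty}_{1,\,p}\om)^{q}\in L^{1}_{\rm loc}$ by Theorem~\ref{W-G-R}, so dominated convergence applies to the source term), and $u_{T}\le \kappa_2\,\mathbf{W}^{\infty}_{1,\,p}\om$. Fourth, let $T\to\infty$: the family $\{u_{T}\}$ is nondecreasing in $T$ (comparison again) and uniformly bounded by $\kappa_2\,\mathbf{W}^{\infty}_{1,\,p}\om$, so $u=\lim_{T\to\infty}u_{T}$ exists, is $p$-superharmonic on all of $\mathbb G$, solves $-\Delta_{\mathbb G,\,p}\,u = u^{q}+\om$ on $\mathbb G$ by Theorem~\ref{weakcont}, and obeys the upper bound $u\le\kappa_2\,\mathbf{W}^{\infty}_{1,\,p}\om$. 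The lower bound $u\ge c_{1}\,\mathbf{W}^{\infty}_{1,\,p}\om$ and the normalization $\inf_{\mathbb G}u=0$ then follow directly from \eqref{IKM-G} in Theorem~\ref{potential} applied to $\mu=-\Delta_{\mathbb G,\,p}\,u\ge\om$; in particular $u$ is finite a.e. and $u\in L^{q}_{\rm loc}(\mathbb G)$.

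\textbf{Main obstacle.} The principal difficulty is the double limit and the bookkeeping of constants: one must verify that the lower-order term $C_{2}\inf_{B_{r}(x)}u^{(k)}_{T}$ in the Wolff potential estimate \eqref{IKM} really does become negligible as the auxiliary balls grow, uniformly in $k$ and $T$, so that the clean inequality $u^{(k)}_{T}\le A'\,\mathbf{W}^{\infty}_{1,\,p}([u^{(k-1)}_{T}]^{q}+\om)$ holds with a universal constant $A'$ — this requires a Harnack-type or integrability bound for $u^{(k)}_{T}$ on large balls, analogous to estimates \eqref{GRAD}--\eqref{NNES} in the proof of Theorem~\ref{EXIST}, combined with $\mathbf{W}^{\infty}_{1,\,p}\om<\infty$ a.e. Once this uniform absorption is in hand, the rest is the monotone iteration machinery already developed for Theorem~\ref{SU2}, transplanted verbatim to the global setting, exactly as in \cite[Theorem 5.3]{PV1}.
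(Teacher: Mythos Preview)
Your proposal is correct and follows essentially the same route the paper indicates (iteration on large balls, uniform pointwise bounds via Wolff potentials, double limit $k\to\infty$ then $T\to\infty$, weak continuity to identify the limiting equation, and the lower bound from the global Kilpel\"ainen--Mal\'y estimate). One simplification: the ``main obstacle'' you flag is not actually an obstacle. If you construct each $u^{(k)}_{T}$ as a \emph{potential theoretic} solution on $B_{T}$ via Theorem~\ref{EXIST}, then the bound \eqref{Kconst} of Definition~\ref{DEFEQ} gives directly $u^{(k)}_{T}\le A\,\mathbf{W}^{4T}_{1,\,p}\bigl([u^{(k-1)}_{T}]^{q}+\om\bigr)\le A\,\mathbf{W}^{\infty}_{1,\,p}\bigl([u^{(k-1)}_{T}]^{q}+\om\bigr)$ with the \emph{same} universal constant $A$; the absorption of the $C_{2}\inf$ term was already carried out once and for all in the proof of Theorem~\ref{EXIST} (via the auxiliary solution $v_n$ on the doubled ball and \eqref{NNES}), so no separate uniform-in-$k,T$ argument is needed here.
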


We remark that  in order to show that the existence of a solution $u$ to 
\eqref{global} implies (i) and (ii)  in  Theorem \ref{Gmain1} we need the following analogue of \eqref{mu-mu}: 
\begin{equation*}
\int_{\mathbb{G}}\left[\mathbf{I}_{p}(gd\mu)\right]^{\frac{q}{p-1}}dx \leq C \int_{\mathbb{G}}g^{\frac{q}{p-1}}d\mu,
\end{equation*}
where  $\mu=u^q + \om$. This can be shown to hold for all $g\in L^\frac{q}{p-1}(d\mu)$, $g\geq 0$, with no restriction on the support of $g$ 
by using the lower bound in \eqref{IKM-G}.

Finally, we give a proof of Corollary \ref{LiouvilleT}. 

\begin{proof}[{\bf Proof of Corollary \ref{LiouvilleT}}]
Corollary \ref{LiouvilleT} follows from Theorem \ref{Gmain1} and the fact that for  $\alpha s\geq M$
the Riesz capacity $\dot{{C}}_{\alpha, \, s}(E)=0$ for every compact set $E\subset\mathbb G$. To see the later note that
for any nonnegative measure $\mu$ supported in a ball $B_R(e)$, $R>0$, we have 
$${\rm \bf I}_{\alpha}*\mu(x)=\int_{B_{R}(e)}\frac{1}{d_{cc}(x, y)^{M-\alpha}}d\mu(y)\geq \frac{c\, \mu(B_{R}(e))}{(|x| +R)^{M-\alpha}},$$
where $|x|$ is the homogeneous norm of $x$ (see Sect. \ref{Cgroup}). Thus using the condition $\alpha s\geq M$ and \cite[Corollary 1.6]{Fol} we get 
$\norm{{\rm \bf I}_{\alpha}*\mu}_{L^{\frac{s}{s-1}}(\mathbb G)}=\infty$
unless $\mu$ is identically zero. Therefore, in view of the dual definition of capacity, see \eqref{dualR}, we obtain 
$$\dot{{C}}_{\alpha, \, s}(E)=0$$ 
for every compact set $E\subset\mathbb G$.
\end{proof}

\end{document}